\title{Dehn Functions of Finitely Presented Metabelian Groups}
\author{Wenhao Wang}
\address{Department of Mathematics\\
  Vanderbilt University\\
 Nashville, TN 37240}
\email[W.~Wang]{wenhao.wang@vanderbilt.edu}
\newtheorem{theorem}{Theorem}[section]
\newtheorem{lemma}[theorem]{Lemma}
\newtheorem{corollary}[theorem]{Corollary}
\theoremstyle{definition}
\newtheorem{proposition}[theorem]{Proposition}
\newtheorem{definition}[theorem]{Definition}
\newtheorem{ques}[theorem]{Question}
\theoremstyle{remark}
\newtheorem*{rems}{Remark}
\newcommand{\llangle}{\langle \langle}
\newcommand{\rrangle}{\rangle \rangle}
\DeclareMathOperator{\area}{Area}
\DeclareMathOperator{\supp}{supp}
\DeclareMathOperator{\tor}{tor}
\DeclareMathOperator{\Tr}{Tr}
\DeclareMathOperator{\sgn}{sgn}
\begin{document}
\maketitle

\begin{abstract}
  In this paper, we compute an upper bound for the Dehn function of a finitely presented metabelian group. In addition, we prove that the same upper bound works for the relative Dehn function of a finitely generated metabelian group. We also show that every wreath product of a free abelian group of finite rank with a finitely generated abelian group can be embedded into a metabelian group with exponential Dehn function.
\end{abstract}

\section{Introduction}
\label{intro}

The class of groups we are interested in is the class of finitely presented metabelian groups. Recall that a group is \emph{metabelian} if its derived subgroup is abelian and a group is \emph{finitely presented} if it is generated by a finite set subject to finitely many defining relations. For finitely presented groups, there is a geometric and combinatorial invariant called the Dehn function. The Dehn function of a finite presented group $G=\langle X\mid R\rangle$ is defined to be 
\[\delta(n)=\sup_{|w|\leqslant n}\inf\{k\mid w=\prod_{i=1}^k r_i^{f_i} \text{ where } r_i\in R\cup R^{-1}, f_i\in G\}.\]

It was introduced by computer scientists Madlener and Otto to describe the complexity of the word problem of a group \cite{MadlenerOtto1985}, and also by Gromov as a geometric invariant of finitely presented groups \cite{gromov1996geometric}. There are a lot of significant results about Dehn functions in the past 30 years, revealing the relationship between this geometric invariant and algebraic properties. For instance, a finite generated group is hyperbolic if and only if it has sub-quadratic Dehn function \cite{gromov1996geometric}, \cite{yu1991hyperbolicity}. It is also well-known that a finitely presented group has a decidable word problem if and only if its Dehn function is bounded above by a recursive function \cite{MadlenerOtto1985}. The word problem for any finitely generated metabelian group is decidable, which follows from the fact that finitely generated metabelian groups are residually finite. Hence the Dehn function of a finitely presented metabelian group is always bounded above by a recursive function.

In this paper, we are most interested in the asymptotic behavior of Dehn functions rather than the explicit Dehn function for a particular presentation of a group. In order to compare different functions asymptotically, we define the following relation: for $f,g:\mathbb N\to \mathbb N$, we write $f\preccurlyeq g$ if  there exists $C>0$ such that for all $n$,
\[f(n)\leqslant Cg(Cn)+Cn+C.\]
And we say $f\approx g$ if $f\preccurlyeq g$ and $g \preccurlyeq f$. One can verify that $\approx$ is an equivalence relation. This relation has many nice properties. One of which is that it distinguishes polynomials of different degrees while all polynomials of the same degree are equivalent. Moreover, for exponential functions, we have $a^n\approx b^n$ for $a,b>1$ and $n^m\prec a^n$ for $m>0,a>1$. Despite the dependence of Dehn function on finite presentations of a group, all Dehn functions of the same finitely presented group are equivalent under $\approx$ \cite{gromov1996geometric}.

In this paper, we show that there exists a universal upper bound for Dehn functions of all finitely presented metabelian groups (up to equivalence). For example, one choice of such upper bound is the doubly exponential function $2^{2^{n}}$. 

First, let us give some examples of Dehn functions of finitely presented metabelian groups. 
\begin{enumerate}[(1)]
	\item The first class of examples is the class of metabelian Baumslag-Solitar groups $BS(1,n), n\geqslant 2$, which has the presentation
\[BS(1,n)=\langle a,t\mid tat^{-1}=a^n\rangle,\]
for any $n\geqslant 2$. It is well-known that metabelian Baumslag-Solitar groups have exponential Dehn function up to equivalence. The proof can be found in many places, for example, \cite{GrovesHermiller2001}, \cite{ArzhantsecaOsin2002}.
	\item Another class of important examples is called Baumslag's groups which were first introduced by Baumslag in 1972 \cite{baumslag1972finitely}, 
 \[\Gamma=\langle a,s,t \mid [a,a^t]=1,[s,t]=1, a^s=aa^t \rangle \text{ and } \Gamma_m=\langle \Gamma\mid a^m=1\rangle.\] 
 Note that $\Gamma$ contains a copy of $\mathbb Z\wr \mathbb Z$ while $\Gamma_m$ contains a copy of the Lamplighter group $\mathbb Z_m\wr \mathbb Z$. Recall the \emph{wreath product} $A\wr T$ is defined to be the semi-direct product of $\oplus_{t\in T}A^t$ by $T$ with the conjugation action. $\Gamma$ is the first example of a finitely presented group with an abelian normal subgroup of infinite rank. M. Kassabov and T. R. Riley \cite{KassabovRiley2010} showed that $\Gamma$ has an exponential Dehn function while the Dehn function of $\Gamma_m$ is at most $n^4$. In particular, Y. de Cornulier and R. Tessera \cite{CornulierYvesTessera2010} showed that $\Gamma_p$ has a quadratic Dehn function when $p$ is a prime number.
  \item The third example consists of groups that are a semidirect product of a finitely generated free abelian group and cyclic group, namely, $\mathbb Z^n\rtimes \mathbb Z$. Bridson and Gersten have shown that the Dehn function of such groups are either polynomial or exponential depending on the action of $\mathbb Z$ on $\mathbb Z^n$ \cite{bridson1996optimal}.
 \item Lattices in $\mathbb R^n\rtimes_\alpha \mathbb R^{n-1},n\geqslant 3$, have quadratic Dehn function \cite{gromov1996geometric}, where $\alpha:\mathbb R^{n-1}\to GL(n,\mathbb R)$ is an injective homomorphism whose image consists of all diagonal matrices with diagonal entries $(e^{t_1},e^{t_2},\dots,e^{t_n})$ verifying $t_1+t_2+\dots+t_n=0$. Drutu extends the result for the case that $a_1t_1+a_2t_2+\dots+a_nt_n=0$ for any fixed vector $(a_1,a_2,\dots,a_n)$ with at least three nonzero components \cite{dructu2004filling}.  
 \item Let 
	 \[G=\langle a,b,t\mid [a,b]=1,a^t=ab,b^t=ab^2\rangle.\]
	 $G$ is metabelian and polycyclic and it is also the fundamental group of a closed, orientable fibred 3-manifold. It has been shown that $G$ has exponential Dehn function \cite{baumslag1993isoperimetric}. The lower bound can be proved using the structure of the second homology of $G$. 
\end{enumerate}

The Dehn functions of those examples are not very large. In fact, all known cases of finitely presented metabelian groups have at most exponential Dehn functions. It is natural to ask 

\begin{ques}
	Is the Dehn function of any finitely presented metabelian group bounded above by the exponential function?
\end{ques}

The upper bound we obtained in this paper is slightly bigger than the exponential functions. It remains unknown if there exists a metabelian group with a Dehn function that exceeds $2^n$. We will talk about all the obstructions to seeking such groups in the last section. In this paper, we in fact show that it costs at most exponential many relations, with respect to the length of the word, commutting two commutators of a finitely presented metabelian group (See in \cref{metabelianness}). From those pieces of evidence, we find that the only hope for seeking such a group is to find a complex enough membership problem for a submodule in a free module over a group ring (See in \cref{examples2}). 

For other varieties of solvable groups, the question has been studied extensively. It is not hard to show that Dehn functions of finitely generated abelian groups are asymptotically bounded above by $n^2$. For varieties of solvable groups of derived length three or higher, we have

\begin{theorem}[O. Kharlampovich, A. Myasnikov, M. Sapir\cite{KharlampovichMiasnikovSapir2012}]
For every recursive function $f$, there is a residually finite finitely presented solvable group $G$ of derived length 3 with Dehn function greater than $f$.
\end{theorem}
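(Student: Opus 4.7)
The plan is to combine three ingredients: (i) a source of finitely presented groups whose Dehn functions majorize an arbitrary prescribed recursive function, (ii) an embedding-style construction that realizes such behavior inside a finitely presented group of derived length three, and (iii) a mechanism that forces the resulting group to be residually finite.

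For (i), I would start from the Sapir--Birget--Rips machinery (or a direct S-machine simulation): given a recursive function $f$, one can build a deterministic Turing machine $M$ whose running time exceeds $f$, and then a finitely presented group $H_M$ whose Dehn function essentially encodes the time complexity of $M$. This gives a finitely presented $H_M$ with $\delta_{H_M}\succ f$. For (ii), the core idea is to mimic Kharlampovich's earlier construction of a finitely presented solvable group of class three with undecidable word problem. That construction is really a general device for simulating Turing computations inside a metabelian-by-abelian group; feeding in the particular $M$ from (i) produces a finitely presented solvable group $G$ of derived length three such that trivial words coming from long computations of $M$ require diagrams of area at least $f(n)$, so $\delta_G\succ f$.

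For (iii), I would arrange the construction in step (ii) so that $G$ embeds into a controlled object built from free solvable groups of class three and their Magnus-type wreath-product embeddings. Free solvable groups of finite rank are residually finite by a theorem of Gruenberg, and P.~Hall's theorem gives residual finiteness of finitely generated metabelian groups; exploiting these to realize $G$ as a subgroup of an iterated semidirect or subdirect product of residually finite pieces yields residual finiteness of $G$ itself without disturbing the lower bound on the Dehn function. Concretely, I would separate any nontrivial element of $G$ from the identity by projecting to a suitable finite quotient of the ambient metabelian-by-abelian structure.

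The carrying out would proceed in the order (i), (ii), (iii), with careful bookkeeping so that the Van Kampen diagrams computing $\delta_G$ directly reflect accepting computations of $M$. The hard part is step (iii): the natural encodings that make the word problem (or Dehn function) complicated tend to destroy residual finiteness, since one typically needs relations that force infinite behavior not visible in finite quotients. Overcoming this requires choosing the embedding in (ii) so that the computational content sits inside the abelian normal closure in a way that is still detected by finite-index subgroups; this is the delicate technical heart of the proof and is what distinguishes the Kharlampovich--Myasnikov--Sapir result from weaker statements that only produce undecidable word problem without residual finiteness.
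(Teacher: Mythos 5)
This theorem is not proved in the paper at all: it is quoted verbatim from Kharlampovich--Myasnikov--Sapir \cite{KharlampovichMiasnikovSapir2012} and used only as background context in the introduction (to contrast the metabelian case, which is the paper's actual subject, against derived length $\geqslant 3$). There is therefore no proof in this paper to compare your proposal against, and the author does not intend the reader to reprove it here.

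That said, your sketch is a reasonable top-level description of the strategy the cited paper actually follows: encode a suitably slow Turing machine (or an $S$-machine) into a finitely presented group so that Van Kampen diagrams for trivial words simulate computations, adapt Kharlampovich's class-three construction so the simulation lives inside a solvable group of derived length $3$, and then fight to retain residual finiteness. You are also right that (iii) is where the genuine difficulty lies --- generic simulation constructions (Boone--Novikov, Sapir--Birget--Rips) are nowhere near residually finite, and making the computational content survive in finite quotients is the technical heart of \cite{KharlampovichMiasnikovSapir2012}. But you should be aware that your outline stops well short of a proof: you have not specified the group presentation, the diagram lower-bound argument, or the mechanism separating elements in finite quotients, and these are precisely the places where all the work happens. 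If your goal was to verify a proof appearing in the present paper, the correct answer is that there isn't one; if your goal is to understand the result, you need to work through the cited reference, not this one.
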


Our main result is the following:

\begin{theorem}
	\label{main}
	Let $G$ be a finitely presented metabelian group. Let $k$ be the minimal torsion-free rank of an abelian group $T$ such that there exists an abelian normal subgroup $A$ in $G$ satisfying $G/A\cong T$.
	
	Then the Dehn function of $G$ is asymptotically bounded above by 
	\begin{enumerate}
		\item $n^2$ if $k=0$;
		\item $2^{n^{2k}}$ if $k>0$.
	\end{enumerate}
\end{theorem}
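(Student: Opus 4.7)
For case (1), with $k=0$: the hypothesis forces $T$ to be a finitely generated abelian group of torsion-free rank zero, hence finite, so $A$ has finite index in $G$ and $G$ is virtually abelian; the Dehn function is then $\preccurlyeq n^2$ by the standard quasi-isometry reduction from virtually abelian groups to $\mathbb Z^m$.

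For case (2), the plan is to fill a null-homotopic word in three stages that track the short exact sequence $1\to A\to G\to T\to 1$ with $T=\mathbb Z^k\oplus F$ and $F$ finite. I would fix a finite presentation of $G$ whose generators consist of lifts $t_1,\dots,t_k$ of a $\mathbb Z$-basis of the free part of $T$, lifts of the generators of $F$, and a finite $\mathbb Z[T]$-module generating set $a_1,\dots,a_\ell$ of $A$ (the conjugation action of $T$ on $A$ making $A$ into a module). Given a trivial word $w$ of length $n$, Stage~1 reduces modulo $A$: at area $O(n^2)$ one rewrites $w$ as $u\cdot v$, where $u$ is a word in the $t_i$'s with trivial image in $T$ and $v$ is a product of at most $O(n^2)$ conjugates $a_i^{g_j}$ with each $g_j$ a ``$t$-monomial'' of length $\le Cn$; filling $u$ then costs $O(n^2)$ via the quadratic Dehn function of $T$. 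Stage~2 sorts $v$: since $A$ is abelian the conjugates commute in $G$, but each swap of two $a_i^{g_j}$'s costs area $\preccurlyeq 2^{\mathrm{poly}(n)}$ by the commutator-commuting estimate promised in \cref{metabelianness}. The images of the $g_j$ lie in a cube of radius $Cn$ in $\mathbb Z^k$, which contains $O(n^k)$ lattice points and therefore $O(n^{2k})$ ordered pairs of positions; after sorting, $v$ takes the form $\sum_{g\in B_n} g\cdot \xi_g$ for a vector $\mathbf v\in\mathbb Z[T]^\ell$ that equals zero in $A$.

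Stage~3 expresses $\mathbf v$ as a $\mathbb Z[T]$-combination of the finitely many module relators defining $A$; this is the submodule-membership problem in $\mathbb Z[\mathbb Z^k]^\ell$, and by effective bounds of Hermann/Aschenbrenner type, leveraging the Noetherianity of $\mathbb Z[\mathbb Z^k]$, the combination can be chosen with coefficient supports of size $\preccurlyeq 2^{n^{2k}}$; each module relation then pulls back to a van Kampen diagram of polynomial area. Aggregating all three stages gives total area $\preccurlyeq 2^{n^{2k}}$. The main obstacle is the bookkeeping of Stage~3: translating effective submodule-membership over $\mathbb Z[\mathbb Z^k]$ into honest van Kampen area and pinning the exponent precisely at $n^{2k}$ rather than a larger polynomial in $n$. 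Stage~2 rests entirely on the commutator-commuting estimate referenced in \cref{metabelianness}, and Stage~1 is just the quadratic Dehn function of the abelian quotient.
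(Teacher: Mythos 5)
Your three-stage outline does mirror the architecture of the paper's proof of \cref{main2} (reduce to the ordered form, sort, then solve module membership), but there are several concrete gaps that prevent this from working as stated.

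\textbf{Stage 2 hides the real technical core.} You treat the commutator-commuting estimate from \cref{metabelianness} as an off-the-shelf tool, but that theorem is itself a corollary of \cref{conjugate}, whose proof is where the hard analysis lives. \cref{conjugate} depends essentially on the Bieri--Strebel tameness data: the finite set $\Lambda\subset C(A)\cup C(A^*)$, the geometric \cref{geometric}, and the chain of auxiliary presentations $G_r$ interpolating between a finitely presented approximation and $G_\infty$. Without tameness, one has \emph{no} bound at all on $\area([a,b^u])$ in terms of $\|\theta(u)\|$; finitely presented\emph{ness} of $G$ is exactly what buys you $K^n$. Your proposal never mentions tameness, so the exponential estimate you are leaning on has no foundation.

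\textbf{Stage 3 invokes the wrong kind of effective bound.} Hermann/Aschenbrenner-type results give \emph{degree} bounds for the coefficients $\alpha_i$ in a membership certificate $g=\sum\alpha_i f_i$; they say nothing directly about $\sum_i|\alpha_i|$, the $\ell_1$-size of the certificate, which is what actually controls the van Kampen area (each application of a module relator is one cell, repeated $|\alpha_i|$ times). A polynomial degree bound is compatible with astronomically large integer coefficients. The paper instead builds a bespoke division algorithm through a Gr\"obner basis (\cref{division0}, \cref{division}) in which the chosen well-order on $\mathcal T$ (including the nonstandard order on $\mathbb Z$ placing negatives above positives) makes every reduction step lower the leading monomial, so the number of steps is bounded by $mG_{2k}(n)\approx n^{2k}$, and the length grows geometrically by at most a factor $1+C$ per step. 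That is where $K^{n^{2k}}$ comes from -- and also why the exponent is $2k$ rather than $k$: one works over $\mathbb{Z}[t_1,t_1^{-1},\dots,t_k,t_k^{-1}]$, i.e.\ $2k$ variables, not the $k$ you count in Stage~2. Your ``$O(n^k)$ lattice points'' reasoning undercounts for this reason.

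\textbf{Reassembly after Stage 3 is not free.} Once you have $w_4=\prod f_i^{\alpha_i}$ as an equality of module elements, you still have to realize this as a van Kampen filling: convert each $f_i^{\alpha_i}$ to ordered form and then merge them. Your remark that ``each module relation pulls back to a van Kampen diagram of polynomial area'' understates this. In the paper, the estimates via \cref{thelemma} for conjugating, raising to a power, and adding ordered words all cost up to $K^{O(n)}$ per operation and must be iterated $\sum|\alpha_i|\preccurlyeq K^{n^{2k}}$ times; the book-keeping in the proof of \cref{main2} is precisely this reassembly, and it is not negligible.

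\textbf{The reduction to $T$ free abelian with the same $k$ is unaddressed.} You implicitly pass to $T=\mathbb Z^k\oplus F$ and then to the free part, but one must argue that passing to the finite-index subgroup $G_1=\pi^{-1}(T_1)$ cannot decrease the minimal $k$; the paper does this with a snake-lemma argument at the start of \cref{maintheorem1}. This is a small point but needs to be said, since $k$ appears in the exponent.

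In short: the scaffolding is right, and the $k=0$ case is correct, but Stage 2 is missing its foundation (tameness and \cref{conjugate}), Stage 3 uses a tool that bounds the wrong quantity, and the reassembly cost and the normalization of $T$ are left out.
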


In particular, if we let $T=G_{ab}$, the abelianizition of $G$, and we suppose $T$ is infinite, $\delta_G$ is asymptotically bounded by $2^{n^{2k}}$ where $k$ is the torsion-free rank of the abelianizition $G_{ab}$. The proof of \cref{main} can be found in \cref{maintheorem1}.

This theorem immediately gives us a uniform upper bound for Dehn functions of finitely presented metabelian groups. For example, let $H(n):\mathbb N\to \mathbb N$ be any super-polynomial function, then $2^{H(n)}$ will be an upper bound of the Dehn function of any finitely presented metabelian group. 

Since metabelian groups form a variety, we also discuss the Dehn function relative to the variety of metabelian groups (defined in \cref{relativedehn2}). Since finitely generated metabelian groups satisfy the maximal condition for normal subgroups \cite{Hall1954}, a finitely generated metabelian group is always finitely presentable in the variety of metabelian groups. Therefore, the relative Dehn function exists for any finitely generated metabelian group. The relative Dehn function $\tilde \delta_G(n)$ has a close connection to the complexity of the membership problem of the submodule $G'$ over the group ring of $G/G'$. And we can translate this connection to a connection between relative Dehn functions and Dehn functions. We prove that the Dehn function $\delta_G(n)$ is bounded above by $\max\{\tilde \delta_G^3(n^3),2^n\}$ (\cref{main3}). We also improve a result in \cite{Fuh2000}, as in \cref{improved}, which gives a better estimate of the relative Dehn function in a special case. In addition, we prove that the same upper bound works for the relative case, i.e.,

\begin{theorem}
\label{relativeDehnfuntion}
	Let $G$ be a finitely generated metabelian group. Let $k$ be the minimal torsion-free rank of an abelian group $T$ such that there exists an abelian normal subgroup $A$ in $G$ satisfying $G/A\cong T$.
	
	Then the relative Dehn function of $G$ is asymptotically bounded above by 
	\begin{enumerate}
		\item $n^2$ if $k=0$;
		\item $2^{n^{2k}}$ if $k>0$.
	\end{enumerate}
\end{theorem}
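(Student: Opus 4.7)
The proof will proceed along the same lines as that of Theorem \ref{main}, with the key simplification that in the relative setting the metabelian identity $[[x,y],[z,u]]=1$ is free and does not contribute to the count. The plan is to fix a finite generating set $X$ of $G$, a finite presentation in the variety of metabelian groups, and an abelian normal subgroup $A$ with $G/A\cong T$ realizing the minimum torsion-free rank $k$, so we have a short exact sequence $1\to A\to G\to T\to 1$. Given a word $w$ of length at most $n$ representing the identity, the first step is to collect $w$ into a product $u\cdot v$, where $v$ is a word in generators mapping to $T$ and represents the identity in $T$, while $u$ is a product of conjugates $a_i^{t_i}$ of generators of $A$ by $T$-elements and represents the identity in $A$. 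This collection uses only conjugation past $T$-letters and commutativity inside $A$, both of which follow from metabelian laws and hence are free in the relative setting; the exponents $t_i$ that appear stay in a ball of radius $O(n)$ in $T$.

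The $T$-part $v$ is killed using the relative Dehn function of the finitely generated abelian group $T$, which is at most quadratic, contributing an $O(n^2)$ term. For the $A$-part $u$, I would view $A$ as a finitely generated $\mathbb{Z}T$-module under the conjugation action, so that $u$ corresponds to an element in the kernel of the natural surjection from the free $\mathbb{Z}T$-module on the $A$-generators onto $A$. The cost of reducing $u$ to the identity equals the length of an expression for this kernel element as a $\mathbb{Z}T$-combination of the finitely many module relators coming from the fixed presentation. Since the supports of the intermediate elements lie in the $O(n)$-ball in $T$, which contains $O(n^k)$ lattice points when $T$ has torsion-free rank $k$, and the coefficients are a priori bounded by $2^{O(n)}$, a Noetherian / Bieri--Strebel style argument bounds the number of module relators needed by $2^{O(n^{2k})}$, as in the treatment of the module membership problem used for Theorem \ref{main}.

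The case $k=0$ is separate: when $T$ has torsion-free rank zero, $T$ is finite, so $G$ is virtually abelian, and its relative Dehn function is bounded by $n^2$ by the classical quadratic estimate for virtually abelian groups together with the fact that virtual abelianness is preserved in passing to the metabelian variety. For $k>0$, assembling the $T$-contribution $O(n^2)$, the $A$-contribution $2^{O(n^{2k})}$, and absorbing the torsion of $A$ (which produces only a uniformly bounded number of extra generators in the module $A$ and hence is harmless), one obtains the claimed upper bound $\tilde\delta_G(n)\preccurlyeq 2^{n^{2k}}$.

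The main obstacle is the quantitative module-theoretic membership estimate: one must show that any zero-relation in the $\mathbb{Z}T$-module $A$ whose support lies in a box of side $O(n)$ and whose coefficients have absolute value at most $2^{O(n)}$ can be rewritten as an explicit $\mathbb{Z}T$-combination of the fixed module relators of total length at most $2^{n^{2k}}$. Making this bound uniform in $n$ and in the intermediate elements that arise during the rewriting, and controlling the torsion components of both $A$ and $T$ without inflating the exponent, is the technical heart of the argument; everything else is geometric bookkeeping translating between module relations and the relative area count.
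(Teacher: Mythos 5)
Your plan follows the paper's route closely: reduce to the case where $T$ is free abelian of rank $k$ by passing to a finite-index subgroup (using \cref{finiteIndexForRelative}), bring a trivial word to its ordered form $\prod_i a_i^{\mu_i}$ in the free $\mathbb{Z}T$-module on $\mathcal A$, and bound the module-membership step via the Gr\"obner-basis estimate (\cref{division}). The $k=0$ case handled by virtual abelianness and the assembly of the $O(n^2)$ contribution with the $2^{O(n^{2k})}$ module cost also match the paper's argument.

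One conceptual imprecision is worth correcting: you assert that ``commutativity inside $A$ \ldots follows from metabelian laws and hence is free in the relative setting.'' This is only true when $A=G'$. To realize the minimal $k$ one may be forced to take $A\supsetneq G'$, and then for generators $a,b\in\mathcal A$ not both lying in $G'$, the identity $[a,b^u]=1$ is \emph{not} a consequence of $[[x,y],[z,w]]=1$; it must be paid for out of the finitely many explicit commutativity relations in the presentation. The paper's \cref{relativeCommutative} quantifies exactly this: from the finite set $\mathcal R_2'=\{[a,b]=1,\,[a,b^t]=1\}$ together with metabelian laws one derives $[a,b^u]=1$ at relative cost at most $4|u|-3$ (linear, not zero), and consequently conversion to ordered form costs polynomially many relations (see the proof of \cref{relativeConnection}, where this cost is roughly $n^6$). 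Since $2^{n^{2k}}$ dominates any polynomial, your final bound is unaffected, but as stated your collection step undercounts, and if taken literally the claim that abelianness of $A$ is a metabelian law would be false whenever $A\neq G'$. A second, smaller, remark: your a priori bound of $2^{O(n)}$ on the coefficients is much looser than what actually occurs after collection (the paper gets $\sum_i|\mu_i|\leqslant n^2$, i.e.\ polynomial length), though again this looseness is harmless since \cref{division} only multiplies the exponential by the length.
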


The proof of \cref{relativeDehnfuntion} can be found in \cref{relativedehn3}.

The general method we establish in this paper provides a way to compute the Dehn function of metabelian groups. In the last section, we generalize one result in \cite{KassabovRiley2010}, and show the following.

\begin{theorem}
\label{embeddingWreathproduct}
Every wreath product of a free abelian group of finite rank with a finitely generated abelian group can be embedded into a metabelian group with exponential Dehn function. In particular, any free metabelian group of finite rank is a subgroup of a metabelian group with exponential Dehn function.
\end{theorem}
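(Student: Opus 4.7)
The strategy is to generalize the construction of Baumslag's group $\Gamma$ used by Kassabov--Riley. Given $n\geqslant 1$ and a finitely generated abelian group $A=\mathbb{Z}^m\oplus F$ with $F$ finite, the plan is to build a finitely presented metabelian group $G_{n,A}$ on generators $a_1,\ldots,a_n,s_1,\ldots,s_m,t_1,\ldots,t_m$ (plus finitely many generators and relations to encode the torsion part $F$ on the ``top'' side), subject to Baumslag-style defining relations: the $s_i,t_j$ mutually commute, $[a_i,a_j^{t_k}]=1$ for every $i,j,k$, and a relation of the form $a_i^{s_j}=a_i a_i^{t_j}$ for every $i,j$. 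One then shows, as in the classical case $n=m=1$, that the subgroup generated by $a_1,\ldots,a_n,t_1,\ldots,t_m$ (together with the torsion generators) is isomorphic to $\mathbb{Z}^n\wr A$.

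The first step is to verify the embedding. The defining relations force each $a_i$ to commute with every conjugate $a_j^{w}$ where $w$ is a monomial in the $t_k$'s, since the derived subgroup of $G_{n,A}$ is abelian and so the commutator relations $[a_i,a_j^{t_k}]=1$ propagate to all conjugates. This identifies the subgroup generated by the $a_i$'s and $t_j$'s with a quotient of $\mathbb{Z}^n\wr\mathbb{Z}^m$; to show there are no extra relations I would view the base of the wreath product as a free $\mathbb{Z}[t_1^{\pm1},\ldots,t_m^{\pm1}]$-module of rank $n$ and read off from the Magnus-type projection onto $\mathbb{Z}^n\wr A$ that the map is injective on the base module.

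The main task is to establish an exponential upper bound on $\delta_{G_{n,A}}$. The estimate $2^{n^{2k}}$ from \cref{main} only gives $2^{n^{2m}}$, which is too weak, so the proof must adapt the Kassabov--Riley argument directly. Their key trick is that the relation $a^s=aa^t$ allows the word $a^{2^N}$ to be rewritten as a word of length $O(N)$ by a telescoping procedure, and this yields an exponential-area filling of any relator in $\langle a,s,t\rangle$. In the present setting, each triple $(a_i,s_j,t_j)$ supports the same telescoping, the $a_i$'s generate a free module over the base ring, and the $s_j,t_j$ commute with one another, so the fillings may be carried out coordinate-by-coordinate with only polynomial overhead. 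The technical obstacle, which I expect to be the crux of the proof, is the bookkeeping needed to combine Baumslag-style van Kampen diagrams for several $(a_i,s_j,t_j)$ triples simultaneously and to absorb the finite torsion $F$ without spoiling the asymptotic bound; this will require a careful induction on the ``nesting depth'' of the telescoping, together with an explicit normal form for words in the base module.

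The ``in particular'' statement then follows immediately from the Magnus embedding: the free metabelian group of rank $r$ sits inside $\mathbb{Z}^r\wr\mathbb{Z}^r$, and by the construction above the latter embeds into $G_{r,\mathbb{Z}^r}$, a finitely presented metabelian group of exponential Dehn function.
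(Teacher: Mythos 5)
Your construction is essentially the special case $f_j=1+t_j$ of Baumslag's group $W_F$, which is exactly the group the paper uses, and the embedding $\mathbb{Z}^n\wr A\hookrightarrow W_F$ is Baumslag's 1973 result that the paper cites. So the setup is right. But the actual content of the theorem is that the ambient group has exponential Dehn function, and that is exactly where the proposal stops. For the upper bound you propose to adapt the Kassabov--Riley telescoping/van Kampen surgery and then say that ``the fillings may be carried out coordinate-by-coordinate with only polynomial overhead,'' while at the same time flagging the bookkeeping of several $(a_i,s_j,t_j)$ triples plus torsion as ``the crux of the proof.'' That is a genuine gap: you have named the hard step but not carried it out, and it is not clear that the Kassabov--Riley diagram surgery (already delicate for a single triple) generalizes to several interacting $u_j$-bands without blowing up the area. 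The paper avoids this entirely. It converts a trivial word $w$ to ordered form $a_1^{g_1}\cdots a_r^{g_r}$ at exponential cost using the module-conversion lemmas (\cref{conjugate}, \cref{organizer}, \cref{thelemma}), substitutes $u_j=(u_j-f_j)+f_j$ termwise to split the module element into a part lying in the submodule $S=\langle (u_i-f_i)e_j\rangle$ plus a $\mathbb{Z}T$-part $\rho$, and then uses \cref{submoduble} to conclude $\rho=0$. This produces, by hand, a short expression of $w$ as a product of conjugates of relators, and the area estimate falls out of \cref{thelemma}. No simultaneous-band surgery is needed.

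You also say nothing about the lower bound, which is required for the Dehn function to actually be exponential and not merely subexponential. The paper establishes it by a van Kampen diagram analysis of the word $[a_1^{u_1^n},a_1]$: the $u_1$-bands (\cref{bands}) cannot intersect each other, forcing roughly $n/2$ nested bands whose bottom paths carry the words $a_1^{-f_1^i}$, and \cref{tensor} (a Chebyshev-polynomial argument) shows $|f_1^i|$ grows exponentially, giving an exponential number of cells. If you want to go the Kassabov--Riley route, you would still need a counterpart of this band/growth estimate for your multi-variable group; your proposal would need to supply that before the argument closes.
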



\emph{The structure of this paper.} We will state some preliminary concepts and lemmas in \cref{prelim}. In \cref{membership}, we prove \cref{division} that allows us to solve the membership problem for some special free modules in a reasonable time. Then we shall briefly revisit the proof of ``if" part of a result of Bieri and Strebel \cite{bieri1980valuations} in \cref{finitepresented}, which characterizes finitely presented metabelian groups. In \cref{maintheorem}, we provide the proof of \cref{main}. We will talk about relative Dehn functions and the connections between them and Dehn functions in \cref{relativedehn}. In \cref{examples}, we prove \cref{embeddingWreathproduct} and discuss the obstructions for finding metabelian group with the Dehn function greater than the exponential function.

\emph{Acknowledgement.} I would like to thank my advisor Mark Sapir who encourage me to study this question and instructs me. I also would like to thank Nikolay Romanovskiy who kindly answered my question about an algorithm solving the membership problem in modules.  

\section{Preliminaries}
\label{prelim}
\subsection{Notation}
\label{prelim1}

We denote the set of rational integers by $\mathbb Z$ and the set of real numbers by $\mathbb R$. $\mathbb N$ indicates the set of natural numbers, where our convention is that $0\notin \mathbb N$. In addition, we let $\mathbb R^+=\{x\in \mathbb R\mid x>0\}.$ 

If $G$ is a group, we will denote $G'=[G, G]$ to be the derived (commutator) subgroup, $G_{ab}\cong G/G'$ to be the abelianization. For elements $x,y\in G, n\in \mathbb N$, our conventions are $x^{ny}=y^{-1}x^ny, [x,y]=x^{-1}y^{-1}xy$. We use double bracket $\llangle \cdot \rrangle_G$ to denote the normal closure of a set in the group $G$. Sometimes we omit the subscript when there is no misunderstanding in the context. For a set $\mathcal X$, we denote the free group generated by the set $\mathcal X$ as $F(\mathcal X)$. We also use $F(\mathcal X)$ to represent the set of reduced words in alphabet $\mathcal X\cup \mathcal X^{-1}$. 

In addition, for a group $G$ and a commutative ring $K$ with $1\neq 0$, we let $KG$ be the group ring of $G$ over $K$. An element $\lambda\in KG$ is usually denoted as $\lambda=\sum_{g\in G} \alpha_g g, \alpha_g\in K$ where all but finitely many $\alpha_g$'s are 0. We also regard $\lambda$ as a function $\lambda: G\to K$ with finite support, where $\lambda(g)=\alpha_g$. 

We say a group $G$ is an \emph{extension} of a group $A$ by a group $T$ if $A$ is a normal subgroup of $G$ and $T\cong G/T$. If $A$ is abelian, then $A$ is a module over $\mathbb ZT$ and the action of $T$ on $A$ is given by conjugation. In this case, we also say that $G$ is an extension of a $T$-module $A$ by $T$. 

\subsection{Dehn Function}
\label{prelim2}

Let $G$ be a finitely presented group in the category of all groups, namely $G=\langle X\mid R \rangle$ where $|X|,|R|<\infty$. We denote this presentation by $\mathcal P$. The length of a word $w\in G$ is the length of the corresponding reduced word in $F(X)$. A word $w$ in the alphabet $X\cup X^{-1}$ equals 1 if and only if it lies in the normal closure of $R$, i.e.
\[w=_{F(X)} \prod_{i=1}^k r_i^{f_i} \text{ where } r_i\in R\cup R^{-1}, f_i\in F(X).\]
The smallest possible $k$ is denoted by $\area_\mathcal P(w)$. The \emph{Dehn function} of $G$ with respect to the presentation $\mathcal P$ is defined as 
\[\delta_{\mathcal P}(n)=\sup\{\area_{\mathcal P}(w)\mid |w|_{F(X)}\leqslant n\}.\] 

It is convenient for us to talk about functions up to the equivalence relation $\approx$ which we defined in the introduction. Because the Dehn function is independent of finite presentations of a group in terms of $\approx$. That is,

\begin{proposition}
\label{presentationEquivalence}
Let $\mathcal P,\mathcal P'$ be two finite presentations of $G$, then 
\[\delta_{\mathcal P}(n)\approx \delta_{\mathcal P'}(n).\]
\end{proposition}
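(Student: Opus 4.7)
The plan is to prove symmetry of the two inequalities $\delta_{\mathcal P} \preccurlyeq \delta_{\mathcal P'}$ and $\delta_{\mathcal P'} \preccurlyeq \delta_{\mathcal P}$; by symmetry, it suffices to establish the first. Write $\mathcal P = \langle X \mid R\rangle$ and $\mathcal P' = \langle X' \mid R'\rangle$. Since both finite presentations define the same group $G$, I can fix once and for all a pair of substitution maps $\phi\colon F(X)\to F(X')$ and $\psi\colon F(X')\to F(X)$ such that for every $x\in X$ the word $\phi(x)$ represents $x$ in $G$, and similarly for $\psi$. Because $X$, $X'$, $R$ and $R'$ are all finite, only finitely many auxiliary quantities appear in the argument and they can be absorbed into a single constant $C$ at the end.

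The core of the argument is to translate a $\mathcal P$-van Kampen problem into a $\mathcal P'$-van Kampen problem and back. Given a word $w\in F(X)$ of length $n$ representing $1$ in $G$, I first form $\phi(w)\in F(X')$, whose length is at most $c_1 n$ where $c_1 = \max_{x\in X}|\phi(x)|_{F(X')}$. Since $\phi(w)=_G 1$, it admits a $\mathcal P'$-decomposition $\phi(w) = \prod_{i=1}^{k} (r'_i)^{f'_i}$ in $F(X')$ with $k\leqslant \delta_{\mathcal P'}(c_1 n)$. Applying $\psi$ to both sides yields $\psi(\phi(w)) = \prod_{i=1}^{k}\psi(r'_i)^{\psi(f'_i)}$ in $F(X)$. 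Each $\psi(r'_i)$ is a fixed word in $F(X)$ representing $1$ in $G$ (as $r'_i$ does), so its $\mathcal P$-area is bounded by a constant $c_2 = \max_{r'\in R'} \area_{\mathcal P}(\psi(r'))$. Hence $\area_{\mathcal P}(\psi(\phi(w))) \leqslant c_2\, \delta_{\mathcal P'}(c_1 n)$.

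It remains to compare $w$ with $\psi(\phi(w))$. Writing $w = x_{i_1}^{\epsilon_1}\cdots x_{i_n}^{\epsilon_n}$ and inserting $\psi(\phi(x_{i_j}))^{\epsilon_j}\psi(\phi(x_{i_j}))^{-\epsilon_j}$ between consecutive letters, one expresses $w\cdot \psi(\phi(w))^{-1}$ as a product of $n$ conjugates of the words $x\,\psi(\phi(x))^{-1}$ for $x\in X$; each such word is trivial in $G$ with $\mathcal P$-area at most some constant $c_3$, giving $\area_{\mathcal P}\bigl(w\cdot \psi(\phi(w))^{-1}\bigr)\leqslant c_3 n$. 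Summing,
\[
\area_{\mathcal P}(w) \;\leqslant\; c_3 n + c_2\,\delta_{\mathcal P'}(c_1 n),
\]
and taking $C = \max\{c_1, c_2, c_3\}$ shows $\delta_{\mathcal P}(n) \leqslant C\,\delta_{\mathcal P'}(Cn) + Cn + C$, i.e. $\delta_{\mathcal P}\preccurlyeq \delta_{\mathcal P'}$. The reverse direction is identical with the roles of $\mathcal P$ and $\mathcal P'$ swapped.

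No step is genuinely hard; the only point needing care is bookkeeping for the substitution argument, specifically verifying that applying $\psi$ to a $\mathcal P'$-van Kampen decomposition preserves the factorization structure in $F(X)$ and that the ``telescoping'' between $w$ and $\psi(\phi(w))$ costs only linearly many relators. Both are routine once one fixes the finite sets of auxiliary words $\phi(x)$, $\psi(x')$, $\psi(r')$ and $x\,\psi(\phi(x))^{-1}$ and bounds their lengths and $\mathcal P$-areas by constants depending only on the two finite presentations.
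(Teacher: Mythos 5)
Your proof is correct, but it takes a different route from the paper. The paper does not argue directly: it first states the quasi-isometry invariance of the Dehn function (\cref{quasi}) and then deduces \cref{presentationEquivalence} in one line from the observation that Cayley graphs of $G$ with respect to two finite generating sets are quasi-isometric. You instead give the classical self-contained argument: fix substitution maps $\phi, \psi$ between the two free groups, push a $\mathcal P'$-decomposition of $\phi(w)$ through $\psi$ to bound $\area_{\mathcal P}(\psi(\phi(w)))$, and then ``telescope'' $w$ against $\psi(\phi(w))$ at linear cost using the finitely many trivial words $x\,\psi(\phi(x))^{-1}$. Your approach is more elementary and does not invoke any heavy machinery; the paper's approach buys brevity at the price of relying on the substantially harder \cref{quasi}, which in turn is essentially proved by the same kind of bookkeeping you carry out directly. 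Both are standard and both are valid; the one minor point worth making explicit in your write-up is that $\psi$, being a free-group homomorphism, carries the $F(X')$-identity $\phi(w)=\prod (r'_i)^{f'_i}$ to an $F(X)$-identity and therefore $\area_{\mathcal P}(\psi(\phi(w)))\leqslant\sum_i\area_{\mathcal P}(\psi(r'_i))\leqslant c_2 k$, which you do implicitly use.
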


We shall then denote the Dehn function of a finitely presented group $G$ by $\delta_G(n)$. Moreover, the Dehn function is a quasi-isometric invariant up to $\approx$. 

\begin{theorem}[\cite{gromov1996geometric}]
\label{quasi}
Let $G$ be a finitely presented group, $G=\langle X \rangle$. Let $H$ be a finitely generated group with generating set $Y$, $|X|,|Y|<\infty$. If $G,H$ are quasi-isometric, then $H$ is finitely presented and $\delta_{G,X}\approx \delta_{H,Y}$.
\end{theorem}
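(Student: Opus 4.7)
The plan is to use a quasi-isometry $\phi : G \to H$ with constants $(K,C)$ together with a quasi-inverse $\psi : H \to G$ to simultaneously produce a finite presentation for $H$ and to compare the two Dehn functions up to $\approx$. Throughout I would pass freely between words and loops in the Cayley graphs, and between fillings by relators and van Kampen diagrams.

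First I would establish that $H$ is finitely presented. Starting from a finite presentation $\langle X \mid R \rangle$ of $G$, I would define a candidate presentation of $H$ whose generators are $Y$ and whose relators are the (finitely many) words over $Y^{\pm 1}$ of length at most a constant $L = L(K, C, \max_{r \in R}|r|)$ that evaluate to the identity in $H$. To verify sufficiency, take any null-homotopic word $w$ in $Y^{\pm 1}$, trace the corresponding loop $\gamma$ in the Cayley graph of $H$, and transfer it to a loop $\gamma'$ in $G$ of length at most $(K+C)|w|$ by applying $\psi$ vertex-by-vertex and inserting $G$-geodesics between consecutive images. Fill $\gamma'$ by a van Kampen diagram $\Delta$ over $\langle X \mid R\rangle$, then pull $\Delta$ back across $\phi$: every $2$-cell of $\Delta$ has boundary of length $\leq \max|r|$ in $G$, so it becomes a loop in $H$ of length bounded by a function of $K$, $C$, and $\max|r|$, which can be chosen $\leq L$. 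Such a loop is filled by a single relator of the candidate presentation. This proves finite presentability of $H$ and provides the template for the Dehn function comparison.

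Next I would compare Dehn functions. For $w$ of length $n$ with $w =_H 1$, applying the construction above produces an $H$-filling whose $2$-cells come from (i) the $2$-cells of $\Delta$, which number at most $\delta_G((K+C)n)$, each contributing a bounded number of $H$-relators in the pullback, and (ii) ``seam'' triangles along the boundary that close the gap between $\gamma$ and the image of $\psi\circ\phi$ restricted to $\gamma$, of which there are $O(n)$. This gives
\[
\area_H(w) \;\leq\; C_1\,\delta_G(C_1 n) + C_1 n + C_1,
\]
so $\delta_H \preccurlyeq \delta_G$; the opposite inequality follows by swapping the roles of $\phi$ and $\psi$, $G$ and $H$.

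The hard part, in my view, will be bookkeeping in the pullback step: one must ensure that every bounded-length loop produced after pulling back a $2$-cell is actually filled by relators of the presentation we constructed, and that the seam between $\gamma$ and $\psi\circ\phi(\gamma)$ triangulates with only linearly many cells of bounded diameter. A clean way to encapsulate both issues is to work with Rips complexes: for sufficiently large scale parameter, the Rips complex $P_d(G)$ is simply connected (this is equivalent to finite presentability of $G$), and a quasi-isometry $\phi : G \to H$ extends to a quasi-isometry $P_d(G) \to P_{d'}(H)$ for suitable $d' = d'(d,K,C)$. Simple connectivity and filling functions transfer directly across such a quasi-isometric equivalence of simply connected simplicial complexes, and the translation between filling in the Rips complex and filling in the group presentation contributes only a Lipschitz change in $n$, preserving the $\approx$-class.
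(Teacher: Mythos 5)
The paper does not supply a proof of this statement; it is quoted from \cite{gromov1996geometric} as a known fact (see also Alonso and Bridson's survey on filling functions for textbook treatments). So there is no paper proof to compare your argument against on the details; I can only assess your sketch on its own terms.

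Your outline is the standard argument and is essentially correct. You correctly identify the two ingredients: (i) transfer a null-homotopic loop $\gamma$ in $H$ to $G$ via the quasi-inverse $\psi$, fill it there, and push the filling back across $\phi$, observing that each $2$-cell boundary acquires length at most $(K+C)\max_{r\in R}|r|$, hence lies among the relators of the candidate presentation; and (ii) account for the bounded-width annulus between $\gamma$ and $\phi\circ\psi(\gamma)$ with $O(n)$ cells of bounded size. Two small points to tighten. First, in your area comparison you write ``the image of $\psi\circ\phi$ restricted to $\gamma$''; since $\gamma$ lives in $H$ this should read $\phi\circ\psi$. Second, your first paragraph asserts finite presentability before introducing the seam cells; as written, the pulled-back diagram fills $\phi\circ\psi(\gamma)$ rather than $\gamma$, so the seam argument is already needed at that stage, not only for the area estimate — it is worth stating the seam construction before concluding that the candidate presentation is a presentation. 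Your suggestion to pass through Rips complexes is a clean way to package the same content: simple connectivity of $P_d(G)$ for large $d$ encodes finite presentability, and quasi-isometries induce proper homotopy equivalences between Rips complexes at comparable scales, with filling functions translating up to a Lipschitz reparametrization of $n$. Either route gives the theorem; neither is what the paper does, because the paper does not prove it at all.
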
 

\cref{presentationEquivalence} follows immediately from the fact that Cayley graphs of the same group over different generating sets are quasi-isometric. 

Here is another useful consequence of \cref{quasi}. 
\begin{corollary}
\label{finiteindex}
	Let $G$ be a finitely presented group and $H\leqslant G$ such that $[G:H]<\infty$, then $H$ is finitely presented and quasi-isometric to $G$ hence $\delta_H\approx \delta_G$. 
\end{corollary}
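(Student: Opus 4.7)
The plan is to reduce Corollary \ref{finiteindex} to Theorem \ref{quasi}, which already supplies both the finite presentability of $H$ and the equivalence $\delta_H\approx\delta_G$ as soon as $H$ is known to be finitely generated and quasi-isometric to $G$. Thus I only need to verify two things: first, that a finite-index subgroup of a finitely generated group is itself finitely generated; second, that the inclusion $H\hookrightarrow G$ is a quasi-isometry for some (equivalently, any) choice of finite generating sets on the two groups.

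For finite generation I would invoke the classical Reidemeister--Schreier construction. Fix a finite generating set $S$ of $G$ and a right transversal $\{g_1,\dots,g_n\}$ for $H$ in $G$, with $g_1=1$. For each pair $(i,s)\in\{1,\dots,n\}\times (S\cup S^{-1})$, let $g_{\sigma(i,s)}$ denote the transversal representative of the coset $g_isH$, and set $t_{i,s}=g_isg_{\sigma(i,s)}^{-1}\in H$. The finite set $T=\{t_{i,s}\}$ generates $H$; this is a standard consequence of the rewriting procedure described below.

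For the quasi-isometry I would enlarge $S$ if necessary so that each element of $T$ has bounded $S$-length. The inequality $|h|_S\leqslant C|h|_T$ is then immediate. For the reverse inequality, given a geodesic $S$-word $s_1\cdots s_d$ representing $h\in H$, run the Schreier rewriting process: put $g_{j_0}=1$ and inductively let $g_{j_i}$ be the transversal representative of the coset $s_1\cdots s_i H$. Inserting the canceling product $g_{j_i}^{-1}g_{j_i}$ between consecutive letters yields
\[
h=(g_{j_0}s_1g_{j_1}^{-1})(g_{j_1}s_2g_{j_2}^{-1})\cdots(g_{j_{d-1}}s_dg_{j_d}^{-1}),
\]
and $g_{j_d}=1$ because $h\in H$. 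Each factor is a Schreier generator in $T$, so $|h|_T\leqslant d=|h|_S$, giving the quasi-isometric embedding. Coboundedness is trivial: every $g\in G$ can be written $g=hg_i$ and so lies within $S$-distance $\max_i|g_i|_S$ of $H$.

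The only real content is the Schreier rewriting bookkeeping; there is no genuine obstacle beyond that. Once the quasi-isometry is established, an appeal to Theorem \ref{quasi} delivers both the finite presentability of $H$ and the equivalence of Dehn functions, completing the proof.
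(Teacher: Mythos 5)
Your proposal is correct and follows exactly the route the paper intends: the corollary is stated immediately after Theorem~\ref{quasi} as a consequence of the standard fact that a finite-index subgroup of a finitely generated group is itself finitely generated and quasi-isometric to the ambient group, which you verify via Reidemeister--Schreier. The only blemish is a coset-side slip (you fix a \emph{right} transversal but then speak of the representative of $g_isH$; it should be $Hg_is$ for $t_{i,s}=g_isg_{\sigma(i,s)}^{-1}$ to lie in $H$), but this is cosmetic and the argument goes through as intended.
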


\subsection{Van Kampen Diagrams}
\label{perlim3}

One way to visualize the area of a given word is to consider what is called a \emph{van Kampen diagram}. Let $G=\langle X\mid R\rangle$ be a finitely presented group and $w$ be a reduced word which is equal to 1. Then by the previous discussion, $w$ has a decomposition as following:
\begin{equation}
\label{decomposition}
	w=_{F(X)}  \prod_{i=1}^k r_i^{f_i} \text{ where } r_i\in R\cup R^{-1}, f_i\in F(X).
\end{equation}

For every decomposition (\ref{decomposition}), we can draw a diagram which consists of a bouquet of ``lollipops". Each ``lollipop" corresponds to a factor $r_i^{f_i}$, the stem of which is a path labeled by $f_i$ and the candy of which is a cycle path labeled by $r_i$. Going counterclockwise around the ``lollipop" starting and ending at the tip of the stem, we read $f_i^{-1}r_i f_i$. Thus the boundary of the bouquet of ``lollipops" is labeled by the word which is the right-hand side of (\ref{decomposition}). 

Note that we obtained $w$ from the right hand side of (\ref{decomposition}) by cancelling all consecutive pairs of $xx^{-1}$ or $x^{-1}x, x\in X$ on the boundary and removing subgraphs whose boundaries labelled by $xx^{-1}$ or $x^{-1}x, x\in X$ (which is a ``dipole" or a sphere). In the diagram, the corresponding process is identifying two consecutive edges with the same label but different orientation on the boundary. After finitely many such reductions, we will obtain a diagram whose boundary is labeled by $w$. 

\begin{figure}[H]
		\centering
			\includegraphics[width=13cm]{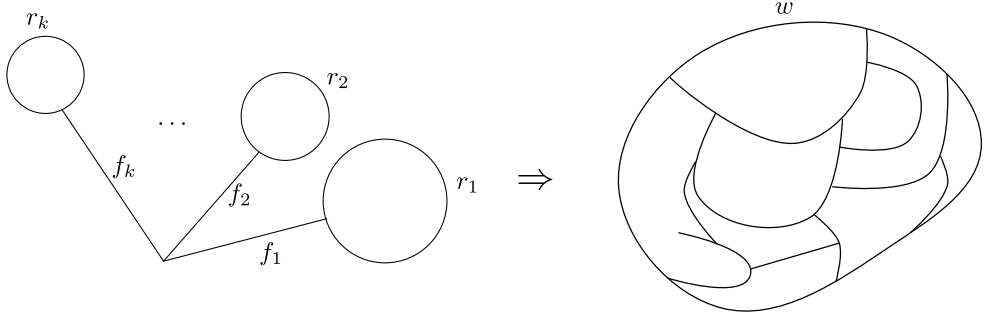}
			\caption{A bouquet of ``Lolipops" and its corresponding van Kampen Diagram}
\end{figure}

The resulting diagram is the Van-Kampen diagram of $w$. The edges are labeled by elements in $X$ and cells are (i.e. the closure of a bounded connected components of the plane minus the graph) labeled by words from $R\cup R^{-1}$. 

For example, in group $\langle a,b\mid [a,b]=1\rangle$, the Van-Kampen diagram of $[a^2,b]=[a,b]^a[a,b]$ looks like this.

\begin{figure}[H]
		\centering
			\includegraphics[width=13cm]{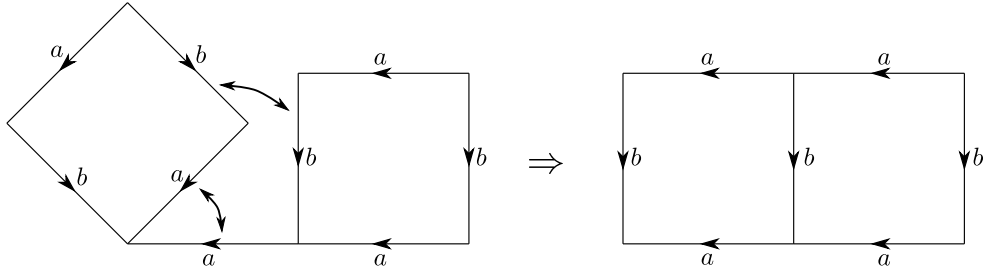}
			\caption{Obtain the van Kampen diagram of $[a^2,b]$}
\end{figure}

The following is called the van Kampen Lemma.

\begin{lemma}
	If a reduced group word $w$ over the alphabet $X$ is equal to 1 in $G=\langle X\mid R\rangle$, then there exists a van Kampen diagram over the presentation of $G$ with boundary label $w$. 
	
	Conversely, let $\Delta$ be a van Kampen diagram over $G=\langle X\mid R\rangle$ where $X=X^{-1}$ and $R$ is closed under cyclic shifts and inverses. Let $w$ be the boundary of $\Delta$. Then $w$ is equal in the free group $F(X)$ to a word of the form $u_1r_1u_2r_2\dots u_kr_ku_{k+1}$ where 
	\begin{enumerate}[(1)]
		\item each $r_i$ is from $R$;
		\item $u_1u_2\dots u_{k+1}=1$ in $F(X)$;
		\item $\sum_{i=1}^{m+1}|u_i|\leqslant 4e$ where $e$ is the number of edges of $\Delta$.
	\end{enumerate}
	In particular, $w$ is equal to 1 in $G$.
\end{lemma}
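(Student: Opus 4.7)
The lemma has two directions. The forward implication is essentially a translation of the definition of the normal closure into geometry and is already sketched in the excerpt. If $w$ is a reduced word equal to 1 in $G = \langle X \mid R \rangle$, then by definition $w =_{F(X)} \prod_{i=1}^k f_i^{-1} r_i f_i$ for some $r_i \in R \cup R^{-1}$ and $f_i \in F(X)$. I would build the corresponding lollipop bouquet $\Delta_0$ whose boundary reads $\prod_i f_i^{-1} r_i f_i$, and then fold each pair of consecutive boundary edges labeled $x x^{-1}$ or $x^{-1} x$ (and remove any sphere created) to perform free reduction of the boundary word. After finitely many such moves the boundary reads exactly $w$, producing the desired van Kampen diagram.

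For the converse, let $\Delta$ be a van Kampen diagram with $k$ two-cells, $e$ edges, and boundary label $w$. I would proceed by induction on $k$. The base case $k = 0$ is immediate: a planar 2-complex with no two-cells whose complement in the plane has a single unbounded component is a tree, and so its boundary cycle reduces freely to the empty word. For the inductive step, use planarity and connectedness to choose a two-cell $c$ whose boundary shares an edge with $\partial \Delta$. Decompose $\partial c = p \cdot s^{-1}$ with $p$ a subpath of $\partial \Delta$; by closure of $R$ under cyclic shifts and inverses, the label of $\partial c$ is an element $r_k \in R$. Removing $c$ and folding away any stems that appear yields a diagram $\Delta'$ with $k - 1$ two-cells whose boundary equals that of $\Delta$ with $p$ replaced by $s$. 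Applying the inductive hypothesis to $\Delta'$ gives a decomposition of its boundary in the required form, and re-inserting the relation $p = r_k \cdot s$ up to conjugation produces the decomposition $u_1 r_1 u_2 r_2 \cdots u_k r_k u_{k+1}$ for $w$. The telescoping identity $u_1 u_2 \cdots u_{k+1} = 1$ in $F(X)$ then follows automatically from the construction.

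The main difficulty is the length bound $\sum_i |u_i| \leqslant 4e$. Each induction step introduces connecting segments that traverse the shared boundary of the removed cell together with a short path in $\Delta'$. A careful accounting, in which the peeling order is chosen so that each edge of $\Delta$ is crossed by the $u_i$'s at most twice from each side of the diagram, yields the constant $4$ in the bound. This constant is almost certainly not sharp, and making the counting rigorous is the delicate combinatorial part of the proof; the conceptual point, however, is simply that each edge of $\Delta$ contributes a bounded amount to $\sum_i |u_i|$, independent of the global structure of $\Delta$. Once the bound is established, the final clause of the lemma (that $w$ equals $1$ in $G$) is immediate, since the right-hand side $u_1 r_1 u_2 r_2 \cdots u_k r_k u_{k+1}$ lies in the normal closure of $R$ by the telescoping identity.
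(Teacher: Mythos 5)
The paper states this lemma without proof (it is a standard result in combinatorial group theory, typically cited from Olshanskii's or Sapir's texts), so there is no in-paper argument to compare against; I am evaluating your sketch on its own merits.

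Your treatment of the forward direction is correct and standard: build the lollipop bouquet from a normal-closure expression for $w$, then fold away cancelling boundary pairs and dipoles until the boundary reads $w$.

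The converse is where there is a genuine gap. You propose to induct on the number of 2-cells by peeling off a cell $c$ that shares an edge with $\partial\Delta$, writing $\partial c = p\cdot s^{-1}$ and replacing $p$ by $s$ on the boundary. This does produce a decomposition of the required algebraic shape and the telescoping identity is indeed automatic. But the naive peeling does \emph{not} give the linear bound $\sum_i |u_i|\leqslant 4e$. At each peeling step the conjugating word is (essentially) the prefix $w_1$ of the current boundary cycle up to the removed subpath $p$, whose length is bounded only by the current boundary length. Since replacing $p$ by $s$ can lengthen the boundary (if $|s|>|p|$), after $i$ peelings the boundary length, and hence $|w_1|$, can be of order $|w|+i\cdot\max|r|$; summing over $k$ cells gives a bound that is quadratic in $e$, not linear. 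Your sentence that ``a careful accounting, in which the peeling order is chosen so that each edge of $\Delta$ is crossed by the $u_i$'s at most twice from each side, yields the constant $4$'' names the desired conclusion but supplies neither the ordering nor the argument; as written it is an assertion, not a proof. The actual mechanism behind the $4e$ bound is a spanning-tree construction: fix a base vertex $o$ on $\partial\Delta$ and a spanning tree $T$ of $\Delta^{(1)}$, route all connecting paths through $T$, and order the 2-cells by a depth-first traversal of the dual graph so that consecutive basepoints are close in $T$. In the telescoped form the $u_i$ are then the \emph{increments} between consecutive tree-paths, and each edge of $T$ is traversed a bounded number of times (independent of $k$), which is where the linear-in-$e$ estimate comes from. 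You need to make this (or an equivalent amortisation argument) explicit to complete the proof; without it, item (3) of the lemma remains unestablished.

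Finally, a small remark on the statement itself: the index $m+1$ in item (3) is evidently a typo for $k+1$.
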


We say a Van Kampen diagram is \emph{minimal} if it has the minimal number of cells over all such diagrams of the same word. For a word $w=_G 1$, the area of $w$ is the same as the number of cells of a minimal van Kampen diagram. This fact is useful to estimate the lower bound of the Dehn function. We will use the van Kampen diagram to compute Dehn functions of a class of examples in the last section. 

Other applications of the van Kampen diagram can be found in many books. For example, in the book \cite{ol2012geometry}, one can found the study of van Kampen diagrams used to construct groups with extreme properties such as infinite bounded torsion group, Tarski monsters, etc.

\section{The Membership Problem of a Submodule of a Free Module}
\label{membership}

\subsection{A Well-Order on the Free Module}
\label{membership1}

Let $R:=\mathbb Z[x_1,\dots,x_k]$ be a polynomial ring over $\mathbb Z$. Consider a free $R$-module $M$ with basis elements $e_1,\dots,e_m$. A \emph{term} in $M$ is a product of an integer, a monomial in $R$, and an element from the basis. A typical term looks like $a\mu e_i$, where $a\in \mathbb Z, \mu$ is a monomial in $R$. Let $\mathcal T$ to be the set of all terms in $M$. In addition, we will call $\mu e_i$ a \emph{monomial} in $M$, denoted  by $\mathcal U$ the set of monomials of $M$. The set of monomials in the polynomial ring $R$ in the usual sense will be denoted by $\mathcal X$. For a term $g\in \mathcal T$, we denote $C(g), M(g)$ to be the coefficient, and monomial part of $g$ respectively. An element in $M$ is a finite sum of terms. From now on, we only consider reduced elements in $M$, in a sense that no terms are sharing the same monomial. We also denote $\supp(f)$ to be the set of monomials with non-zero coefficients.

Our first goal is to put a well-order on $\mathcal T$. To construct such an order, we have to put well-orders on $\mathbb Z,\mathcal X$ and $\{e_1,\dots,e_m\}$ separately. Then we will extend it to $M$. 

On $\mathbb Z$, we define an order $\prec$ as following
\[0\prec 1 \prec 2 \prec \dots \prec -1 \prec -2 \prec \dots.\]
Under this order, all negative numbers are larger than any positive number. Let $a,b\in \mathbb Z$ where $a\prec b$, then there exists unique $q,r$ such that $a=qb+r, 0< r < |b|$. Note that $r\prec a$ whether $a$ is positive or negative, thus we can reduce any number to its remainder in this sense. The remainder is always positive, which is important to us. It is not hard to see that $\prec$ on $\mathbb Z$ is a well-order.

For monomials in $R$, we use the degree lexicographical order (also called shortlex or graded lexicographical order) $\prec$ which is defined with respect to $x_1 \succ x_2 \succ \dots \succ x_k$, i.e. for $\mu_1=x_1^{n_1}x_2^{n_2}\dots x_k^{n_k}, \mu_2=x_1^{m_1}x_2^{m_2}\dots x_k^{m_k}$
\[\mu_1\prec \mu_2 \text{ if }\sum_{i=1}^k|n_i|<\sum_{i=1}^k |m_i| \text{ or } \sum_{i=1}^k|n_i|=\sum_{i=1}^k |m_i|, \mu_1\prec_{lex} \mu_2,\]
where $\prec_{lex}$ is the usual lexicographical order which in defined in the following way
\[x_1^{n_1}x_2^{n_2}\dots x_k^{n_k}\prec_{lex} x_1^{m_1}x_2^{m_2}\dots x_k^{m_k} \text{ if $n_i<m_i$ for the first $i$ where $n_i$ and $m_i$ differ}.\]
$\prec$ on $\mathcal X$ in fact is a well-oder while $\prec_{lex}$ might not be. (See \cite{baader1999term}.)

Finally we fix an order $e_1\succ e_2 \succ \dots \succ e_s$. We now set $\prec$ on $T$ to be the lexicographical order based on $ \mathcal X \succ \{e_1,\dots,e_m\} \succ \mathbb Z$. For instance, 
\[7x_1^2x_2e_2 \prec 5x_1^{3}e_1, 3x_1^3x_2^5e_2\prec 3x_1^3x_3^6e_2, 2x_1^5x_3^2e_3\prec 4 x_1^5x_3^2e_3. \]
Since $\prec$ on each of $\mathcal X, \{e_1,\dots,e_m\} , \mathbb Z$ is a well-order, we obtain a well-order $\prec$ on $\mathcal T$. 
 
With the well-order $\prec$, we are able to compare any two terms. Consequently, we can define the leading monomial $LM(f)$ of $f$ to be the largest monomial among $\supp(f)$. For example, 
\[LM(x_1^7e_1+3x_1^3x_2^4e_2)=x_1^7e_1,LM(x_2^3e_1+(x_2^5x_3^2+x_2^3x_4^5)e_2+x_2^5x_3^2e_3)=x_2^3x_4^5e_2.\]
Next, we define the leading coefficient of $f$ to be the coefficient of the leading monomial, denoted by $LC(f)$. Then the leading term of $f$ can be defined as
\[LT(f):=LC(f)\cdot LM(f).\] 

We then extend $\prec$ to $M$. For $g,f\in M$, we define $g\prec f$ inductively as follows
\[g\prec f \text{ if } LT(g) \prec LT(f) \text{ or } LT(g)=LT(f), g-LT(g)\prec f-LT(f).\]
Since $\prec$ on $T$ is a well-oder, then so is $\prec$ on $M$.

Note that $\prec$ is compatible with multiplication by elements from $\mathcal X$ i.e., if $g\prec h$, then $\mu g\prec \mu f$, $\mu \in \mathcal X$.

One remark on $\prec$ is that it is Noetherian on $\mathcal U$ as well as $\mathcal X$, the set of monomials in $M$, i.e., there is no infinite descending chain of monomials. However, the statement is not true for $\prec$ on $\mathcal T$. Because we have an infinite descending chain for negative numbers. This issue can be avoided by what we will introduce in the next subsection: polynomial reduction.

\subsection{Polynomial Reduction and Gr\"{o}bner Bases}
\label{membership2}

Now let us define the key ingredient for the application of Gr\"{o}bner bases: polynomial reduction.

For two monomials $\mu e$ and $\mu' e'$ from $\mathcal U$, we say $\mu e\mid \mu' e'$ if $\mu\mid \mu'$ and $e=e'$. Let $F=\{f_1,\dots,f_l\}$ be a finite subset of $M$ and $S$ be the submodule generated by $F$. Given $g,h\in M$, we define the polynomial reduction $g \rightarrow_{F} h$ as follows: if there exists $f\in F$ and a term $g_0\in \mathcal T$ of $g$ such that $LM(f)\mid M(g_0), LC(f)\prec C(g_0)$, then
\[g=\frac{qM(g_0)}{LM(f)} f+h,\]
where $C(g_0)=q LC(f)+r$, $q,r$ are unique integers such that $0< r <|LC(f)|.$
 
 For $g \longrightarrow_{F} h$, read ``$g$ reduces to $h$ modulo $F$''. If there's no such $f$ and $g_0$, then we say that $g$ is \emph{irreducible} modulo $F$.

 Note that we naturally have $h\prec g$ if $g\longrightarrow_F h$. We claim that $\longrightarrow_F$ is Noetherian, i.e., there is no infinite reduction sequence. First, note that we turn the coefficient of $M(g_0)$ of $h$ to a positive number after a reduction, then there are only finitely many possible reductions that can be applied to monomial $M(g_0)$. Thus if we assume that $g_0$ is the largest term can be reduced in $g$ modulo $F$, then after finitely many reductions, the monomial of the largest term that can be reduced is strictly less than the original one. Since $\prec$ is Noetherian for monomials, we only have a reduction of finite length for any given $g\in M$. 
 
 Let $\longrightarrow^*_F$ be the reflexive and transitive closure of $\longrightarrow_F$. Then for each $g\in M$, there exists $h\in M$ such that $g\longrightarrow_F^* h$ and $h$ is irreducible modulo $F$. We call $h$ to be a reduced form of $g$ modulo $F$. Unfortunately, the reduced form of an element in $M$ may not be unique. In fact, at each step of reduction, we may have multiple choices of $f\in F$ that can be applied to this reduction. This yields our motivation for defining \emph{Gr\"{o}bner basis}: a generating set $F$ such that every element in $M$ has a unique reduced form modulo $F$. In theoretical computer science, this property is called Church-Rosser property (See \cite{buchberger1985algorithmic}).
 
 We denote $g\equiv_S h$ if $g-h\in S$. $\equiv_S$ defines an equivalent relation on $M$. We let the \emph{normal form} of $g$ to be the smallest element in its equivalent class with respect to $\prec$. It is well-defined since $\prec$ is a well-order. We denote the normal form of $g$ by $NF(g)$.

\begin{definition}
	Let $M$ be a free $R$-module of finite rank, and $S$ be a submodule of $M$. A finite generating set $F$ of $S$ is called \emph{Gr\"{o}bner basis} if $g \rightarrow_F^* NF(g),\forall g\in M$.
\end{definition}

\begin{rems}
	$R$ is a Noetherian ring hence $M$ is a Noetherian module. Thus any submodule of $M$ is finitely generated.
\end{rems}
	
Let $S_u=\{g\in S| LM(g)=u\},u\in \mathcal U$ and $L_u=\{LC(g)\mid g\in S_u\}$. It is not hard to see that $L_u$ is an ideal in $\mathbb Z$. Thus it is generated by the smallest element in this ideal with respect to $\prec$. We denote $h_u$ to be element such that $LC(h_u)$ generates $L_u$ since $\mathbb Z$ is a principle ideal domain. Note that the leading coefficient of $h_u$ is always positive. Since by our definition of $\prec$, negative numbers are larger than positive numbers. For our purpose, we denoted it $c_u$ hence $LT(h_u)=c_uu$. Let $P$ be the set of all such $h_u$ which generates $S$ over $\mathbb Z$ whenever $h_u$ can be defined (since $S_u$ might be empty). 

\begin{theorem}[\cite{sims1994computation}, Proposition 10.6.3]
	For any submodule of a free module of finite rank over $R=\mathbb Z[x_1,\dots,x_k]$, there exists a Gr\"{o}bner basis.
\end{theorem}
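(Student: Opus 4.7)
The plan is existence by a Buchberger-style completion argument, leveraging Noetherianity throughout. By Hilbert's basis theorem $R=\mathbb{Z}[x_1,\dots,x_k]$ is Noetherian, and $M$, being finitely generated, is a Noetherian $R$-module; in particular every submodule $S$ of $M$ is finitely generated, so I would start from any finite generating set $F_0\subseteq S$.

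Next I would set up the Buchberger criterion adapted to the ring $\mathbb{Z}$. Because $\mathbb{Z}$ is not a field, each critical pair $(f_i,f_j)$ whose leading monomials $\mu_i e_\ell$ and $\mu_j e_\ell$ share a basis vector produces \emph{two} obstructions rather than one. Setting $\nu=\mathrm{lcm}(\mu_i,\mu_j)$, $d=\gcd(LC(f_i),LC(f_j))$, and Bezout coefficients $a,b\in\mathbb{Z}$ with $a\,LC(f_i)+b\,LC(f_j)=d$, I would form the usual $S$-polynomial
\[
S(f_i,f_j)=\frac{LC(f_j)}{d}\cdot\frac{\nu}{\mu_i}\,f_i-\frac{LC(f_i)}{d}\cdot\frac{\nu}{\mu_j}\,f_j,
\]
which cancels the shared leading term, together with a $G$-polynomial
\[
G(f_i,f_j)=a\,\frac{\nu}{\mu_i}\,f_i+b\,\frac{\nu}{\mu_j}\,f_j,
\]
whose leading coefficient is the gcd $d$. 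The $G$-polynomial is essential because the well-order $0\prec 1\prec 2\prec\cdots\prec -1\prec -2\prec\cdots$ on $\mathbb{Z}$ only allows reducing a coefficient to a strictly smaller positive remainder, so a term of coefficient $d=\gcd(c_1,c_2)$ cannot be reduced using elements of leading coefficients $c_1,c_2$ unless an element with leading coefficient $d$ itself is in $F$.

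I would then iterate: for every pair in $F$ compute $S$- and $G$-polynomials, reduce them modulo $F$ via $\to_F^*$, and adjoin any non-zero remainders. Termination is the subtle point. The naive ascending chain of leading-term submodules $\mathrm{In}(F):=\langle LT(f):f\in F\rangle_R$ can stabilize before the algorithm halts, as in $F=\{2,3\}\subseteq\mathbb{Z}$ where $\mathrm{In}(F)=\mathbb{Z}$ but $\{2,3\}$ is not a Gr\"{o}bner basis (the $G$-polynomial $1=\gcd(2,3)$ must still be adjoined in order to reduce the element $1\in S$ to its normal form $0$). The correct invariant combines the leading-monomial data with, for each monomial $u\in\mathcal{U}$, the minimum $\prec$-coefficient available for reducing at $u$, namely $\min_{\prec}\{LC(f):f\in F,\,LM(f)\mid u\}$; a Dickson--Noetherian argument forces the refined chain to stabilize, so the algorithm halts with a finite $F$ closed under $S$- and $G$-completion.

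Finally I would verify that the resulting $F$ is a Gr\"{o}bner basis. Since $\to_F$ is terminating (noted earlier), by Newman's lemma local confluence suffices. The standard Buchberger argument reduces every local divergence $g\to_F g_1$, $g\to_F g_2$ to an $S$- or $G$-polynomial between the two reducing elements of $F$, which by the closure property reduces to $0$ modulo $F$; confluence follows, so every $g\in M$ has a unique $\to_F^*$-normal form, which one checks coincides with $NF(g)$ by $\prec$-minimality within the coset $g+S$. The main obstacle I anticipate is the careful alignment of the $\mathbb{Z}$-coefficient well-order with the critical-pair analysis---both in setting up the refined Noetherian invariant for termination and in proving that $S$- and $G$-polynomials together cover every local divergence---which is the heart of the Buchberger criterion over an integer coefficient ring.
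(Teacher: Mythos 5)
Your Buchberger-completion argument over $\mathbb Z$ (with $S$- and $G$-polynomials, then Newman's lemma) is a correct and standard route to existence, but it is genuinely different from the one behind the cited result. Sims's Proposition~10.6.3, as the paper uses it, is structural rather than algorithmic: for each monomial $u$ one considers the ideal $L_u\subseteq\mathbb Z$ of leading coefficients of elements of $S$ with leading monomial $u$, chooses $h_u\in S$ with $LC(h_u)$ generating $L_u$, and deduces from Noetherianity that a finite set $F$ exists with $P=\mathcal XF$, where $P=\{h_u\}$. That explicit form is exactly what the paper exploits afterwards (in the division estimate of \cref{division0}) when cancelling leading terms along reduction chains; it falls out of the structural argument for free but is not visible from Buchberger completion. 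Your route has the compensating advantage of being algorithmic and of making the Euclidean-domain subtleties (the need for $G$-polynomials, the failure of field-style reasoning) explicit.

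One place your sketch needs tightening is the termination step. The invariant you propose, $u\mapsto\min_\prec\{LC(f):f\in F,\,LM(f)\mid u\}$, is a pointwise-decreasing function on the infinite index set $\mathcal U$, which by itself does not force termination. The precise version of your ``Dickson--Noetherian'' step should be: if new irreducible generators $f_1,f_2,\dots$ were added indefinitely, Dickson's lemma on the monomial set $\mathcal U$ extracts a subsequence with $LM(f_{i_1})\mid LM(f_{i_2})\mid\cdots$; since each $f_{i_{j+1}}$ is irreducible modulo the set already containing $f_{i_j}$, the positive leading coefficients satisfy $LC(f_{i_1})>LC(f_{i_2})>\cdots$, an infinite strictly decreasing sequence of positive integers, which is impossible. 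Your $\{2,3\}$ example correctly shows why the naive initial-module chain can stall, but the extraction of the divisibility chain is the actual content and should be stated, not gestured at. The closing identification of the unique $\to_F^*$-normal form with $NF(g)$ also merits a sentence: $NF(g)$ is irreducible modulo $F$ (otherwise some reduction would produce a $\prec$-smaller representative of $g+S$), so by confluence and termination it is the unique normal form of $g$.
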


Let $F$ be a Gr\"{o}bner basis for $S$. According to the proof of (\cite{sims1994computation}, Proposition 10.6.3), $F$ can be constructed as a finite set such that $P=\mathcal XF:=\{uf\mid u\in \mathcal X,f\in F\}$, which allows us to reduce $g\in S$ by deleting the leading terms with elements in $F$. In the next subsection, we will use this specific $F$ for the Gr\"{o}bner basis.

\subsection{Division Algorithm}
\label{membership3}

For an element $g\in M$, $g$ can be written as a finite sum of distinct terms, i.e
\[g=c_1u_1+c_2u_2+\dots+c_d u_d,\]
where $c_i\in \mathbb Z, u_i\in \mathcal U$ and $u_1\succ u_2\succ \dots \succ u_d$. We define the length of $g$ to be $|g|:=\sum_{i=1}^d |c_i|$. And if the leading monomial of $g$ is $x_1^{n_1}x_2^{n_2}\dots x_k^{n_k}e_i$, we define $\deg(g)=\sum_{i=1}^k n_i$. One immediate observation is that if $g\prec h$ then $\deg(g)\leqslant \deg(h)$.

Let $F=\{f_1,\dots,f_l\}$ be a Gr\"{o}bner basis for a submodule $S$ and $g=c_1u_1+\dots+c_d u_d\in S$ such that $\deg(g)\leqslant n, |g|\leqslant p$. Since $g\in S$, then $g\longrightarrow_F^* 0$. Thus there exists a finite sequence of reductions
\[g=g_0\longrightarrow_F g_1 \longrightarrow_F g_2 \longrightarrow_F g_3\longrightarrow_F \dots \longrightarrow_F g_r=0.\]
At each step, if we always choose to cancel the leading term of $g_i$ using the polynomial reduction (this is always possible since $g$ can be reduced to 0), we may assume that $LM(g_0)\succ LM(g_1)\succ LM(g_2)\succ \dots \succ LM(g_r)=0$. Thus the number of steps of reduction is bounded by the number of monomials less or equal to $LM(g)$. Recall that $m$ is the rank of the free module, then 
\[r\leqslant |\{u\in \mathcal U\mid u\prec LM(g)\}|\leqslant mG_k(n),\]
where $G_k(n)$ is the growth function of a free commutative monoid with a free generating set of size $k$ (See \cite{sapir2014combinatorial,3.7.1}). It is well-known that $G_k(n)$ is a polynomial of degree $k$. In fact, 
\[G_k(n)={n+k \choose k}. \]

At the $j$th step of reduction, we have
\[g_{j}=g_{j-1}-a_{j}\mu_j f_{i_j},\]
where $a_j\in \mathbb Z, \mu_j\in \mathcal X, 1\leqslant i_j\leqslant l$ and $LT(g_{j-1})=LT(a_{j}\mu_i f_{i_j})$. Then $|a_j|\leqslant LC(g_{j-1})\leqslant |g_{j-1}|$. Let $C=\max\{|f_1|,|f_2|,\dots,|f_l|\}.$ We also observe that 
\begin{equation}
\label{eq01}
|g_{j}|\leqslant |g_{j-1}|+|a_j||f_{i_j}|\leqslant |g_{j-1}|+C|a_j|.
\end{equation}
Additionally, we have $|a_1|\leqslant |g_0|=p$, and 
\begin{equation}
\label{eq02}
|a_{j}|\leqslant LC(g_{j-1})\leqslant |g_{j-1}|.
\end{equation}
Combine (\ref{eq01}) and (\ref{eq02}) inductively,  		
\[|a_{j}|\leqslant |g_{i-1}|\leqslant |g_{i-2}|+C|a_{j-1}|\leqslant |g_{i-2}|(1+C)\leqslant \dots \leqslant p(1+C)^{j-1}, j\geqslant 1.\]
Adding all the steps up, we have
\[g=\sum_{j=1}^r a_{j}\mu_j f_{i_j}=\sum_{i=1}^l \alpha_i f_i, \alpha_i\in R. \]
Note that 
\[\sum_{i=1}^l|\alpha_i|=\sum_{j=1}^r |a_j|\leqslant p(1+(1+C)+(1+C)^2+\dots+(1+C)^{r-1})\leqslant \frac{p((1+C)^{mG_k(n)}-1)}{C}.\]

In general, one important consequence of Gr\"obner bases is the following

\begin{corollary}[Division]
\label{division0}
	Let $M$ be a free module over a polynomial ring $R=\mathbb Z[x_1,\dots,x_k]$. Let $F=\{f_1,\dots,f_l\}$ be a Gr\"obner basis for a submodule $S$. Then there exists a constant $K$ such that for every $g\in M,\deg(g)\leqslant n,|g|\leqslant p$, one can write
	\[g=\sum_{i=1}^n \alpha_i f_i+r\]
with $\alpha_i\in R,r=NF(g)$ and 
\[\deg(\alpha_i f_i)\prec \deg(g), 1\leqslant i \leqslant l, \sum_{i=1}^l|\alpha_i|\leqslant pK^{n^k}.\]
\end{corollary}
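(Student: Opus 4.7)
The plan is to reuse almost verbatim the reduction analysis carried out in the paragraphs immediately above the statement, promoting it from the case $g\in S$ (where the excerpt reduces to $0$) to a general $g\in M$ whose reduction terminates at the claimed remainder $r=NF(g)$. First I would run $\longrightarrow_F$ on $g$, at every step choosing to eliminate the current leading term; this is always possible until the current element is irreducible, and because $F$ is a Gröbner basis the process terminates at the unique normal form $r=NF(g)$. Since the leading monomials $LM(g_j)$ form a strictly $\prec$-decreasing sequence contained in $\{u\in\mathcal U\mid u\preccurlyeq LM(g)\}$, the number of steps is bounded by $mG_k(n)=m\binom{n+k}{k}$, a polynomial in $n$ of degree $k$.

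Next I would control coefficient growth by the same induction used above the statement. Setting $C=\max_i|f_i|$ and writing $g_j=g_{j-1}-a_j\mu_j f_{i_j}$, the inequalities $|a_j|\le LC(g_{j-1})\le|g_{j-1}|$ and $|g_j|\le|g_{j-1}|+C|a_j|$ combine with $|g_0|\le p$ to yield $|a_j|\le p(1+C)^{j-1}$ by induction on $j$. Grouping the steps by the index $i_j$ and setting $\alpha_i=\sum_{i_j=i}a_j\mu_j$ gives
\[\sum_{i=1}^l|\alpha_i|=\sum_{j=1}^r|a_j|\le\frac{p\bigl((1+C)^{mG_k(n)}-1\bigr)}{C}.\]
Since $mG_k(n)$ is polynomial of degree $k$ in $n$, there is a constant $K$ depending only on $F$ with $(1+C)^{mG_k(n)}\le K^{n^k}$, which yields $\sum_i|\alpha_i|\le pK^{n^k}$.

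For the degree estimate I would observe that the reduction rule forces $LM(a_j\mu_j f_{i_j})=LM(g_{j-1})$, hence $\deg(a_j\mu_j f_{i_j})\le\deg(g_{j-1})\le\deg(g)$ at every step. Since each $\alpha_i$ is a sum of terms $a_j\mu_j$ with $i_j=i$, each satisfying this degree bound, we obtain $\deg(\alpha_i f_i)\le\deg(g)$ for every $i$.

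The only point requiring care is confirming that the chosen leading-term reduction strategy terminates at precisely $NF(g)$ rather than at some other irreducible element; this is exactly the Gröbner basis property, which guarantees that reduction is confluent so every element of $M$ has a unique irreducible form modulo $F$. Everything else is the bookkeeping already in place in the excerpt, applied once more now that the reduction is allowed to stop at a nonzero remainder.
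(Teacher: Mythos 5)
Your overall strategy is exactly the paper's (the paper doesn't actually supply a separate proof of the corollary; it simply points to the computation carried out just above it for $g\in S$), but there is one assertion in your write-up that is false for general $g\in M$ and needs to be repaired before the step-count bound goes through.

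The problematic claim is ``at every step choosing to eliminate the current leading term; this is always possible until the current element is irreducible, and \ldots the leading monomials $LM(g_j)$ form a strictly $\prec$-decreasing sequence.'' This is exactly what the paper can say when $g\in S$, precisely because it parenthetically justifies it: ``this is always possible since $g$ can be reduced to $0$.'' For $g\notin S$ that justification evaporates. Once the leading coefficient of $g_j$ is reduced to its positive remainder $0<r<|LC(f)|$ it may become irreducible (with the paper's choice of Gr\"obner basis $F$ satisfying $P=\mathcal XF$ this happens after a single reduction, since any $f'\in F$ with $LM(f')\mid u$ has $|LC(f')|\geqslant c_u>r$), and yet $g_j$ itself is still reducible at some lower term. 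At that point the leading monomial does \emph{not} decrease; the reduction must move on to a smaller monomial while $LM(g_j)$ stays put. So ``strictly decreasing sequence of leading monomials'' is wrong, and it is the hinge of your step-count estimate.

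The fix is small and keeps everything else you wrote intact: track the monomial of the term being reduced at step $j$, not the leading monomial of $g_j$. With the specific basis $F$ chosen in the paper, each monomial $u\preccurlyeq LM(g)$ is reduced at most once before its coefficient becomes the unique remainder modulo $c_u$ and is thereafter untouched (reductions of strictly larger monomials only modify strictly smaller ones). Hence the monomials reduced along the chain form a strictly $\prec$-decreasing sequence bounded above by $LM(g)$, and the count $r\leqslant mG_k(n)$ still holds. Likewise $LM(a_j\mu_j f_{i_j})$ equals the monomial being reduced at step $j$, so $\deg(a_j\mu_j f_{i_j})\leqslant\deg(g_{j-1})\leqslant\deg(g)$ is still valid. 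Your coefficient-growth recursion $|a_j|\leqslant p(1+C)^{j-1}$ and the resulting $\sum_i|\alpha_i|\leqslant p\bigl((1+C)^{mG_k(n)}-1\bigr)/C\leqslant pK^{n^k}$ are then correct as written. In short: right approach, one incorrect intermediate assertion that happens to be harmless after the indicated substitution, but it should be stated correctly since it is the place where the transition from $g\in S$ to general $g\in M$ actually has content.
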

\begin{rems}
	This provides an algorithm to solve the membership problem for submodules of a finitely generated free module over polynomial rings with integral coefficients. Given $g,f_1,\dots,f_l$, to decide if $g$ lies in the submodule $S$ generated by $f_1,\dots,f_l$ we first find a Gr\"{o}bner basis for $S$. The algorithm which finds Gr\"{o}bner bases can be found in \cite{sims1994computation}. Once we have Gr\"{o}bner bases in hand, we can compute the normal form of $g$. Last, we use the fact that $g\in S$ if and only if $NF(g)=0$.
	\end{rems}

Let $T$ to be the free abelian group of rank $k$ with basis $t_1,\dots,t_k$. We can regard the group ring $\mathbb ZT$ as a factor ring of $\mathbb Z[t_1,t_1^{-1}\dots,t_k,t_k^{-1}]$ i.e
\[\mathbb ZT\cong \mathbb Z[t_1,t_1^{-1}\dots,t_k,t_k^{-1}]/\langle t_1t_1^{-1}-1,\dots,t_kt_k^{-1}-1\rangle.\]
Then a submodule generated by a finite set $F$ over $\mathbb ZT$ can be identified as a submodule generated by $F\cup\{t_1t_1^{-1}-1,\dots,t_kt_k^{-1}-1\}$ over $\mathbb Z[t_1,t_1^{-1}\dots,t_k,t_k^{-1}]$.

Therefore we have a similar result for group rings
\begin{corollary}
\label{division}
	Let $M$ be a free module over $\mathbb ZT$ where $T$ is a free abelian group of rank $k$. $S$ is a submodule of $M$. Then there exists a finite generating set $F=\{f_1,\dots,f_l\}$ and a constant $K$ such that for $g\in S$ with $\deg(g)\leqslant n,|g|\leqslant p$ there exists $\alpha_1,\dots,\alpha_l\in \mathbb ZT$
	\[g=\alpha_1f_1+\dots+\alpha_l f_l, \deg(\alpha_if_i)\leqslant \deg(g),\sum_{i=1}^l |\alpha_i|\leqslant pK^{n^{2k}}.\]  
\end{corollary}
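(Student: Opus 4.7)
The plan is to bootstrap from \cref{division0} by lifting the Laurent setting to an honest polynomial ring, following the remark preceding the statement. First I would set $R=\mathbb Z[x_1,y_1,\dots,x_k,y_k]$ (a polynomial ring in $2k$ algebraically independent indeterminates) and take the projection $\pi:R\twoheadrightarrow \mathbb ZT$ sending $x_i\mapsto t_i$, $y_i\mapsto t_i^{-1}$, whose kernel is $I=\langle x_1y_1-1,\dots,x_ky_k-1\rangle$. Writing $m$ for the rank of $M$, this $\pi$ extends coordinatewise to a surjection $\tilde M\twoheadrightarrow M$, where $\tilde M$ is the free $R$-module of rank $m$ with basis $e_1,\dots,e_m$. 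I would then form the submodule $\tilde S\subseteq \tilde M$ generated by lifts of any chosen finite generating set of $S$ together with the ``inversion relations'' $(x_iy_i-1)e_j$ for $1\leqslant i\leqslant k$, $1\leqslant j\leqslant m$; by construction $\pi(\tilde S)=S$.

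Next I would apply \cref{division0} to the pair $(\tilde M,\tilde S)$, producing a Gröbner basis $\tilde F$ for $\tilde S$ and an associated constant $\tilde K$. Given $g\in S$ with $\deg(g)\leqslant n$ and $|g|\leqslant p$, I construct a lift $\tilde g\in \tilde S$ by replacing each term $c\cdot t_1^{n_1}\cdots t_k^{n_k}e_j$ of $g$ with $c\cdot\prod_{n_i\geqslant 0}x_i^{n_i}\cdot\prod_{n_i<0}y_i^{-n_i}\cdot e_j$. Distinct terms of $g$ lift to distinct terms of $\tilde g$, so $|\tilde g|=|g|\leqslant p$, and every monomial appearing in $\tilde g$ has total polynomial degree $\sum_i|n_i|\leqslant \deg(g)\leqslant n$, so $\deg(\tilde g)\leqslant n$.

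Now I would feed $\tilde g$ into the division algorithm. Writing $\tilde F=\tilde F_0\sqcup\{(x_iy_i-1)e_j\}$ and setting $f_i:=\pi(\tilde f_i)$ for $\tilde f_i\in \tilde F_0$, \cref{division0} produces
\[\tilde g=\sum_{\tilde f_i\in\tilde F_0}\tilde\alpha_i\tilde f_i+\sum_{i,j}\beta_{ij}(x_iy_i-1)e_j,\qquad \sum_i|\tilde\alpha_i|+\sum_{i,j}|\beta_{ij}|\leqslant p\tilde K^{n^{2k}},\]
with $\deg(\tilde\alpha_i\tilde f_i)\leqslant \deg(\tilde g)$ for each $i$. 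The jump from the exponent $n^k$ of \cref{division0} to $n^{2k}$ here arises precisely because $R$ carries $2k$ indeterminates, so the relevant growth function $G_{2k}$ is a polynomial of degree $2k$ in $n$. Applying $\pi$ kills the inversion relations and yields $g=\sum_i\alpha_if_i$ with $\alpha_i:=\pi(\tilde\alpha_i)$, and the triangle inequality gives $\sum_i|\alpha_i|\leqslant\sum_i|\tilde\alpha_i|\leqslant pK^{n^{2k}}$ with $K:=\tilde K$.

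The last step, which I expect to be the main (and essentially only) bookkeeping obstacle, is the degree inequality in the $\mathbb ZT$-sense. Since $\pi$ sends a polynomial monomial $x_1^{a_1}y_1^{b_1}\cdots x_k^{a_k}y_k^{b_k}$ to $t_1^{a_1-b_1}\cdots t_k^{a_k-b_k}$, whose group-ring degree $\sum_i|a_i-b_i|$ is bounded by the polynomial degree $\sum_i(a_i+b_i)$, one concludes $\deg(\alpha_if_i)\leqslant\deg(\tilde\alpha_i\tilde f_i)\leqslant\deg(\tilde g)\leqslant\deg(g)$, completing the plan.
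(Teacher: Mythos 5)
Your proposal is correct and follows the same route the paper outlines (lifting $\mathbb ZT$ to a polynomial ring in $2k$ variables, adjoining the relations $t_it_i^{-1}-1$, applying \cref{division0}, and projecting back); the paper itself only gestures at this reduction, and you have supplied the bookkeeping. One small notational slip: the Gr\"obner basis $\tilde F$ produced by \cref{division0} for $\tilde S$ need not literally contain the generators $(x_iy_i-1)e_j$, so the decomposition $\tilde F=\tilde F_0\sqcup\{(x_iy_i-1)e_j\}$ should be replaced by simply taking $F=\{\pi(\tilde f):\tilde f\in\tilde F,\ \pi(\tilde f)\neq 0\}$ and noting that those $\tilde f$ with $\pi(\tilde f)=0$ drop out after applying $\pi$ to the division identity; this does not affect the degree or length estimates you give.
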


\begin{rems}
$\deg (g)$ and $|g|$ for element $g\in \mathbb ZT$ are inherited from $\mathbb Z[t_1,t_1^{-1}\dots,t_k,t_k^{-1}]$.		
\end{rems}

\section{A Characterization of Finitely Presented Metabelian Groups}
\label{finitepresented}

\subsection{A Geometric Lemma}
\label{finitepresented1}

Let $\mathbb R^n$ be the Euclidean vector space with the usual inner product $\langle \cdot,\cdot\rangle$. We denote the norm induced by this inner product by $\|x\|=\sqrt{\langle x,x\rangle}$. If $r>0$, then $B_r$ denotes the open ball of radius $r$, i.e $B_r=\{x\in \mathbb R^n\mid \|x\|<r\}$.

We consider a \emph{finite} collection $\mathcal F$ of \emph{finite} subsets $L\subset \mathbb R^n$. Say that an element $x\in \mathbb R^n$ \emph{can be taken from} $B_r$ if either $x\in B_r$ or if there exists $L\in \mathcal F$ such that 
\[x+L=\{x+y\mid y\in L\}\subset B_r.\]

\begin{lemma}[\cite{bieri1980valuations}, Lemma 1.1]
\label{geometric}
	Assume that for every $0\neq x\in \mathbb R^n$, there is $L\in \mathcal F$ such that $\langle x,y\rangle>0$ for all $y\in L$. Then there exists a radius $r_0\in \mathbb R^{+}$ and a function $\varepsilon: (r_0,\infty)\to \mathbb R^+$ with the property that for $r>r_0$ each element of $B_{r+\varepsilon(r)}$ can be taken from $B_r$ by $\mathcal F$. 
\end{lemma}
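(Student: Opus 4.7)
The natural strategy is a compactness argument on the unit sphere $S^{n-1}$ followed by a quadratic estimate of $\|x+y\|^2$, with the hypothesis applied antipodally so that elements of the chosen $L$ pull $x$ toward the origin.

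For every $u \in S^{n-1}$, apply the hypothesis to $-u$ to obtain $L_u \in \mathcal F$ with $\langle -u, y\rangle > 0$, equivalently $\langle u, y\rangle \leqslant -\delta_u$ for $\delta_u := \min_{y \in L_u}(-\langle u, y\rangle) > 0$, for all $y \in L_u$. By continuity of the inner product in its first argument, there is an open neighborhood $U_u \subset S^{n-1}$ of $u$ on which $\langle u', y\rangle \leqslant -\delta_u/2$ for every $u' \in U_u$ and every $y \in L_u$. Compactness of $S^{n-1}$ supplies a finite subcover $U_{u_1}, \ldots, U_{u_k}$; setting $\delta := \tfrac12 \min_i \delta_{u_i} > 0$, every unit vector $u$ lies in some $U_{u_i}$ and hence satisfies $\langle u, y\rangle \leqslant -\delta$ for all $y \in L_{u_i}$.

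Now let $M := \max\{\|y\| : y \in L,\ L \in \mathcal F\}$, which is finite since $\mathcal F$ is a finite collection of finite sets. Given $x \in \mathbb R^n$ with $\|x\| \geqslant r$, take $u = x/\|x\|$ and choose the corresponding $L \in \{L_{u_1},\ldots,L_{u_k}\}$. Then for every $y \in L$,
\[\|x+y\|^2 = \|x\|^2 + 2\langle x, y\rangle + \|y\|^2 \leqslant \|x\|^2 - 2\delta\|x\| + M^2.\]
Writing $\rho = \|x\|$, the bound $\rho^2 - 2\delta\rho + M^2 < r^2$ holds whenever $\rho$ lies below the larger root $\rho_+ := \delta + \sqrt{r^2 - M^2 + \delta^2}$; separating the cases $r \leqslant \delta$ and $r > \delta$ (squaring in the latter to clear the radical) shows that $\rho_+ > r$ once $r$ exceeds a threshold $r_0$ depending only on $M$ and $\delta$. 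Setting $\varepsilon(r) := \rho_+ - r > 0$ for $r > r_0$ concludes the argument: elements with $\|x\| < r$ already sit in $B_r$, while those with $r \leqslant \|x\| < r+\varepsilon(r)$ satisfy $x+L \subset B_r$ for the selected $L$. The only delicate step is extracting the uniform positive constant $\delta$ from the pointwise hypothesis; once that is secured the rest is a direct manipulation of the squared-norm expansion.
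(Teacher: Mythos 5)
Your argument is correct and follows the same strategy as the paper's sketched proof: extract a uniform positive lower bound on the inner product via compactness of $S^{n-1}$, then use the squared-norm expansion $\|x+y\|^2 \leqslant \|x\|^2 - 2\delta\|x\| + M^2 < r^2$ to conclude $x+L\subset B_r$ whenever $r\leqslant\|x\|<r+\varepsilon(r)$. The paper obtains its constant $C$ as the infimum of the continuous function $f(u)=\max_L\min_{y\in L}\langle u,y\rangle$ over $S^{n-1}$, whereas you run a finite-subcover argument; these are interchangeable packagings of the same compactness step and give comparable constants (your $\delta$ is within a factor of $2$ of $C$). You are also right to apply the hypothesis to $-u$ rather than to $u$: with the hypothesis written as $\langle x,y\rangle>0$ and ``taken from'' defined via $x+L\subset B_r$, the inner-product condition must be invoked antipodally so that the selected $L$ pulls $x$ toward the origin; your explicit handling of this sign is a small but genuine point that the stated sketch leaves implicit. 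Your use of $M=\max_{L\in\mathcal F}\max_{y\in L}\|y\|$ is also the quantity actually needed to bound $\|y\|^2$ over all of $L$, which is what the quadratic estimate requires.
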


We omit the proof of this lemma but we will use the explicit choice of $r_0$ and $\varepsilon: (r_0,\infty)\to \mathbb R^+$ in the proof of (\cite{bieri1980valuations}, Lemma 1.1). Let $S^{n-1}$ be the unit sphere in $\mathbb R^n$ and consider the function $f:S^{n-1}\to \mathbb R$ given by 
\[f(u)=\max_L \min_y \{\langle u,y\rangle \mid y\in L \in \mathcal F\}, \text{ for } u\in S^{n-1}.\]
The function $f$ is continuous. By the assumption on $\mathcal F$, we have $f(u)>0$ for all $u\in \mathcal S^{n-1}$. Since $S^{n-1}$ is compact, we can define 
\[C=\inf\{f(u)\mid u\in S^{n-1}\}>0, D=\max_L \min_y \{\|y\|\mid y\in L\in \mathcal F\}>0.\]

Then our choice of $r_0$ and $\varepsilon$ are
\[r_0=\frac{D^2}{2C}, \varepsilon(r)=C-\frac{D^2}{2r}.\]

\subsection{A Theorem by R.Bieri and R. Strebel}
\label{finitepresented2}

Let $T$ be a finitely generated abelian group, written multiplicatively. A (real) \emph{character} of $T$ is a homomorphism $\chi: T\to \mathbb R$ of $T$ into the additive group of the field of real numbers $\mathbb R$.  Let $\tor T$ be the torsion subgroup of $T$. Then $T/\tor T\cong \mathbb Z^k\subset \mathbb R^k$ where $k$ is the rank of $T$. We fix a homomorphism $\theta: T\to \mathbb R^k$. For every character $\chi:T\to \mathbb R$, there is a unique $\mathbb R$-linear map $\bar \chi: \mathbb R^k\to \mathbb R$ such that $\chi=\bar\chi \circ \theta $. And by the Riesz representation theorem, there is a unique element $x_\chi\in \mathbb R^k$ such that $\bar\chi (y)=\langle x_\chi,y\rangle, \forall y\in \mathbb R^k$, whence $\chi(t)=\langle x_\chi,\theta(t)\rangle$ \cite{bieri1980valuations}. Therefore each character $\chi$ corresponds a vector $x_\chi$ in $\mathbb R^k$. Conversely, given a vector $x$ in $\mathbb R^k$, we can define a corresponding character by $\chi(t)=\langle x, \theta(t)\rangle$. This will be a useful realization for characters on $T$.

Every character $\chi: T\to \mathbb R$ can be extended to a ``character'' of the group ring $\chi: \mathbb ZT\to \mathbb R\cup\{+\infty\}$ by putting $\chi(0)=+\infty$ and 
\[\chi(\lambda)=\min\{\chi(t)\mid t\in \supp(\lambda)\}, \text{ where }0\neq \lambda\in \mathbb ZT.\]

One can check that $\chi(\lambda\mu)\geqslant \chi(\lambda)+\chi(\mu)$ for all $\lambda, \mu\in \mathbb ZT$. Moreover, if $T$ is free abelian, the group ring $\mathbb ZT$ has no zero divisor. It follows that $\chi(\lambda\mu)= \chi(\lambda)+\chi(\mu)$ in this case \cite{bieri1980valuations}. 

For every $T$-module $A$, the \emph{centralizer} $C(A)$ of $A$ is defined to be 
\[C(A)=\{\lambda \in \mathbb ZT\mid \lambda \cdot a=a, \forall a\in A\}.\]

If $A$ is a left (right) $T$-module then we write $A^*$ for the right (resp. left) $T$-module with $T$-action given by $at=t^{-1}a$ (resp. $ta=at^{-1}$). 

We say a $T$-module $A$ is \emph{tame} if $A$ is finitely generated as a $T$-module and there is a finite subset $\Lambda\subset C(A)\cup C(A^*)$ such that for every non-trivial character $\chi:\mathbb ZT\to \mathbb R$ there is $\lambda\in \Lambda$ with $\chi(\lambda)>0$. 

Robert Bieri and Ralph Strebel proved the following theorem, that characterizes finitely presented metabelian groups.
\begin{theorem}[\cite{bieri1980valuations}, Theorem 5.1]
\label{fp}
    Let $G$ be a finitely generated group and let $A\triangleleft G$ be a normal subgroup such that both $A$ and $T=G/A$ are abelian. Then $G$ is finitely presented if and only if $A$ is tame as a $T$-module.
\end{theorem}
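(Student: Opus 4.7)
The plan is to prove the ``if'' direction announced by the excerpt: if $A$ is tame as a $T$-module then $G$ is finitely presented. The converse follows from a separate valuation argument (given a non-finite-presentation obstruction, one extracts a character $\chi$ whose associated HNN splitting of $G$ has infinitely generated edge groups, contradicting tameness) and is taken as known. My approach is to write down an explicit candidate finite presentation and reduce the verification to a Noetherian induction whose geometric content is exactly \cref{geometric}.

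For the setup, lift a finite generating set $t_1, \ldots, t_k$ of $T$ to $G$, pick $T$-module generators $a_1, \ldots, a_m$ of $A$, and let $X = \{t_1, \ldots, t_k, a_1, \ldots, a_m\}$. Take $R_0$ to be the finite set consisting of: (a) a finite presentation of $T$ lifted through the section; (b) action relations $t_i a_j t_i^{\pm 1} = w_{ij}^{\pm}(\vec a)$ expressing each conjugate as a word in the $a_\ell$; (c) the commutator relations $[a_i, a_j] = 1$ together with finitely many further commutator relations $C(u_s, v_s) := [u_s a_i u_s^{-1}, v_s a_j v_s^{-1}] = 1$ for $(u_s, v_s)$ in a fixed compact region of $T \times T$; (d) centralizer relations $\lambda \cdot a_j = a_j$ for each $\lambda \in \Lambda \cap C(A)$ and each $j$, plus the analogues coming from $C(A^*)$; and (e) finitely many generators (available by Noetherianity of $\mathbb Z T$) of the syzygy module $\{(\lambda_1, \ldots, \lambda_m) \in (\mathbb Z T)^m : \sum \lambda_i a_i = 0 \text{ in } A\}$. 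Put $\widetilde G = \langle X \mid R_0 \rangle$ with canonical surjection $\pi : \widetilde G \to G$; the claim is that $\pi$ is an isomorphism.

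Once one shows that the normal closure $\widetilde A$ of $\{a_j\}$ in $\widetilde G$ is abelian, relations (b) and (e) upgrade the surjection $\widetilde A \to A$ to an isomorphism of $\mathbb Z T$-modules, making $\pi$ injective. The crux is therefore to derive every $C(u, v) = 1$ from $R_0$. For $\lambda = \sum_t \lambda(t)\, t \in \Lambda \cap C(A)$ with a chosen pivot $t_0 \in \supp(\lambda)$, substituting the relation $\lambda \cdot a_j = a_j$ for $a_j$ inside the second slot of $C(u, v)$ rewrites $C(u, v)$, modulo ``smaller'' auxiliary commutators already available from earlier stages of the induction, as a combination of the $C(u, v t_k)$ for $t_k \in \supp(\lambda)$; the symmetric move using $C(A^*)$ shifts the first coordinate. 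Let $\mathcal F$ consist of the translate-sets $\{\theta(t_k) - \theta(t_0) : t_k \in \supp(\lambda) \setminus \{t_0\}\}$ embedded into each of the two coordinate blocks of $\mathbb R^{2k}$, as $\lambda$ ranges over $\Lambda$ and $t_0$ over $\supp(\lambda)$. Tameness is exactly the hypothesis of \cref{geometric}: for every nonzero $x \in \mathbb R^{2k}$ some $L \in \mathcal F$ pairs positively with $x$. Applying \cref{geometric} produces $r_0$ such that every lattice point $(u, v)$ outside $B_{r_0}$ can be ``pulled in'' by a move of the above type, and Noetherian induction on $\|\theta(u, v)\|$, with the base case covered by (c), completes the derivation.

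The main obstacle lies in the bookkeeping at the inductive step. The rewriting $C(u, v) = \prod_k C(u, v t_k)^{\lambda(t_k)}$ is a group-theoretic manipulation and not a formal module identity; during the expansion one picks up additional commutators between conjugates at the various shifted positions, and these must already be known to equal $1$ in $\widetilde G$ for the manipulation to be valid. I would handle this by running the induction on a well-order on $T \times T$ compatible with and strictly finer than the Euclidean norm, chosen so that every auxiliary commutator appearing at the rewriting step has been proven $1$ at an earlier stage. The strict decrease $r + \varepsilon(r) \to r$ produced by \cref{geometric} is precisely what makes this well-order well-founded, and its quantitative character dictates how large the compact region of base commutators in (c) must be chosen.
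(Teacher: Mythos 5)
The statement is quoted from Bieri--Strebel and the paper does not prove it; Section~\ref{finitepresented} only reproduces a sketch of the ``if'' direction (\cref{OD}), which is also the direction your proposal focuses on. Your high-level architecture---pin down commutation relations $[a^u,b^v]=1$ on a compact region, include the centralizer relations coming from $\Lambda$, and propagate commutativity outward by a Noetherian induction whose geometry is \cref{geometric}---is the Bieri--Strebel strategy the paper follows. You finish by putting the syzygy generators of $A$ into the presentation directly, whereas the paper first shows $G_\infty$ (with $\mathcal R_1\cup\mathcal R_2$ and the $\Lambda$-relations) is finitely presented via Lemma~\ref{finiteness}, then kills $\ker(G_\infty\twoheadrightarrow G)$ using Hall's maximal condition; both endings are sound.

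There is, however, a genuine gap in your choice of $\mathcal F$. Your rewriting step substitutes $\lambda\cdot a_j=a_j$ into the second slot of $C(u,v)$; conjugating that relation by $v$ replaces $a_j$ at position $v$ by a combination of $a_j$ at positions $vt$ for $t\in\supp(\lambda)$, so the shifts that actually occur are by $\theta(t)$, $t\in\supp(\lambda)$---that is, by the set $L_\lambda=\theta(\supp\lambda)$ used in the paper. You then feed \cref{geometric} the translate-sets $\{\theta(t_k)-\theta(t_0):t_k\in\supp(\lambda)\setminus\{t_0\}\}$ and assert ``tameness is exactly the hypothesis.'' It is not. Tameness produces, for each nonzero character $\chi_x$, a $\lambda$ with $\langle x,\theta(t)\rangle>0$ for \emph{every} $t\in\supp(\lambda)$, which is precisely the strict positivity required for $L_\lambda$. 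It does \emph{not} produce a pivot $t_0$ that strictly minimizes $\langle x,\theta(\cdot)\rangle$ over $\supp(\lambda)$; ties among support elements are entirely possible and kill strict positivity for your translate-sets, and nothing in tameness rules them out. So there is an internal mismatch: the shifts your rewriting step uses are $\theta(\supp\lambda)$, the sets you hand to \cref{geometric} are translate differences, and only the former is certified by tameness. Replace $\mathcal F$ by $\{\theta(\supp\lambda):\lambda\in\Lambda\}$ (as the paper does) and the geometric input becomes exactly tameness. A secondary simplification: working in $\mathbb R^{2k}$ on pairs $(u,v)$ is unnecessary, since $[a^u,b^v]$ is freely conjugate to $[a,b^{vu^{-1}}]$, so one-variable relations in $\mathbb R^k$ suffice; this also avoids the bookkeeping you flag as the main obstacle, which the paper absorbs into the auxiliary-group lemmas collected in \cref{HP}.
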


For our purpose, let us sketch the proof of the ``if" part of this theorem, more precisely

\begin{theorem}[\cite{bieri1980valuations}, Theorem 3.1]
\label{OD}
	If $T$ is a finitely generated abelian group and $A$ is a tame $T$-module, then every extension of $A$ by $T$ is finitely presented.
\end{theorem}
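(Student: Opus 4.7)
The plan is to write down an explicit finite presentation of $G$ whose relations are forced by the tameness data, and then verify correctness by reducing every potential relation in the module $A$ to a bounded ``ball'' of exponents, where only finitely many independent constraints can appear.

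\emph{Step 1 (Candidate presentation.)} First I would fix a finite generating set $t_1,\dots,t_k$ for $T$, together with a finite abelian presentation of $T$, and a finite $T$-module generating set $a_1,\dots,a_m$ for $A$. Because $T$ is finitely generated abelian, its group ring $\mathbb ZT$ is a quotient of the Laurent polynomial ring $\mathbb Z[t_1^{\pm 1},\dots,t_k^{\pm 1}]$ and is therefore Noetherian; hence $A$ is finitely presented as a $\mathbb ZT$-module, say by relations $\rho_1,\dots,\rho_q$. The candidate finite presentation $\mathcal P$ of $G$ has generators $\{a_i,t_j\}$ together with four packets of relations: (i) the finitely many abelian relations defining $T$; (ii) the module relations $\rho_1,\dots,\rho_q$, written as group words using the convention $a_i^{s}$ for the $s$-conjugate; (iii) for each $\lambda\in\Lambda$ and each $a_i$, the group-theoretic translation of $\lambda\cdot a_i=a_i$ (or of $a_i\cdot\lambda=a_i$ if $\lambda\in C(A^*)$); and (iv) the finitely many commutators $[a_i^{t^\alpha},a_j^{t^\beta}]=1$ for multi-indices $\alpha,\beta$ lying in a ball of radius $r_0$ to be fixed below.

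\emph{Step 2 (Reducing to a module statement.)} Let $\tilde G$ be the group defined by $\mathcal P$ and let $\pi\colon\tilde G\to G$ be the natural surjection. Write $\tilde A$ for the normal closure of $\{a_1,\dots,a_m\}$ in $\tilde G$. Relations (i) identify $\tilde G/\tilde A$ with $T$, so it remains to prove that $\pi$ is injective on $\tilde A$. My aim is to show that relations (iii) and (iv) together force $\tilde A$ to be abelian; once abelian, $\tilde A$ becomes a $\mathbb ZT$-module surjecting onto $A$, and the injectivity question reduces to checking that the kernel is annihilated by the $\rho_j$ from (ii), which is automatic from the construction.

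\emph{Step 3 (The geometric core.)} The real work is to propagate commutativity and all module identities from the bounded ball $B_{r_0}$ to all of $T$. For each $\lambda\in\Lambda\cap C(A)$, writing $\lambda-1=\sum n_s s$, the identity $(\lambda-1)\cdot a_i=0$ lets one rewrite $a_i^{t}$ for $t$ at a ``corner'' of $\supp(\lambda-1)+t_0$ as a product of the $a_i^{t'}$ with $t'$ strictly closer to the origin along any character detecting $\lambda$. Tameness guarantees that the finite collection $\mathcal F=\{\supp(\lambda-1):\lambda\in\Lambda\}$ (symmetrised to absorb $C(A^*)$) satisfies the positive-inner-product hypothesis of \cref{geometric}, so I would take $r_0$ to be the radius provided by that lemma. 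Iterated application of relations of type (iii) then expresses every $a_i^t$ with $\|\theta(t)\|>r_0$ as a word in $\{a_j^{t'}:\|\theta(t')\|\leqslant r_0\}$ inside $\tilde G$. Applied to a commutator $[a_i^{\alpha},a_j^{\beta}]$, the same rewriting reduces it inductively to commutators from packet (iv); hence $\tilde A$ is abelian. Finally, any element of $\ker(\mathbb ZT^m\to A)$ involves only finitely many terms, each of which can be pushed into $B_{r_0}$, so it must already lie in the submodule generated by $\rho_1,\dots,\rho_q$.

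\emph{Main obstacle.} The delicate step is Step 3: I need to choose $r_0$ large enough that the reductions for every $\lambda\in\Lambda$ and every generator $a_i$ run simultaneously; I need the rewriting to respect the non-commutative ambient group $\tilde G$, so that conjugating an already-verified relation by a further $t_j$ does not manufacture new obligations outside the bounded ball; and I need to absorb $\tor T$ cleanly by pre-splitting $T\cong\mathbb Z^k\times\tor T$ and treating the torsion component as a finite bookkeeping attached to each lattice point. Once these fit together, $\pi|_{\tilde A}$ is an isomorphism, so $\pi$ itself is, and $G$ is finitely presented by $\mathcal P$.
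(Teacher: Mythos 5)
Your plan follows the same overall strategy as the paper: write a candidate finite presentation with (a) relations controlling the quotient $T$, (b) commutativity of $\mathcal A$-conjugates inside a fixed ball, and (c) relations encoding the tameness data from $\Lambda$, then use \cref{geometric} to propagate commutativity from the ball $B_{r_0}$ to all of $T$. The geometric core you describe in Step~3 is exactly the content of \cref{HP} (Lemmas~3.2 and~3.4 of Bieri--Strebel) and \cref{finiteness} (Lemma~3.5), and you correctly flag the delicate point: the rewriting lives in a group whose abelianness is precisely what is being established, so one must control how conjugating by $t_j$ pushes exponents back into the ball. This is the main content the paper has to verify and you do not attempt to carry it out, but the structure you propose is the right one.

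There are two places where your route diverges from, and is slightly weaker than, the paper's. First, your packet~(ii): you include a finite module presentation $\rho_1,\dots,\rho_q$ of $A$ as relators and aim to prove directly that the resulting group $\tilde G$ equals $G$. The paper instead omits the $\rho_j$ entirely, constructs a ``universal'' group $G_\infty$ with only the $\mathcal R_1\cup\mathcal R_2\cup\mathcal R_3$ relations, shows $G_\infty$ is finitely presented (Lemma~3.5), and then invokes Hall's theorem that finitely generated metabelian groups satisfy the maximal condition on normal subgroups: hence $\ker(G_\infty\twoheadrightarrow G)$ is the normal closure of a finite set and $G$ is automatically finitely presented. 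This sidesteps having to verify how the $\rho_j$ interact with the other relations, and in particular avoids the subtle question of what $\mathbb Z T$-module $\tilde A$ actually is as an abstract quotient (your assertion that ``the injectivity question\ldots is automatic from the construction'' glosses over exactly this). Second, your packet~(i), ``the finitely many abelian relations defining $T$,'' cannot literally consist of relators of $T$ (e.g.\ $[t_i,t_j]=1$), because $G$ is an arbitrary, possibly non-split, extension and in $G$ the commutator $[t_i,t_j]$ is some nontrivial element of $A$. The paper handles this by demanding $\mathcal A$ contain all $a_{ij}=[t_i,t_j]$ and taking the relations $[t_i,t_j]=a_{ij}$; you need the same modification (and an analogous one for the torsion relators $t_l^{m_l}$ if you keep the torsion inside $T$ rather than passing to a finite-index $G_1$ as the paper does).

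In short: morally the same proof, with the geometric lemma as the engine. The paper's use of Hall's max-$n$ property to finish is a genuine simplification you should adopt; and your packet~(i) needs to be rewritten so that the putative relators actually hold in $G$.
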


To prove \cref{OD}, we have to introduce some preliminary concepts in order to provide a reasonable sketch. We first define ordered and semi-ordered words. Let $F$ be the free group freely generated by $\mathcal T=\{t_1,\dots,t_k\}$. Let $\bar F\subset F$ denote the subset of all \emph{ordered} words of $F$, i.e.
\[\bar F=\{t_1^{m_1}t_2^{m_2}\dots t_k^{m_k}\mid m_1,\dots,m_k\in \mathbb Z\}.\]
If $w\in F$, we write $\bar w$ as the unique word from $\bar F$ representing $w$ modulo the derived subgroup $F'$. In addition, a word $w\in F$ is said to be \emph{semi-ordered} if it is of the form
\[w=t_{\sigma(1)}^{m_1}t_{\sigma(2)}^{m_2}\dots t_{\sigma(k)}^{m_k}\]
where $\sigma$ is a permutation of the symbols $\{1,\dots,k\}$. 

Let $\theta: F\to \mathbb R^k$ be the homomorphism given by 
\[\theta(t_i)=(\delta_{i1},\dots,\delta_{ik}),\]
for $1\leqslant i \leqslant k$. For every $w\in F$ define the \emph{trace} $\Tr{w}\subset \mathbb R^n$ as follows: if 
\[w=s_1s_2\dots s_m, \text{ where } s_j\in \mathcal T\cup \mathcal T^{-1},\]
is freely reduced, then 
\[\Tr(w)=\{\theta(s_1\dots s_j)\mid j=0,1,\dots, m\},\] 

Next, we define a sequence of auxiliary groups. Let $\mathcal A$ be a finite set and choose an assignment picking an element $a_{ij}\in \mathcal A$ for every pair of integers $(i,j)$ with $1\leqslant i<j \leqslant k.$ For every $r\in \mathbb R^+\cup \{+\infty\}$, let $H_r$ be the group generated by the set $\mathcal A\cup \mathcal T$ with the following defining relations.
\begin{align}
	&[t_i,t_j]=a_{ij}, \text{ for } 1\leqslant i<j\leqslant k, \\
	&[a,b^u]=1, \text{ for } a,b\in \mathcal A, u\in \bar F \text{ with } \|\theta(u)\|<r.
\end{align}

We have some useful properties for the group $H_r$
\begin{proposition}
\label{HP}
	If $r\in \mathbb R^+$, then 
	\begin{enumerate}[(a)]
		\item (\cite{bieri1980valuations}, Lemma 3.2) $a^{\bar w}=a^w$ for every $a\in \mathcal A$ and every $w\in F$ with $\Tr(w)\subset B_r$.
		\item (\cite{bieri1980valuations}, Lemma 3.4a) For $u,v\in F$ such that 
        \[\Tr(u)\subset B_r,\Tr(v)\subset B_r,\|\theta(uv)\|<r.\]
        Then $[a,b^{uv}]$ and $[a,b^{\overline{uv}}]$ are conjugate in $H_r$ for every $a,b\in \mathcal A$.
        \item (\cite{bieri1980valuations}, Lemma 3.4b) Assume $r>2k$. Let $u,v$ be semi-order words in $F$ such that 
        \[\|\theta(u)\|\leqslant \frac{r}{2k},\|\theta(v)\|\leqslant r+\frac{1}{2k},\|\theta(uv)\|<r\]
        are satisfied. Then $[a,b^{uv}]$ and $[a,b^{\overline{uv}}]$ are conjugate in $H_r$ for every $a,b\in \mathcal A$.
	\end{enumerate}
\end{proposition}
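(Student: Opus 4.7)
All three statements rest on a common mechanism. The identity $xy=yx[x,y]$, together with the defining relation $[t_i,t_j]=a_{ij}$, lets us \emph{sort} any word of $F$ into an element of $\bar F$ at the cost of introducing extra factors of the form $a_{ij}^{\pm h}$ conjugated by various tails $h$. The trace hypotheses in (a)--(c) are designed so that each such $h$ has $\|\theta(h)\|<r$; relation~(2) then guarantees that $a_{ij}^{\pm h}$ commutes with every element of $\mathcal A$, so the introduced factors either cancel outright (giving the equality in (a)) or can be pulled out of a commutator bracket (giving the conjugacy in (b), (c)).

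For (a), I would induct on the length $m$ of $w=s_1\cdots s_m$, proving by a nested induction on $j$ that $a^{s_1\cdots s_j}=a^{\overline{s_1\cdots s_j}}$ in $H_r$ for every $0\leqslant j\leqslant m$. In the inductive step, multiplying the previously sorted word $\overline{s_1\cdots s_j}=t_1^{m_1}\cdots t_k^{m_k}$ on the right by $s_{j+1}=t_i^{\pm 1}$ produces a word that must be reordered using $t_l t_i=t_it_l\cdot a_{il}^{-1}$ (for $i<l$) and its analogues. Each elementary swap introduces a factor $a_{il}^{\pm h}$ whose conjugator $h$ is a suffix of an ordered word whose endpoint lies on the trace of a prefix of $w$, so $\|\theta(h)\|<r$ by the hypothesis $\Tr(w)\subset B_r$. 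Relation~(2) then forces $a_{il}^{\pm h}$ to commute with $a^{\overline{s_1\cdots s_j}}$, so each new factor can be cancelled and the induction closes.

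For (b), first use (a) on $u$ to rewrite $b^{uv}=b^{\bar u v}$. The words $\bar u\cdot v$ and $\overline{uv}$ differ by a product of commutators in $F'$, and applying the sorting procedure of (a) to $\bar u v$ gives a decomposition $\bar u v=\overline{uv}\cdot c$ with $c$ a product of factors $a_{ij}^{\pm h}$; the combined hypotheses $\Tr(v)\subset B_r$ and $\|\theta(uv)\|<r$ keep every $h$ inside $B_r$, so $c$ centralizes $\mathcal A$. Consequently $b^{\bar u v}=b^{\overline{uv}}\cdot c'$ with $c'$ central, and $[a,b^{uv}]$ is conjugate to $[a,b^{\overline{uv}}]$ in $H_r$.

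Part (c) is the main obstacle, because the traces of the semi-ordered $u$ and $v$ need not lie entirely in $B_r$ and $v$ may temporarily reach norm slightly above $r$. The plan is to exploit the semi-ordered structure: the trace of a semi-ordered word is a concatenation of $k$ axis-parallel segments, and the norm bounds $\|\theta(u)\|\leqslant r/(2k)$, $\|\theta(v)\|\leqslant r+1/(2k)$, $\|\theta(uv)\|<r$ let us split $v$ into blocks short enough that, at every stage of the sort, the intermediate tails still satisfy $\|\theta(h)\|<r$; this is verified by a direct triangle-inequality computation using the fact that the letters of a semi-ordered word contribute one coordinate direction at a time. Once this bookkeeping is in place, the sort-and-cancel scheme of (a) again produces the required conjugating element and completes the proof.
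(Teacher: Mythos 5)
The paper does not actually prove Proposition~\ref{HP}: all three parts are cited verbatim from Bieri--Strebel \cite{bieri1980valuations} (their Lemmas 3.2, 3.4a, 3.4b), so there is no in-paper argument to compare against, and your sketch has to be judged on its own. The high-level mechanism you describe (sort the conjugator one letter at a time, absorbing the commutator corrections via relation~(2)) is indeed the right one, but the step where you ``absorb'' a correction hides a genuine circularity. Relation~(2) only asserts $[a,b^u]=1$ for $a,b\in\mathcal A$ and an \emph{ordered} $u\in\bar F$ with $\|\theta(u)\|<r$; taking $b=a_{il}$ and $u=h$ it gives $[a,a_{il}^{\pm h}]=1$, i.e.\ the correction commutes with $a$. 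What you actually need is that the correction commutes with the already-conjugated element $a^{\overline{s_1\cdots s_j}}$, and $[a^{\overline{s_1\cdots s_j}},a_{il}^{\pm h}]=[a,a_{il}^{\pm h(\overline{s_1\cdots s_j})^{-1}}]^{\overline{s_1\cdots s_j}}$, whose inner conjugator $h(\overline{s_1\cdots s_j})^{-1}$ is \emph{not} an ordered word. Turning it into an ordered word is exactly the statement of part~(a), which is what you are still trying to prove; and even after reordering, its $\theta$-image is $\theta(h)-\theta(s_1\cdots s_j)$, whose norm is not controlled by the bounds $\|\theta(h)\|<r$ and $\Tr(w)\subset B_r$ alone.

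Your norm bookkeeping is correct as far as it goes (a suffix of an ordered word has $\theta$-norm bounded by that of the whole word, because it is a coordinate projection), but it bounds the wrong quantity, and it is silent about where the very specific constants in part~(c) --- $r>2k$, $\|\theta(u)\|\le r/2k$, $\|\theta(v)\|\le r+1/2k$ --- come from. Those asymmetric thresholds are the visible residue of a careful trace estimate in Bieri--Strebel, and a proof must explain them rather than gesture at ``a direct triangle-inequality computation.'' To repair the sketch you would need both to exhibit the precise conjugators arising at each elementary swap and check $\|\theta(\cdot)\|<r$ for each, and to restructure the induction (e.g.\ on a suitable norm-type invariant rather than on word length) so that the reordering of the inner conjugator $h(\overline{s_1\cdots s_j})^{-1}$ can be supplied by an already-established case rather than by the very lemma under proof.
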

	
For $r=\infty$, $H_\infty$ is metabelian and it is an extension of a free abelian group by another free abelian group. In fact, by the definition of $H_\infty$ and \cref{HP} (a), we have $[a^u,b^v]=1$ for any $u,v\in F, a,b\in \mathcal A$ and $a^u$ is of infinite order. Then $\llangle \mathcal A \rrangle_{H_\infty}$ is free abelian of infinite rank with basis $\{a^u\mid a\in \mathcal A,u\in \bar F\}$. $H_\infty/\llangle \mathcal A\rrangle$ is generated by $\mathcal T$. It is abelian since we includes all commutators $[t_i,t_j]$ in $\mathcal A$. Each $t_i$ is of infinite order. It follows that $H_\infty/\llangle \mathcal A\rrangle$ is also free abelian. But let us emphasis this: $H_\infty$ is infinitely related.

Now back to the proof of \cref{OD}. We first claim that the problem can be reduced to the case when $T$ is a free abelian group. Let $\pi: G\to T$ be the epimorphism and $T_1\leqslant $ T be a complement of the torsion subgroup of $T$. Then $G_1=\pi^{-1}(T_1)$ has finite index in $G$ and $G_1$ is an extension of an abelian group by a finitely generated free abelian group. $G$ is finitely presented if and only if $G_1$ is finitely presented. Moreover, if $A$ is a tame $T$-module, then $A$ is also a tame $T_1$-module (\cite{bieri1980valuations}, Prop. 2.5). Therefore, the statement of \cref{OD} is true for $G$ if and only if it is true for $G_1$.

Now we assume that $T$ is a free abelian group of rank $k$, $A$ is a tame $T$-module, and $G$ is an extension of $A$ by $T$. Denote $\pi: G \twoheadrightarrow T$ to be the epimorphism such that $A\cong \ker \pi$. 

Let $\mathcal T=\{t_1,\dots,t_k\}$ be a subset of $G$ such that $\{\pi(t_1),\dots, \pi(t_k)\}$ forms a basis of $T$ and $\mathcal A$ be a finite subset of $A$ containing all commutators $a_{ij}=[t_i,t_j]$ for $1\leqslant i<j\leqslant k$ and generating $A$ as a $T$-module. We write $\hat w\in T$ for the image of $w\in F$ under $\pi$.

Since $A$ is a tame $T$-module. Then there is a finite subset $\Lambda\subset C(A)\cup C(A^*)$ with the property that for every character $\chi: T\to \mathbb R$, there exists $\lambda\in \Lambda$ such that $\chi(\lambda)>0$. Recall that $F:=F(\mathcal T)$ and $\bar F$ is the set of ordered words of $F$. For every $r\in (0,+\infty]$, we define the group $G_r$ to be given by generators $\mathcal A\cup \mathcal T$ and defining relations
\begin{align}
	[t_i,t_j]&=a_{ij}, &\text{for } 1\leqslant i<j\leqslant k,\label{commutative1}\\
	[a,b^u]&=1, &\text{for $a,b\in \mathcal A,u\in \bar F$ with $\|\theta(u)\|<r$},\label{commutative2}\\
	\prod_{u\in \bar F} (a^{\lambda(\hat u)})^u&=a, &\text{for } a\in \mathcal A, \lambda\in \Lambda\cap C(A), \label{action1}\\
	\prod_{u\in \bar F} (a^{\lambda(\hat u)})^{u^{-1}}&=a, &\text{for } a\in \mathcal A, \lambda\in \Lambda\cap C(A^*). \label{action2}
\end{align}
In relations (\ref{action1}) and (\ref{action2}), we regard $\lambda$ as a finite supported function from $T$ to $\mathbb Z$. Hence $\lambda(\hat u)$ is just the value of $\lambda$ at $\hat u$.

$G_r$ is finitely presented if $r\neq +\infty$. If $r=\infty$, although $G_\infty$ is not finitely presented, it is metabelian once we realize $G_\infty$ is a factor group of $H_\infty$.
For each $\lambda\in \Lambda$, $\theta(\supp(\lambda))$ is a finite subset of $\mathbb R^k$, denoted by $L_\lambda$. Let $\mathcal F=\{L_\lambda \mid \lambda\in \Lambda\}$. As previous discussion, there is a one-to-one correspondence between each character $\chi:T\to \mathbb R$ and a linear functional $\langle v_\chi, \cdot \rangle$. Therefore if $A$ is tame, $\mathcal F$ is a collection of finite sets which satisfies assumptions of \cref{geometric}.

Let
\[C=\inf_{u\in S^{n-1}} \max_{\lambda\in \Lambda} \min_{y\in L_\lambda}\{\langle u,y\rangle\},D=\max_{\lambda\in \Lambda}\min_{y\in L_\lambda}\{\|y\|\}.\]
In addition, let $R=2k\max\{D,D^2/2C\}$. We have the following lemma
\begin{lemma}[\cite{bieri1980valuations}, Lemma 3.5]
\label{finiteness}
For $r\in [R,\infty)\cup \{\infty\}$, $G_r\cong G_R$. In particular, $G_\infty$ is finitely presented. 
\end{lemma}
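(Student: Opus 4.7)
For every $r \geqslant R$, the identity map on $\mathcal A \cup \mathcal T$ induces a natural epimorphism $\pi_r : G_R \twoheadrightarrow G_r$, since each defining relation of $G_R$ is among the defining relations of $G_r$. Hence $\pi_r$ is an isomorphism iff every defining relation of $G_r$ also holds in $G_R$. Relations (\ref{commutative1}), (\ref{action1}), (\ref{action2}) are identical in the two presentations, so the whole task reduces to proving that $[a, b^u] = 1$ in $G_R$ for all $a, b \in \mathcal A$ and every $u \in \bar F$ with $\|\theta(u)\| < r$. Taking $r = \infty$ then yields the ``in particular'' clause. My plan is to establish this commutativity by a real-valued induction on a parameter $s$, with the statement
\[
\mathcal P(s):\quad [a, b^u] = 1 \text{ in } G_R \text{ for all } a, b \in \mathcal A \text{ and } u \in \bar F \text{ with } \|\theta(u)\| < s.
\]
The base case $\mathcal P(R)$ is exactly (\ref{commutative2}). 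Since $\varepsilon(s) = C - D^2/(2s)$ is increasing on $[r_0, \infty)$, $R \geqslant r_0$, and $\varepsilon(s) \to C > 0$, iterating $s \mapsto s + \varepsilon(s)$ from $s = R$ exceeds every prescribed real value in finitely many steps, so the inductive step $\mathcal P(s) \Rightarrow \mathcal P(s + \varepsilon(s))$ suffices.

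For the step, fix $u \in \bar F$ with $s \leqslant \|\theta(u)\| < s + \varepsilon(s)$. I apply \cref{geometric} to the collection $\mathcal F = \{L_\lambda\}_{\lambda \in \Lambda}$---whose hypothesis is met exactly because $A$ is a tame $T$-module---to obtain $\lambda \in \Lambda$ with $\theta(u) + L_\lambda \subset B_s$, i.e.\ $\|\theta(wu)\| < s$ for every $w$ in the (ordered-word version of the) support of $\lambda$. Assume $\lambda \in C(A)$; the case $\lambda \in C(A^*)$ is symmetric, using (\ref{action2}) and $u^{-1}$ in place of (\ref{action1}) and $u$. Conjugating (\ref{action1}) by $u$ in $G_R$ gives
\[
b^u = \prod_{w \in \bar F}(b^{wu})^{\lambda(\hat w)}.
\]
Expanding $[a, b^u]$ through the identities $[x, yz] = [x, z]\,[x, y]^z$ and $[x, y^n] = \prod_{i=0}^{n-1}[x, y]^{y^i}$ writes it as a product of conjugates of commutators $[a, b^{wu}]$ with $w \in \supp \lambda$. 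Now $\mathcal P(s)$ supplies every defining relation of $H_s$ in $G_R$, so $G_R$ factors through $G_s$; combined with the calibration $R = 2k\max\{D, D^2/(2C)\}$ (which yields $\|\theta(w)\| \leqslant D \leqslant R/(2k) \leqslant s/(2k)$), the hypotheses of \cref{HP}(c) are satisfied with parameter $s$, giving that $[a, b^{wu}]$ is conjugate in $G_R$ to $[a, b^{\overline{wu}}]$. Since $\overline{wu} \in \bar F$ and $\|\theta(\overline{wu})\| = \|\theta(wu)\| < s$, $\mathcal P(s)$ gives $[a, b^{\overline{wu}}] = 1$, hence $[a, b^{wu}] = 1$; reassembling the expansion, $[a, b^u] = 1$ in $G_R$, which is $\mathcal P(s + \varepsilon(s))$.

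The main obstacle is that the relations (\ref{commutative2}) in $G_R$ only encode commutativity inside $B_R$, while the rewriting forced by (\ref{action1}) produces conjugates $b^{wu}$ whose internal trace $\Tr(wu)$ can stray far outside $B_R$. The calibration of $R$ in terms of $C$ and $D$, together with \cref{HP}(c) (whose norm conditions on the endpoints replace the more stringent trace condition of \cref{HP}(a)), is precisely what permits the substitution $b^{wu} \rightsquigarrow b^{\overline{wu}}$ once the inductive hypothesis has promoted $G_R$ to a quotient of $H_s$. Carefully tracking how the commutator identities distribute over the fixed-order, non-abelian product in $G_R$, and verifying that the product ordering in (\ref{action1}) remains coherent under conjugation by $u$, accounts for the bulk of the technical bookkeeping.
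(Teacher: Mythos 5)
Your overall architecture is the right one and, as far as one can tell, matches the Bieri--Strebel argument this lemma is cited from: identify the natural surjection $G_R \twoheadrightarrow G_r$, reduce the claim to showing that the extra commutativity relations $[a,b^u]=1$ of $G_r$ already hold in $G_R$, and prove that by a ``real induction'' in which the geometric lemma (\cref{geometric}) produces, from $\theta(u)$ just outside $B_s$, a $\lambda\in\Lambda$ whose support pulls $\theta(u)$ back into $B_s$; one then rewrites $b^u$ via \eqref{action1} or \eqref{action2} and uses \cref{HP}(c) to trade $[a,b^{wu}]$ for $[a,b^{\overline{wu}}]$, which the inductive hypothesis kills. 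The paper itself does not reprove this lemma (it cites \cite{bieri1980valuations}, Lemma 3.5), but the same engine is run explicitly in the proof of \cref{conjugate}, and your plan parallels it.

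There is, however, a genuine gap in your inductive step, and it is exactly at the point where you declare ``the hypotheses of \cref{HP}(c) are satisfied with parameter $s$.'' In \cref{HP}(c), the outer conjugator $v$ must satisfy $\|\theta(v)\|\leqslant s + \frac{1}{2k}$. You take $v=u$ with only $\|\theta(u)\| < s + \varepsilon(s)$, and $\varepsilon(s)=C - D^2/(2s)$ tends to $C$; nothing forces $C\leqslant \frac{1}{2k}$, so for large enough $s$ (or even for all $s\geqslant R$, depending on the constants) the window $[s+\frac{1}{2k},\, s+\varepsilon(s))$ is nonempty and your step leaves those $u$ uncovered. This is why the paper's parallel argument in \cref{conjugate} advances in fixed increments of $\frac{1}{2k}$ rather than by $\varepsilon(s)$, and calibrates $R$ so that $\varepsilon(r)\geqslant \frac{1}{2k}$ for $r\geqslant R$ (so the geometric lemma still reaches $B_{s+1/(2k)}$). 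The repair to your proof is correspondingly small: replace the step $\mathcal P(s)\Rightarrow \mathcal P(s+\varepsilon(s))$ by $\mathcal P(s)\Rightarrow\mathcal P\bigl(s+\min\{\varepsilon(s),\tfrac{1}{2k}\}\bigr)$. The geometric lemma applies because the increment is $\leqslant\varepsilon(s)$, \cref{HP}(c) applies because the increment is $\leqslant \frac{1}{2k}$, and since $\varepsilon$ is increasing with $\varepsilon(R)>0$ the increments stay bounded below by $\min\{\varepsilon(R),\tfrac{1}{2k}\}>0$, so the iteration still exceeds any prescribed $r$ in finitely many steps. (You should also note, as a minor point, that \cref{HP}(c) requires $s>2k$, which you do not check; and that the bound $\|\theta(w)\|\leqslant D$ for every $w\in\supp\lambda$ requires reading $D$ as $\max_{L}\max_{y\in L}\|y\|$, as the proof of \cref{geometric} in fact forces, rather than the $\max\text{-}\min$ printed in the text.)

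One further imprecision: you write ``$\mathcal P(s)$ supplies every defining relation of $H_s$ in $G_R$, so $G_R$ factors through $G_s$.'' What is needed (and what is true) is that $G_R$ is a \emph{quotient} of $H_s$, so that the conjugacy produced by \cref{HP}(c) inside $H_s$ descends to $G_R$; the phrase ``factors through $G_s$'' suggests a map in the wrong direction.
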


Since relations (\ref{commutative1})-(\ref{action2}) hold in $G$, then $G$ is a factor group of $G_\infty$. The epimorphism $\varphi: G_\infty \to G$ is induced by the identity map on $\mathcal A\cup \mathcal T$. By the fact that the normal subgroup of a finitely generated metabelian group is normal closure of a finite set \cite{Hall1954}, $G$ is finitely presented. Thus we finished the proof of \cref{OD}.
 
In summary, given a tame $T$-module $A$, any extension of $A$ by $T$ is always a factor group of $G_\infty$.  $G_\infty$ is finitely presented and the defining relations are given by (\ref{commutative1})-(\ref{action2}) for any fixed positive real number $r\geqslant R$.

\section{Main Theorem}
\label{maintheorem}

\subsection{Preparation}
\label{maintheorem1}

Given a finitely presented metabelian group $G$ with the short exact sequence
\[1\to A \hookrightarrow G \twoheadrightarrow T \to 1, \]
such that $A, T$ are abelian and the torsion-free rank of $T$ is minimized. Since $G$ is finitely presented, in particular, it is finitely generated. Then $T$ is a finitely generated abelian group, and $A$ is finitely generated as a $T$-module (See \cite{Hall1954}). 

We can do the same trick as in \cref{finitepresented} to reduce the problem to a simpler case. Denote $\pi: G \twoheadrightarrow T$ to be the epimorphism such that $A\cong \ker \pi$. Let $T_1\leqslant T$ be the complement of the torsion subgroup of $T$. $G_1=\pi^{-1}(T_1)$ has finite index in $G$ then $G_1$ is quasi-isometric to $G$. It follows that $\delta_G=\delta_{G_1}$ due to \cref{quasi}. Therefore an upper bound of $\delta_{G_1}$ is also an upper bound for $\delta_{G}$. 

In addition, if $G_1$ can be written as an extension of two abelian groups $A_2$ and $T_2$, where the torsion-free rank of $T_2$ is strictly less than $k$, consider the following commutative diagram

\[
\begin{tikzcd}    
1 \arrow[r] &G_1 \arrow[r,"i"]\arrow[d,"\pi"] & G \arrow[r,""]\arrow[d,"f"] & G/G_1 \arrow[r]\arrow[d,"i"] & 1 \\
1 \arrow[r] &T_2 \arrow[r,"i"] & T_2\times G/G_1 \arrow[r,""] & G/G_1 \arrow[r] & 1 \end{tikzcd},
\]
where $f(g,h):=(\pi(g),h),g\in G_1,h\in G/G_1$. By the snake lemma, there exists an exact sequence 
\[
\begin{tikzcd}    
\ker \pi=A_2 \arrow[r,""] & \ker f \arrow[r,""] & \ker i=1 \arrow[r,""] &\mathrm{coker} \pi=1  \arrow[r,""]  &  \mathrm{coker} f \arrow[r,""] &\mathrm{coker} i=1.
\end{tikzcd}
\]
It follows that $\ker f$ is abelian and $f$ is surjective. Then $G$ can be represented as an extension of $\ker f$ by $T_2\times G/G_1$ where the torsion-free rank is of $T_2\times G/G_1$ strictly less than $k$. This contradicts the minimality of $k$. Therefore $k$ is preserved when passing to $G_1$.

Thus from now on, we shall assume that $T$ is a free abelian group of rank $k$ and $G$ is an extension of a tame $T$-module $A$ by the free abelian group $T$. Also let us assume that $k>0$.

Let $\mathcal T=\{t_1,\dots,t_k\}\subset G$ such that $\{\pi(t_1),\dots,\pi(t_k)\}$ forms a basis for $T$ and $\mathcal A$ be a finite subset of $G$ such that it contains all commutators $a_{ij}=[t_i,t_j]$ for $1\leqslant i<j \leqslant k$ and generates the $T$-module $A$. Then $\mathcal A\cup \mathcal T$ is a finite generating set for the group $G$.

By \cref{fp}, since $G$ is finitely presented,  $A$ is a tame $T$-module. Then there is a finite subset $\Lambda\subset C(A)\cup C(A^*)$ such that for each character $\chi: T\to \mathbb R$, there exists $\lambda\in \Lambda$ such that $\chi(\lambda)>0$. Let $F$ be the free group generated by $\mathcal T$ and $\bar F$ be the set of all ordered words in $F$ (See \cref{finitepresented}). Same as previous section, we let $\theta: F\to \mathbb R^k$ be the homomorphism given by 
\[\theta(t_i)=(\delta_{i1},\dots.\delta_{ik}), 1\leqslant i\leqslant k.\]
If $w\in F$ we shall write $\bar w$ for the unique word in $\bar F$ representing $w$ module $F'$. In addition, we denote $\tilde w\in T$ for the image of $w\in F$ under $\pi$.

Then we are able to define a sequence of groups $G_r$ as what we did in \cref{finitepresented}. But for our purpose, we will need a larger $R$. Let 
\[R=2k\max\{D^2/2C,D,D^2/(4kC-4)\},\]
where $C,D$ are defined in the same way in \cref{finiteness}. Since $R>2k\max\{D,D^2/C\}$, $G_R\cong G_\infty$, and in particular, $G$ is a factor group of the finitely presented group $G_\infty$. Then we can list all defining relations of $G_\infty$ here:

\begin{align}
	[t_i,t_j]&=a_{ij}, &\text{for } 1\leqslant i<j\leqslant k,\label{commutative1'}\\
	[a,b^u]&=1, &\text{for $a,b\in \mathcal A,u\in \bar F$ with $\|\theta(u)\|<R$},\label{commutative2'}\\
	\prod_{u\in \bar F} (a^{\lambda(\hat u)})^u&=a, &\text{for } a\in \mathcal A, \lambda\in \Lambda\cap C(A), \label{action1'}\\
	\prod_{u\in \bar F} (a^{\lambda(\hat u)})^{u^{-1}}&=a, &\text{for } a\in \mathcal A, \lambda\in \Lambda\cap C(A^*). \label{action2'}
\end{align}

 To simplify our notation, we will write relations (\ref{commutative1'}) and (\ref{commutative2'}) as $\mathcal R_1$ and relations (\ref{action1'}) and (\ref{action2'}) as $\mathcal R_2$.

Denote the epimorphism $\varphi: G_\infty \to G$ induced by the identity map on $\mathcal A\cup \mathcal T$. Note that $\varphi$ induces an isomorphism on $G_{\infty}/A_{\infty}\cong T$. Therefore $\ker \varphi\leqslant A_\infty$ is abelian where $A_\infty:=\llangle \mathcal A \rrangle_{G_\infty}\vartriangleleft G_\infty$. Let $\ker \varphi=\llangle \mathcal R_3\rrangle_{G_\infty}$, where $\mathcal R_3$ is a finite set. 

Thus we obtain a finite presentation for $G$. 
\begin{equation}
\label{presentation1}
	G=\langle \mathcal A\cup \mathcal T\mid \mathcal R_1\cup \mathcal R_2\cup \mathcal  R_3\rangle.
\end{equation} 

Then we have the following proposition:
\begin{proposition}
		\label{main2}
	Let $k$ be the torsion-free rank of $T$ such that $k>0$, then $\delta_G(n)\preccurlyeq 2^{n^{2k}}$.
\end{proposition}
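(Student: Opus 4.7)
The plan is to take any word $w$ of length $n$ with $w =_G 1$ and decompose it into at most $2^{n^{2k}}$ conjugates of relators from the presentation (\ref{presentation1}). The argument splits naturally into two phases: a polynomial-cost \emph{collection} performed inside $G_\infty$ that uses only $\mathcal R_1$, followed by a \emph{module reduction} of the resulting ``pure $A$-part'' that uses $\mathcal R_2\cup\mathcal R_3$ and invokes \cref{division}.

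For the collection, I would use the commutators $[t_i,t_j]=a_{ij}$ together with the bounded-range commutations $[a,b^u]=1$ from $\mathcal R_1$ to rewrite $w$ in $G_\infty$ as $u\cdot\tau$, where $\tau=t_1^{n_1}\cdots t_k^{n_k}$ is an ordered $\mathcal T$-word and $u$ is an ordered product of conjugates $a^v$ with $a\in\mathcal A$, $v\in\bar F$, and $\|\theta(v)\|\leq n$. Because $A_\infty$ is abelian in $G_\infty$, once the $a$-letters have been grouped they commute freely. The reorderings needed to replace an arbitrary conjugator $v$ by its ordered representative $\bar v$ are furnished by \cref{HP}(a)--(c); the enlargement of $R$ to $2k\max\{D^2/2C,\,D,\,D^2/(4kC-4)\}$ made in \cref{maintheorem1} is engineered precisely so that the hypotheses of part (c) remain satisfied throughout. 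Since $w=_G 1$ forces $\pi(w)=0$ in the free abelian group $T$, the ordered word $\tau$ must be empty, and after combining like conjugates one obtains the size bounds $|u|\leq n$ and $\deg u\leq n$. This phase costs $O(n^2)$ relator applications, which is absorbed into $\preccurlyeq 2^{n^{2k}}$.

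Next I would identify $u$ with an element $\hat u$ of the free $\mathbb Z T$-module $M=\bigoplus_{a\in\mathcal A}\mathbb Z T$ via $a^v\mapsto \tilde v\cdot a$. The images of $\mathcal R_2\cup\mathcal R_3$ in $M$ generate a submodule $S\subset M$ with $A\cong M/S$, and $\hat u\in S$ because $w=_G 1$. Applying \cref{division} to $S$ produces a Gr\"obner basis $\{f_1,\dots,f_l\}$ and a constant $K$ such that
\[\hat u = \sum_{i=1}^l \alpha_i f_i, \qquad \sum_{i=1}^l |\alpha_i| \leq n\cdot K^{n^{2k}}.\]
Each monomial $c\cdot t^m$ appearing in $\alpha_i$ translates into $|c|$ applications in $G$ of the conjugate $(s_i)^{t^m}$ of the relation $s_i\in\mathcal R_2\cup\mathcal R_3$ represented by $f_i$. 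Thus the module-reduction phase contributes at most $\sum_i|\alpha_i|\leq n\cdot K^{n^{2k}}$ relators, which is $\preccurlyeq 2^{n^{2k}}$ since $K$ is a constant.

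The delicate point I expect to be hardest is the bookkeeping in the collection phase. Every swap of a $t$-letter past an $a$-letter modifies the conjugator, and the repeated invocation of \cref{HP}(c) must remain valid with the norm bound $\|\theta(v)\|\leq n$ preserved throughout; otherwise either $\deg\hat u$ or $|\hat u|$ would blow up and the bound delivered by \cref{division} would no longer yield $\preccurlyeq 2^{n^{2k}}$. Once these two bounds are secured, the Gr\"obner basis argument is essentially mechanical and immediately delivers the desired exponential-in-$n^{2k}$ estimate.
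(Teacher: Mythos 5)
Your proposal has the right overall shape --- collect, order, then reduce in the module via \cref{division} --- and this is indeed the route the paper follows. But there are two places where you have substituted optimism for the actual work.

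First, the collection phase is not $O(n^2)$. The presentation only contains the commutativity relations $[a,b^u]=1$ for $\|\theta(u)\|<R$, a fixed finite radius. To commute a pair $a^{v}$, $b^{v'}$ with $\|\theta(v)\|,\|\theta(v')\|$ on the order of $n$ you must first derive $[a,b^u]=1$ for large $u$ from the given tame-module relations, and that derivation is exponentially expensive: this is exactly what \cref{conjugate} quantifies, giving $\area([a,b^u])\leqslant K^n$ for $\|\theta(u)\|<n$, with \cref{organizer} giving $\area(a^ua^{-\bar u})\leqslant (2K)^n$. Citing \cref{HP}~(a)--(c) only tells you that certain commutators are \emph{conjugate} in $H_r$; conjugacy by itself gives no control over area, and converting that information into an area bound is the entire content of \cref{conjugate}. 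The collection phase therefore actually costs on the order of $(n^2+n)^2 K^{2n}$. This happens not to break the final bound since $K^{2n}\preccurlyeq 2^{n^{2k}}$ for $k\geqslant 1$, but the reason you gave for being able to ``absorb'' this phase is wrong, and if you had tried to instantiate the $O(n^2)$ claim you would have gotten stuck.

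Second --- and this is the substantive gap --- after applying \cref{division} you write $\hat u=\sum_i\alpha_if_i$ and conclude that the module-reduction phase contributes at most $\sum_i|\alpha_i|$ relators, in effect claiming $\area(w_4)\leqslant\sum_i|\alpha_i|$. That would be true only if the word $w_4$ were literally equal in the free group $F(\mathcal A\cup\mathcal T)$ to the word $\prod_i f_i^{\alpha_i}$, and it is not: this equality holds only in $H_\infty$, i.e.\ modulo all the metabelian identities, most of which are not relators in the finite presentation. Area counts cells in a genuine free-group decomposition, so you must additionally pay for the relators from $\mathcal R_1\cup\mathcal R_2$ needed to convert the free word $\prod_i f_i^{\alpha_i}$ into $w_4$: sorting conjugates, pushing exponents around, combining like terms. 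This is precisely what \cref{thelemma} estimates, and the resulting cost is again on the order of $C^{n^{2k}}$ --- in fact it is the dominant term, and the place where most of the actual work in the paper's proof takes place. Without this accounting you have bounded only the number of $\mathcal R_3\cup\mathcal R_4$ cells, not the area of $w_4$, so the proof is incomplete at its central step.
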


\cref{main2} will be proved in \cref{maintheorem5}. 

\begin{proof}[Proof of \cref{main}]
	If $k=0$, $G$ has a finite index abelian subgroup. Therefore $\delta_G \preccurlyeq n^2$ by \cref{quasi}. 
	
	If $k>0$, the result follows directly from \cref{main2}.
\end{proof}

\subsection{The Ordered Form of Elements}
\label{maintheorem2}

For convenience, we assume that $|\mathcal A|=m$ and denote $\mathcal A=\{a_1,\dots,a_m\}$. For $g\in\llangle \mathcal A\rrangle_{H_\infty}$, we have  $\pi(g)=0$. Thus the sum of exponents of each $t_i$ is 0. Note that we only consider words that are fully reduced in $F(\mathcal A\cup \mathcal T)$, the free group generated by $\mathcal A\cup \mathcal  T$. Next, we introduce the \emph{ordered form} of elements in $\llangle \mathcal A\rrangle_{H_{\infty}}$, which helps us understand the module structure on the normal closure of $\mathcal A$.

We first claim that there exists an algorithm that allows us to write a word $w\in \llangle \mathcal A\rrangle_{H_\infty}$ in the form of a product of conjugates of elements in $\mathcal A$ in a unique way. 

Let us start with a word $w=u_1b_1u_2b_2\dots u_sb_su_{s+1}\in \llangle \mathcal A\rrangle_{H_\infty}$ where $u_i\in F,b_i\in \mathcal A^{\pm 1}$. Here $u_1,u_{s+1}$ could be empty. Then 
\[w=b_1^{u_1^{-1}}b_2^{(u_1u_2)^{-1}}\dots b_s^{(u_1u_2\dots u_s)^{-1}}u_1u_2\dots u_{s+1}.\]
The equality holds in the free group generated by $\mathcal A\cup \mathcal T$. Note that $u_1\dots u_{s+1}$ is a word in $F$. It also has the property that the sum of exponents of each $t_i$ is 0. We then write this word in the product of conjugates of $\{[t_i,t_j]^{\pm 1}, i<j\}$ algorithmically in the following fashion: assume we already write $u_1\dots u_{s+1}$ as $w_1w_2$ where $w_1$ is a product of conjugates of $\{[t_i,t_j]^{\pm 1}, i<j\}$ and $w_2$ is a word in $F$ such that the sum of exponents of each $t_i$ is 0. Let $t_i$ be the letter with the smallest indices among all letters in $w_2$. Then $w_2$ can be written as $w_2't_i^{\varepsilon}w_2'', \varepsilon=\pm 1$ where $w_2'$ does not contain any $t_i^{\pm 1}$. Then 
\[w_2't_i^{\varepsilon}w_2''=[t_i^{\varepsilon},t_{j_1}^{\varepsilon_1}]^{(w_2't_{j_1}^{-\varepsilon_1})^{-1}}[t_i^{\varepsilon},t_{j_2}^{\varepsilon_2}]^{(w_2't_{j_1}^{-\varepsilon_1}t_{j_2}^{-\varepsilon_2})^{-1}}\dots [t_i^\varepsilon,t_{j_l}^{\varepsilon_l}]t_i^{\varepsilon}w_2'w_2'',\]
where $w_2'=t_{j_l}^{\varepsilon_l}\dots t_{j_1}^{\varepsilon_1}.$ Since the sum of exponent of $t_i$ is 0, by repeating this process we can gather all $t_i$ to the left and hence they will be canceled eventually. We end up with a word $w_3w_4$ where $w_3$ is a product of conjugates of $\{[t_i,t_j]^{\pm 1}, i<j\}$ and $w_4$ is a word in $F$ such that the sum of exponents of each $t_i$ is 0 and of the length strictly less than $w_2$. Thus by repeating this algorithm, we are able to write $g$ as a product of conjugates of $\{[t_i,t_j]^{\pm 1}, i<j\}$ in a unique way. Now we just apply relations in (\ref{commutative1'}) replacing all the commutators by their corresponding letters in $\mathcal A$. Therefore the claim is proved. 

Since $g$ can be written as a product of conjugates of elements in $\mathcal A$, applying commutator relations like $[a,b^u], a,b\in \mathcal A,u\in F$, we are able to commute those conjugates hence gather all conjugates which share the same base. In addition, combining the fact $a^u=a^{\bar u}$ from \cref{HP} (a), we can write $g$ in a \emph{ordered form} of the following type
\[g=a_1^{\lambda_1}\dots a_m^{\lambda_m},\]
where $\lambda_i\in \mathbb ZT$ and terms of $\lambda_i$ are written in the order from the high to low with respect to $\prec$ which we define in \cref{membership}. For $w$ in $\llangle \mathcal A\rrangle_{G_\infty}$ (or $\llangle \mathcal A\rrangle_G$), we define the ordered form as the ordered form of $\iota(w)$ where $\iota: G_{\infty}\to H_\infty$ (resp. $G\to H_\infty$) is the combinatorial map induced by identity on $\mathcal A\cup \mathcal T$. Note that by the way we define the ordered form, the ordered form of each word is unique. The ordered forms distinguish different elements in the $T$-module $\llangle \mathcal A\rrangle_{H_\infty}$. In fact, two elements in $\llangle \mathcal A\rrangle_{H_\infty}$ are equal in $H_\infty$ if and only if they have the same ordered form. One remark is that two words which are equal in $G$ or $G_{\infty}$ may have different ordered forms, for example, $a_1$ and $\prod_{u\in \bar F} (a^{\lambda(\hat u)})^u$, $\lambda\in C(A)$.

Recall that $G=\langle \mathcal A\cup \mathcal T\mid \mathcal R_1\cup \mathcal R_2\cup \mathcal  R_3\rangle$. Note that both $\mathcal R_2$ and $\mathcal R_3$ are contained in the normal closure of $\mathcal A$. From now on we write relators from $\mathcal R_2\cup \mathcal R_3$ in their ordered form. 

\subsection{Main Lemmas}
\label{maintheorem3}

Before we embark on the proof of \cref{main2}, we shall establish some preliminary lemmas.

Now consider an arbitrary factor group $H$ of $G_\infty$ equipped with the presentation
\begin{equation}
\label{presentation03}
	H=\langle \mathcal A \cup \mathcal T\mid \mathcal R_1\cup \mathcal R_2\cup \mathcal R\rangle
\end{equation}
where $\mathcal R$ is a finite subset in $G_\infty$. Then $H\cong G_\infty/\llangle \mathcal R \rrangle_{G_\infty}$. Note that if $\mathcal R=\mathcal R_3$, $H=G$, if $\mathcal R=\emptyset$, $H=G_\infty$ which are two major examples we concern. We have the following lemmas for $H$.
\begin{lemma}
	\label{abelian}
	Let $H$ be a factor group of $G_\infty$ equipped with presentation (\ref{presentation03}) and $w$ be a word in $(\mathcal T\cup \mathcal T^{-1})^*$ such that $|w|=n$, then 
	\[w=_H \bar w\prod_{i=1}^p b_i^{u_i}\]
	where $p\leqslant n^2, b_i\in \mathcal A^{\pm 1}, u_i \in F, \Tr(\theta(u_i))\subset B_n$. In addition, the number of relations cost to convert LHS to RHS is bounded by $n^2$.
\end{lemma}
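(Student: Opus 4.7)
The plan is a direct bubble-sort argument on the letters of $w$. Write $w = s_1 s_2 \cdots s_n$ with each $s_i \in \mathcal{T} \cup \mathcal{T}^{-1}$. The target $\bar w = t_1^{m_1} t_2^{m_2} \cdots t_k^{m_k}$ is obtained by rearranging these letters into the canonical order, where $m_i$ is the net exponent of $t_i$ in $w$. I would proceed by maintaining a running decomposition $w =_H \sigma \cdot \tau$ in which $\sigma$ is a word in $F = F(\mathcal{T})$ of length at most $n$ and $\tau$ is a product of the form $\prod b_j^{u_j}$ with $b_j \in \mathcal{A}^{\pm 1}$ and $u_j \in F$. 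Initially $\sigma = w$ and $\tau$ is empty, and the goal is to reach $\sigma = \bar w$.

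A single step consists of performing an adjacent transposition inside $\sigma$. If $s_p s_{p+1}$ is a pair that needs to be swapped (indices out of order), I apply a consequence of \eqref{commutative1'}, namely $t_i^{\varepsilon_1} t_j^{\varepsilon_2} = t_j^{\varepsilon_2} t_i^{\varepsilon_1} \cdot b$ with $b \in \mathcal{A}^{\pm 1}$. Each signed variant follows from \eqref{commutative1'} using a bounded number of applications. If instead $s_p s_{p+1} = 1$ (free cancellation), the letters simply disappear at no cost. To keep $\sigma$ a pure $\mathcal{T}$-word, I immediately push the newly created $b$ past the suffix $\sigma_2$ of $\sigma$ lying to its right, via the identity $b \sigma_2 = \sigma_2 \cdot b^{\sigma_2}$. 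This is a free-group rewriting and uses no relation; it appends the factor $b^{\sigma_2}$ to $\tau$, and $\sigma_2$ is indeed a word in $F$.

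Once $\sigma$ has been fully sorted, it equals $\bar w$, and $\tau$ is the desired product $\prod_{i=1}^p b_i^{u_i}$. The counting is standard: bubble-sorting $n$ symbols takes at most $\binom{n}{2}$ adjacent transpositions, so $p \leq \binom{n}{2} \leq n^2$, and the number of relation applications (each transposition contributing a bounded number of applications of \eqref{commutative1'}) is also bounded by $n^2$ up to an absorbable constant. For the trace condition, each conjugator $u_i$ is a proper suffix of some intermediate $\sigma$, hence $|u_i| < n$. Since every prefix of $u_i$ has image in $\mathbb{R}^k$ of norm at most its length, $\mathrm{Tr}(\theta(u_i)) \subset B_n$.

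The main bookkeeping subtlety is ensuring that the conjugators $u_i$ land in $F$ rather than accumulating $\mathcal{A}$-letters from the growing $\tau$. This is resolved by the discipline of pushing every newly produced $b$ all the way past the remaining $\mathcal{T}$-suffix of $\sigma$ before executing the next swap, so that the ``sorting region'' $\sigma$ is always a pure word in $\mathcal{T}^{\pm 1}$ and the conjugators inherited from its suffixes are automatically in $F$.
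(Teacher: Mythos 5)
Your bubble-sort argument is essentially the same as the paper's proof: the paper also moves each letter of $w$ into its canonical position using the commutator relations $[t_i,t_j]=a_{ij}$ and bounds the number of moves by $n^2$; it is simply terser about the bookkeeping you spell out.

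One imprecision worth flagging: the transposition identity $t_i^{\varepsilon_1} t_j^{\varepsilon_2} = t_j^{\varepsilon_2} t_i^{\varepsilon_1}\cdot b$ with $b\in\mathcal A^{\pm1}$ holds on the nose only for one of the four sign patterns (namely $t_j t_i = t_i t_j\, a_{ij}^{-1}$ with $i<j$). For the other patterns the extra factor is a short conjugate; for instance $t_j t_i^{-1}=t_i^{-1} t_j\, a_{ij}^{\,t_i^{-1}}$ and $t_j^{-1}t_i^{-1}=t_i^{-1}t_j^{-1}\,a_{ij}^{\,-t_i^{-1}t_j^{-1}}$. This does no real harm, because you push the factor past $\sigma_2$ anyway; the resulting conjugator $u_i$ is then $v\sigma_2$ with $b\in\mathcal A^{\pm1}$ and $|v|\le 2$, rather than literally a suffix of an intermediate $\sigma$ as you claim. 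Since $|v\sigma_2|\le |\sigma|\le n$, the trace bound $\Tr(\theta(u_i))\subset B_n$ still holds. Finally, each such transposition is exactly one application of a conjugate of a relator of the form $[t_i,t_j]a_{ij}^{-1}$, so the cost is at most $\binom{n}{2}\le n^2$ outright and you do not need to appeal to an ``absorbable constant.'' With these small corrections the proof is sound and coincides with the paper's.
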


\begin{proof}
	Since $\bar w=t_1^{m_i}\dots t_k^{m_k}$ for some $m_1,\dots,m_k\in \mathbb Z$ such that $\sum_{i=1}^k |m_i|\leqslant n$, to move each letter in $w$ to the desired place, it will cost at most $n$ commutators of the form $[t_i,t_j], 1\leqslant i<j \leqslant k$. In total, we need at most $n^2$ such commutators. That is,
	\[w= \bar w\prod_{i=1}^p [t_{i_1},t_{i_2}]^{\varepsilon_i u_i'}, \text{ where } u_i'\in F, p\leqslant n^2, 1\leqslant i_1<i_2\leqslant k, \varepsilon_i\in\{\pm 1\}.\]
	Moreover, since the length of $w$ is bounded by $n$, $\Tr(\theta(u_i'))\leqslant n$.
	
	By applying relations in $\{a_{ij}=[a_i,a_j]\mid 1\leqslant i<j\leqslant n\}$ $p$ times we immediately have
	\[w=_H \bar w\prod_{i=1}^p b_i^{u_i}, \Tr(\theta(u_i))\leqslant n.\]
	The cost of relations is bounded by $p\leqslant n^2$. 
\end{proof}

In particular, for $w\in F$ such that $\pi(w)=1$, it costs at most $n^2$ relations in $H$ to convert it to a product of conjugates of elements in $\mathcal A$. 

\begin{lemma}
\label{conjugate}
Let $H$ be a factor group of $G_\infty$ equipped with presentation (\ref{presentation03}) then there exists a constant $K$ only depends on $\mathcal R_1\cup \mathcal R_2$ such that 
\[\area([a,b^u])\leqslant K^n,\forall a,b\in \mathcal A, \|\theta(u)\|<n.\]
\end{lemma}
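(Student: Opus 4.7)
My plan is to induct on $n = \lceil \|\theta(u)\| \rceil$, first establishing the statement for ordered words $u \in \bar F$ and then reducing the general case to this one. Throughout, the constant $K$ will depend only on the size of $\Lambda$, the maximal length of a relator in $\mathcal R_1 \cup \mathcal R_2$, and the constants $C$, $D$, $R$ associated with $\mathcal R_2$.

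For ordered $u \in \bar F$ with $\|\theta(u)\| < R$, the relator $[a, b^u]$ lies literally in $\mathcal R_2$, so $\area([a, b^u]) \leq 1$. For ordered $u$ with $\|\theta(u)\| = n \geq R$, I will invoke tameness of $A$: the collection $\mathcal F = \{L_\lambda := \theta(\supp \lambda) : \lambda \in \Lambda\}$ satisfies the hypothesis of the geometric Lemma 4.1, so there exists $\lambda \in \Lambda$ with either $\theta(u) + L_\lambda \subset B_{n-\varepsilon(n)}$ (if $\lambda \in C(A)$) or $\theta(u) - L_\lambda \subset B_{n-\varepsilon(n)}$ (if $\lambda \in C(A^*)$). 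Conjugating relation (\ref{action1'}) or (\ref{action2'}) for this $\lambda$ by $u$ (respectively $u^{-1}$) and solving for $b^u$ rewrites it as a product of at most $|\supp \lambda| \cdot \max_v |\lambda(\hat v)|$ conjugates $b^{vu}$ (respectively $b^{v^{-1}u}$), each with $\|\theta(\cdot)\| < n - \varepsilon(n)$. This costs a single application of a relation in $\mathcal R_2$. Substituting into $[a, b^u]$ produces a bounded-size product of commutators $[a, b^{u'}]$ with strictly smaller $\|\theta(u')\|$, together with a bounded number of applications of commutator relations needed to separate the factors. This yields a recursion $f(n) \leq C_1 + C_2\, f(n - \varepsilon)$, which iterates in $O(n)$ steps to give $f(n) \leq K^n$.

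For general $u \in F$ (not ordered), Lemma 5.1 lets me rewrite $u = \bar u \cdot c$ in $F$, where $c$ is a product of at most $|u|^2$ commutators of the $t_i$, each of which collapses to an element of $\mathcal A$ via $\mathcal R_1$. Since $b^c = b$ follows from the abelianness of the normal closure of $\mathcal A$ (itself a consequence of $\mathcal R_1 \cup \mathcal R_2$) at polynomial cost, we reduce $[a, b^u]$ to $[a, b^{\bar u}]$ at a polynomial overhead in $|u|$. Replacing $u$ by a shortest representative of $\bar u$ of length $\|\theta(u)\|_{\ell_1} \leq \sqrt{k}\,n$ ensures this overhead is $O(n^2)$, which is absorbed into the bound $K^n$.

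The main obstacle I foresee is the bookkeeping in the inductive step: after substituting $b^u = \prod (b^{\lambda(\hat v)})^{vu}$, the conjugating words $vu$ are typically not in $\bar F$, yet to continue the induction cleanly one wants to rewrite them in ordered form and apply the inductive hypothesis to $[a, b^{\overline{vu}}]$. Here Proposition 4.1(b,c) is designed precisely for this purpose, and its hypotheses require the slightly enlarged constant $R = 2k \max\{D^2/2C,\, D,\, D^2/(4kC-4)\}$ that appears in the preparation subsection. Ensuring that the hypotheses of Proposition 4.1 remain satisfied at every stage of the recursion, so that the reordering cost truly stays constant per step rather than accumulating, is the delicate part of the argument.
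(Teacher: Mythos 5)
Your proposal is essentially the paper's proof. Both arguments invoke the geometric lemma (Lemma 4.1) to extract a $\lambda \in \Lambda$ whose support shifts the conjugating word inward, substitute the corresponding action relation (4.7) or (4.8) into $[a,b^u]$, decompose via the commutator identity, reorder the conjugating words with Proposition 4.1(b)(c), and close the recursion to get a $K^n$ bound. The only cosmetic difference is the direction of the induction: you decrease $\|\theta(u)\|$ by roughly $\varepsilon$ per step, whereas the paper increases by the fixed step $\frac{1}{2k}$ (a lower bound for $\varepsilon(r)$ when $r>R$, which is exactly what motivates the enlarged $R$ you flag) and iterates $2k$ times to advance from $n$ to $n+1$; the two recursions iterate to the same exponential bound. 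Your observation that the "delicate part" is keeping the Proposition 4.1 hypotheses satisfied throughout is indeed where the paper's care goes, and your extra paragraph reducing general $u \in F$ to $\bar u$ is work the paper defers to Lemma 5.3.

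Two small points of bookkeeping you should tighten if you write this out. First, you write "$\theta(u) + L_\lambda \subset B_{n-\varepsilon(n)}$ (if $\lambda \in C(A)$) or $\theta(u) - L_\lambda \subset B_{n-\varepsilon(n)}$ (if $\lambda \in C(A^*)$)": the geometric lemma gives a single conclusion $\theta(u) + L_\lambda \subset B_r$ regardless of whether $\lambda$ lies in $C(A)$ or $C(A^*)$; the sign difference enters later, in whether you conjugate relation (4.7) by $v$ (replacing $b^v$) or relation (4.8) by $v^{-1}$ (replacing $a^{v^{-1}}$ after the identity $[a,b^v]=[a^{v^{-1}},b]^{v^{-1}}$), and in both cases the norms of the new conjugating words are controlled by the same inclusion. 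Second, for the recursion $f(n)\leq C_1 + C_2\,f(n-\varepsilon)$ to iterate in $O(n)$ steps you must note that $\varepsilon(r)$ is bounded away from $0$, which you implicitly get from the choice of $R$, but that should be stated so the number of iterations is visibly linear in $n$ rather than depending on how close $\varepsilon(r)$ gets to $0$.
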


\begin{proof}
	Let $\mathcal F=\{\theta(\supp(\lambda))\mid \lambda \in \Lambda\}$ then $\mathcal F$ is a finite colletction of finite sets. By the choice of $\Lambda$, $\mathcal F$ satisfies the assumptions of \cref{geometric}. 
	
	By \cref{geometric}, each $x\in B_{r+\varepsilon(r)}$ can be taken from $B_r$ by $\mathcal F$ for $r>R$. Recall that $R$ is defined to be $\max\{D,D^2/2C,D^2/(4kC-4)\}$ and $\varepsilon(r)=C-D^2/2r$ , where $C,D$ are purely determined by $\Lambda$ hence $\mathcal R_1\cup \mathcal R_2$ as we stated in \cref{finiteness}.
	
According to our choice of $R$, we note that $\varepsilon(r)\geqslant \varepsilon(2kD^2/(4kC-4))=\frac{1}{2k}$ for $r>R$. Let $K_1$ be the constant which is large enough such that $f(n)\leqslant K_1^n$ for $n\leqslant R$, and $K_2$ be the constant  
	\[K_2:=\max_{\lambda\in \Lambda}\{\sum_{u\in \bar F} |\lambda(u)|\}+2.\]
	Since each $\lambda$ has finite support, $K_2$ is well-defined. Now let $K:=\max\{K_1,K_2^{2k}\}.$
	
	Suppose for $n>R$, $\area([a,b^u])\leqslant K^n, \forall a,b\in \mathcal A, \|\theta(u)\|< n$. We then prove our lemma by induction. Let us first consider the case $r=n+\frac{1}{2k}$. Fix some $v\in \bar F$ satisfying $\|\theta(v)\|< r$. Since $\varepsilon(n)\geqslant \frac{1}{2k}$, $B_{n+\frac{1}{2k}}$ can be taken from $B_n$ by $\mathcal F$. Then there is $\lambda\in \Lambda$ with $\theta(\supp(\lambda \hat v))\subset B_n$ by the definition of ``taken from''. 
	
	Therefore we have two cases depending on $\lambda\in C(A)$ or $C(A^*)$. Firstly assume that $\lambda\in \Lambda\cap C(A)$. Then by applying the commutator formula $[x,yz]=[x,y]^{x^{-1}zx}[x,z]$, we obtain
	\[[a,b^v]=_G[a,\prod_{u\in \bar F} (b^{\lambda(\hat u)})^{uv} ]=\prod_{u\in \bar F}[a,b^{\lambda(\hat u)uv}]^{h(u)},\]
	where the $h(u)$'s are certain elements in $H$ which need not concern us. Note that in the first equality above, we apply relations in (\ref{action1'}) twice to replace $b$ by $\prod_{u\in \bar F} b^{\lambda(y)u}$. Since $\supp(\lambda)\subset \bar B_D$ we have $\|\theta(u)\|<D<\frac{n}{2k}.$ Additionally, we have $\|\theta(v)\|< n+\frac{1}{2k}$ and $\|\theta(uv)\|<n$. It meets all assumptions of \cref{HP} (c). Note that $H$ is a factor group of $H_n$ which we defined in \cref{finitepresented}. Then $[a,b^{\lambda(\hat u)uv}]$ is conjugate in $H$ to $[a,b^{\lambda(\hat u)\overline{uv}}]$, the area of which is bounded by $|\lambda(\bar u)|K^n$. It follows that
	\[\area([a,b^v])\leqslant 2+\sum_{u\in \bar F} \area([a,b^{\lambda(\hat u)uv}])\leqslant 2+\sum_{u\in \bar F} |\lambda(\hat u)|K^n\leqslant K_2 K^n.\]
	Repeating this process $2k$ times, we obtain that 
	\[\area([a,b^v])\leqslant K_2^{2k} K^n\leqslant K^{n+1}, \text{ for } v\in \bar F, \|\theta(v)\|<n+1.\]  
	If $\lambda\in \Lambda\cap C(A^*)$, the only different is that 
	\[[a,b^v]=[a^{v^{-1}},b]^{v^{-1}}=[\prod_{u\in \bar F}(a^{\lambda(\hat u)})^{uv^{-1}},b]^{v^{-1}}.\]
	Similarly we obtained that 
	\[\area([a,b^v])\leqslant K_2^{2k}K^n\leqslant K^{n+1}.\]
\end{proof}

Furthermore, \cref{conjugate} allows us to estimate the cost to commute two conjugates of elements in $\mathcal A$. Since the normal closure of $\mathcal A$ in $H$ is abelian, this lemma provides a tool to estimate the cost for converting words in $\llangle \mathcal A\rrangle_H$, in particular, in $G$. Also \cref{conjugate} reveals how much metabelianness cost in a finitely presented metabelian group. We will discuss this in \cref{relativedehn1}.
\begin{lemma}
\label{organizer}
Let $H$ be a factor group of $G_\infty$ equipped with presentation (\ref{presentation03}) and $K$ be the same constant in \cref{conjugate}. Then in $H$ we have
\[\area(a^ua^{-\bar u})\leqslant (2K)^n,\forall a\in \mathcal A,u\in F, \Tr(u)\subset B_n\]
\end{lemma}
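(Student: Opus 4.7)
The plan is to transform $u$ into its ordered form $\bar u$ by a bubble-sort procedure in the free group $F$, and then exploit the abelianness of $\llangle\mathcal A\rrangle$ in $H$ together with \cref{conjugate} to control the area. Writing $u=s_1s_2\cdots s_m$ as a reduced word in $\mathcal T^{\pm 1}$, each adjacent transposition $s_is_{i+1}\to s_{i+1}s_i$ in $F$ introduces the commutator $[s_i,s_{i+1}]$, which becomes $a_{jk}^{\pm 1}$ after one application of a relation from $\mathcal R_1$. Carrying out all such transpositions rewrites $u$ in $H$ as $\bar u\cdot\prod_\ell c_\ell^{w_\ell}$, where each $c_\ell\in\mathcal A^{\pm 1}$ and each conjugator $w_\ell$ is a prefix of an intermediate sorting state, so that $\Tr(w_\ell)\subset B_n$.

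Using that $\llangle\mathcal A\rrangle$ is abelian in $H$ and contains both $a^{\bar u}$ and every $c_\ell^{w_\ell}$, one has the identity
\[
a^u a^{-\bar u}=[v,a^{-\bar u}]=\prod_\ell\bigl[c_\ell^{w_\ell},a^{-\bar u}\bigr]^{(\text{conj.})}
\]
with $v=\bar u^{-1}u\in F'$. Each factor on the right can be rewritten in the form $[a_{i_\ell j_\ell},a^{-\bar u w_\ell^{-1}}]^{w_\ell}$, which by \cref{conjugate} has area at most $K^{\|\theta(\bar u w_\ell^{-1})\|}\leq K^{2n}$, since both $\|\theta(\bar u)\|$ and $\|\theta(w_\ell)\|$ are at most $n$.

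The main obstacle is bounding the number $N$ of transpositions so that the total cost $N\cdot K^{O(n)}$ fits under $(2K)^n$. To this end I would argue that the sorting procedure corresponds to filling the two-dimensional loop $u\bar u^{-1}\in F'$ inside $B_n\subset\mathbb R^k$ with elementary commutator cells, and that this can be done with only a polynomial in $n$ number of cells by a combinatorial isoperimetric estimate in $\mathbb Z^k$. The polynomial factor then gets absorbed into the exponential base, giving a total bounded by $(2K)^n$ (enlarging $K$ to handle finitely many small $n$). The careful bookkeeping of conjugators -- ensuring that each $\|\theta(w_\ell)\|\leq n$ throughout the sort so that \cref{conjugate} applies with parameter at most $2n$, and that the per-transposition introduction of $a_{jk}^{\pm 1}$ does not blow up subsequent traces -- is the key technical ingredient.
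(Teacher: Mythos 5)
Your overall plan---sort $u$ into $\bar u$ by transpositions, convert each commutator $[t_i,t_j]^{\pm1}$ into an $\mathcal A$-letter via $\mathcal R_1$, and then use \cref{conjugate} to commute the resulting $\mathcal A$-conjugates past $a^{\bar u}$---is the right kind of plan and matches the paper in spirit. But there is a genuine gap in the bookkeeping, and it is exactly the one you flag as the ``main obstacle'': your per-commutator cost $K^{\|\theta(\bar u w_\ell^{-1})\|}$ is only bounded by $K^{2n}$, since $\bar u$ and $w_\ell$ each have norm up to $n$. Even if $N$ were polynomial in $n$, the total $N\cdot K^{2n}$ does \emph{not} fit under $(2K)^n=2^nK^n$: the ratio $K^{2n}/(2K)^n=(K/2)^n$ diverges for $K>2$, and no amount of enlarging $K$ or adjusting for small $n$ repairs this, because the mismatch is an exponential factor, not a constant. ``Absorbing the polynomial into the exponential base'' does not address an exponentially-too-large base.

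The paper avoids the $2n$ in the exponent by a more careful inductive setup. It induces on $n$ and at each step peels off a single final letter $t_s^{\pm1}$ from $u$, first invoking the induction hypothesis to reduce to $a^{\bar u' t_s^{\pm1}}$ with $\bar u'$ already ordered. It then explicitly computes the commutators needed to slide $t_s^{\pm1}$ into position inside $\bar u'$: each one is $[t_s,t_l]^{\pm1}$ conjugated by a word $\alpha_j$ that is a \emph{tail} of $\bar u$ (possibly times $t_s^{\pm1}$). This is the crucial structural fact: because $\alpha_j$ is a tail of $\bar u$, the element $\alpha_j\bar u^{-1}$ is (up to one letter) the inverse of a \emph{head} of $\bar u$, hence $\|\theta(\overline{\alpha_j u^{-1}})\|\leqslant n+1$, and \cref{HP}(b) applies. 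So \cref{conjugate} is invoked with parameter $n+1$, not $2n$, giving a per-commutator cost of $K^{n+1}$. There are also only $m\leqslant n$ such commutators (controlled by the exponent sums of $\bar u'$), so the inductive step contributes $(2K)^n + 2m + mK^{n+1}\leqslant (2K)^{n+1}$. In your bubble-sort the conjugators $w_\ell$ are prefixes of arbitrary intermediate sorting states and carry no such tail-of-$\bar u$ structure, which is why you cannot improve $2n$ to $n+1$.

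A secondary concern: the hypothesis is $\Tr(u)\subset B_n$, not $|u|\leqslant n$, so the word $u$ may be much longer than $n$; your combinatorial isoperimetric estimate for the number $N$ of transpositions would need to be in terms of the trace radius, not the word length, and you do not justify a polynomial bound in that setting. The paper sidesteps this by organizing via $\bar u'$ (whose exponent sums \emph{are} controlled by $n$) rather than transposing letters of $u$ directly.
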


\begin{proof}
	We prove it by an induction on $n$. Suppose for $i\leqslant n$, the result holds. Then for the case $n+1$, we write $u=u't_s^{\pm 1}$ then $\Tr(u)\subset B_{n+1},\Tr(u')\subset B_n$. 
	\[a^u=a^{u't_s^{\pm 1}}=(a^{\bar u'})^{t_s^{\pm 1}}\nu_1,\]
	where $\area(\nu_1)\leqslant (2K)^n$ by our inductive assumption. Write $\bar u'=t_1^{m_1}\dots t_k^{m_k}$, we claim that 
	\[\bar u't_s^{\pm 1}=\overline{u}\prod_{j=1}^m c_j^{\alpha_j} \text{ where } c_j\in \{[t_s,t_l]^{\pm 1}\mid 1\leqslant s<l\leqslant k\}, \alpha_j\in \bar F, m=\sum_{i=s+1}^k |m_i|\leqslant n. \]
	We need to be really careful here. Let us first consider the case that the exponent of $t_s$ is 1. We assume $s<k$, otherwise it is trivial. Note that if $m_k\geqslant 0$
	\begin{align*}
			t_k^{m_k}t_s=t_k^{m_k-1}t_st_k[t_s,t_k]^{-1}&=t_k^{m_k-2}t_st_k^2[t_s,t_k]^{-t_k}[t_s,t_k]^{-1}\\
			&=t_k^{m_k-3}t_st_k^3[t_s,t_k]^{-t_k^2}[t_s,t_k]^{-t_k}[t_s,t_k]^{-1} \\
			&\vdots \\
			&=t_st_k^{m_k}[t_s,t_k]^{-t_k^{m_k-1}}\dots [t_s,t_k]^{-t_k}[t_s,t_k]^{-1}.
	\end{align*}
    If $m_k<0$, we have
     \begin{align*}
			t_k^{m_k}t_s=t_k^{m_k+1}t_st_k^{-1}[t_s,t_k]^{t_s}&=t_k^{m_k+2}t_st_k^{-2}[t_s,t_k]^{t_st_k^{-1}}[t_s,t_k]^{t_s}\\
			&=t_k^{m_k+3}t_st_k^{-3}[t_s,t_k]^{t_st_k^{-2}}[t_s,t_k]^{t_st_k^{-1}}[t_s,t_k]^{t_s^{-1}} \\
			&\vdots \\
			&=t_st_k^{m_k}[t_s,t_k]^{t_st_k^{m_k+1}}\dots [t_s,t_k]^{t_st_k^{-1}}[t_s,t_k]^{t_s}.
	\end{align*}
	Repeating this process, we then prove the claim for the case that the exponent of $t_s$ is 1.
	
	On the other hand, if the exponent of $t_s$ is $-1$, then similarly, consider if $m_k\geqslant 0$
	 \begin{align*}
			t_k^{m_k}t_s^{-1}=t_k^{m_k-1}t_s^{-1}t_k[t_s,t_k]^{t_k}&=t_k^{m_k-2}t_s^{-1}t_k^{2}[t_s,t_k]^{t_k^2}[t_s,t_k]^{t_s}\\
			&=t_k^{m_k-3}t_s^{-1}t_k^{3}[t_s,t_k]^{t_k^{3}}[t_s,t_k]^{t_k^{2}}[t_s,t_k]^{t_k} \\
			&\vdots \\
			&=t_s^{-1}t_k^{m_k}[t_s,t_k]^{t_k^{m_k-1}}\dots [t_s,t_k]^{t_k^{2}}[t_s,t_k]^{t_k},
	\end{align*}
 and if $m_k<0$
 \begin{align*}
			t_k^{m_k}t_s^{-1}=t_k^{m_k+1}t_s^{-1}t_k^{-1}[t_s,t_k]^{-1}&=t_k^{m_k+2}t_s^{-1}t_k^{-2}[t_s,t_k]^{-t_k^{-1}}[t_s,t_k]^{-1}\\
			&=t_k^{m_k+3}t_s^{-1}t_k^{-3}[t_s,t_k]^{-t_k^{-2}}[t_s,t_k]^{-t_k^{-1}}[t_s,t_k]^{-1} \\
			&\vdots \\
			&=t_s^{-1}t_k^{m_k}[t_s,t_k]^{-t_k^{m_k+1}}\dots [t_s,t_k]^{-t_k^{-1}}[t_s,t_k]^{-1}.
	\end{align*}
	Again by repeating this process, the claim is proved. Thus by induction on $k$, we can move $t_s$ to the desired place.
    
    Now we have
    \[a^u=a^{\bar u't_s^{\pm 1}}\nu_1=(\prod_{j=1}^m c_j^{\alpha_j})^{-1}a^{\bar u}(\prod_{j=1}^m c_j^{\alpha_j})\nu_1.\]
    Apply relations from $\{a_{ij}=[a_i,a_j]\mid 1\leqslant i<j\leqslant n\}$ $2m$ times, we have that 
    \[a^u=(\prod_{j=1}^m d_j^{\alpha_j})^{-1}a^{\bar u}(\prod_{j=1}^m d_j^{\alpha_j})\nu_2\nu_1\]
    where $d_j\in \mathcal A^{\pm 1}$ and $\area(\nu_2)\leqslant 2m$ by our disccusion.
    
    Next we need to commute $a^{\bar u}$ and $d_j^{\alpha_j}$ for $j=1,\dots,m$ to the left and estimate the cost. Note that $[a^{\bar u},d_j^{\alpha_j}]$ is conjugate to $[a,d_j^{\alpha_j (\bar u)^{-1}}]$. From the computation above, $\alpha_j$ is either a tail of $\bar u$ or a tail of $\bar u$ multiplied by $t_s^{\pm 1}$. Therefore $(\bar u)^{-1}$, $\alpha_j$, $\alpha_j(\bar u)^{-1}$ satisfy the assumption of \cref{HP} (b). Thus $[a,d_j^{\alpha_j (\bar u)^{-1}}]$ is conjugate to $[a,d_j^{\overline{\alpha_ju^{-1}}}]$. Since $\|\theta(\overline{\alpha_ju^{-1}})\|\leqslant n+1$, the area of $[a,d_j^{\overline{\alpha_ju^{-1}}}]$, by \cref{conjugate}, is bounded by $K^{n+1}$.
    	
	Applying $[a,d_j^{\overline{\alpha_ju^{-1}}}]$ to $a^u$ and $d_j^{\alpha_j}$ for $j=1,\dots, m$, we can commute all $d_j^{\alpha_j}$ to the left such that it cancels with $d_{j}^{-\alpha_j}$. Then we finally have
	\[a^u=a^{\bar u}\nu_3\nu_2\nu_1,\]
	where 
	\[\area(\nu_3)\leqslant mK^{n+1}\]
	In total, the cost of converting $a^u$ to $a^{\bar u}$ is bounded by 
	\[\area(\nu_3\nu_2\nu_1)\leqslant \area(\nu_3)+\area(\nu_2)+\area(\nu_1)\leqslant (2K)^n+2m+mK^{n+1}\leqslant (2K)^{n+1}.\]
	Note that we use the fact that $m\leqslant n$ and we can choose $K\gg 1$.
\end{proof}

\cref{organizer} provides a method for us to ``organize'' the exponent of a conjugate. In particular, combining \cref{conjugate} and \cref{organizer}, we are able to convert any word in $\llangle \mathcal A\rrangle_{H}$ to its ordered form.

\subsection{The $T$-module in Metabelian Groups}
\label{maintheorem4}

In $H_\infty$, $\llangle \mathcal A \rrangle_{H_\infty}$ is naturally a $T$-module by the conjugation action. Let $\mathcal A=\{a_1,\dots,a_m\}$. For each element $g\in \llangle \mathcal A \rrangle_{H_\infty}$, it can be written in its ordered form, i.e.
\[g=\prod_{i=1}^m a_1^{\lambda_1}a_2^{\lambda_2}\dots a_m^{\lambda_m}\in \mathbb ZT.\]
For $\lambda_i$, we always write it from high to low with respect to the order $\prec$. Then $g$ can also be regarded as an element $(\lambda_1,\dots,\lambda_m)$ in the free $T$-module with basis $a_1,\dots,a_m$. From now on, we treat an element in $\llangle \mathcal A\rrangle_{H_\infty}$ as an element in group $H_\infty$ as well as an element in the free $T$-module with basis $a_1,\dots, a_m$.

Let us first state the relation of operations between the group language and module language:
\begin{center}
  \begin{tabular}{ l | l }
    \hline
    Group & Module \\ \hline
    $(\prod_{i=1}^m a_i^{\lambda_i})(\prod_{i=1}^m a_i^{\lambda_i'})=_{H_\infty}\prod_{i=1}^m a_i^{\lambda_i+\lambda_i'}$ & $(\lambda_1,\dots,\lambda_m)+(\lambda_1',\dots,\lambda_m')=(\lambda_1+\lambda_1',\dots,\lambda_m+\lambda_m')$ \\ \hline
    $(\prod_{i=1}^m a_i^{\lambda_i})^c=_{H_\infty}\prod_{i=1}^m a_i^{c\lambda_i}$ & $c(\lambda_1,\dots,\lambda_m)=(c\lambda_1,\dots,c\lambda_m)$ \\ \hline
    $(\prod_{i=1}^m a_i^{\lambda_i})^t=_{H_\infty}\prod_{i=1}^m a_i^{t\lambda_i}$ & $t(\lambda_1,\dots,\lambda_m)=(t\lambda_1,\dots,t\lambda_m)$  \\ 
    \hline
  \end{tabular}
\end{center}
where $c\in \mathbb C,t\in T$. 

Let $\mathcal X$ be a subset of $\llangle \mathcal A \rrangle_{H_\infty}$. Then the normal closure of $\mathcal X$ in group $H_\infty$ coincides the submodule generated by $\mathcal X$ over $\mathbb ZT$. One direction is trivial, since by the table we have above, elements lie in the submodule are obtained by the group operations and conjugations. Conversely, let $g\in \llangle \mathcal A \rrangle_{H_\infty}$ then if $h\in \mathbb ZT$, $g^h$ can be obtained by finitely many scalar products and module operations and if $h\in \llangle \mathcal A\rrangle_{H_\infty}$, then $g^h=g$. The general case is a combination of those two cases. Thus $g^h$ must lie in the submodule generated by $g$. On the contrary, the subgroup generated by $\mathcal X$ coincides with the submodule generated by $\mathcal X$ over $\mathbb Z$.

Again we consider an arbitrary factor group $H$ of $G_\infty$ with the finite presentation 
\[H=\langle \mathcal A \cup \mathcal T\mid \mathcal R_1\cup \mathcal R_2\cup \mathcal R\rangle\]
where $\mathcal R$ is a finite subset of $G_\infty$. Then $H\cong G_\infty/\llangle \mathcal R \rrangle$. We now estimate the cost of relations in group $H$ to make each of the module operations above. Note that notations like $\deg (\lambda)$ and $|\lambda|$ for element $g\in \mathbb ZT$ are inherited from the polynomial ring $\mathbb Z[t_1,t_1^{-1}\dots,t_k,t_k^{-1}]$ (See in \cref{membership}).

In the following lemma, $K$ is the same constant appeared in \cref{conjugate}, which only depends on $\mathcal R_1\cup \mathcal R_2$.

\begin{lemma}
\label{thelemma}
Let $H$ be a factor group of $G_\infty$ equipped with presentation (\ref{presentation03}) then we have
\begin{enumerate}[(a)]
	\item Let 
	\[f=\prod_{i=1}^m a_i^{\lambda_i},g=\prod_{i=1}^m a_i^{\lambda_i'},\]
	and we denote $P=\max\{|\lambda_i|,|\lambda_i'|\mid i=1,\dots,m\}$, $Q=\max\{\deg(\lambda_i),\deg(\lambda_i')\mid i=1,\dots,m\}$. Then the cost of relations in $H$ of converting 
	\[fg=_{H}\prod_{i=1}^m a_i^{\lambda_i+\lambda_i'}\]
	is at most $m^2P^2 K^{2Q}$ where the right hand side is written in its ordered form.
	\item Let 
    \[f=\prod_{i=1}^m a_i^{\lambda_i},\]
    denote $P=\max\{|\lambda_i|\mid i=1,\dots,m\}$, $Q=\max\{\deg(\lambda_i)\mid i=1,\dots,m\}$. For $c\in \mathbb Z$, the cost of relations in $H$ of converting 
    $f^c$ to $\prod_{i=1}^m a_i^{c\lambda_i}$	
    is at most $(|c|-1)(m)^{2}P^{2}K^{2Q}$ where the right hand side is written in its ordered form.
	\item Let 
    \[f=\prod_{i=1}^m a_i^{\lambda_i},\]
    denote $P=\max\{|\lambda_i|\mid i=1,\dots,m\}$, $Q=\max\{\deg(\lambda_i)\mid i=1,\dots,m\}$. For $t\in T$, the cost of relations in $H$ of converting
    \[(\prod_{i=1}^m a_i^{\lambda_i})^t=_{H}\prod_{i=1}^m a_i^{t\lambda_i}, \]
    is bounded by $(mP)(2K)^{k(Q+\deg t)}$.
\end{enumerate}
\end{lemma}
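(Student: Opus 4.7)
My plan for all three parts is to transform the left-hand word into the ordered-form right-hand word by iteratively swapping adjacent conjugates, and to bound the total area by the number of swaps times the area of a single swap, where the latter is controlled by \cref{conjugate} in parts (a) and (b) and by \cref{organizer} in part (c).

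For part (a), I would view $fg = a_1^{\lambda_1}\cdots a_m^{\lambda_m} a_1^{\lambda_1'}\cdots a_m^{\lambda_m'}$ as a concatenation of at most $2mP$ single conjugates $a_i^{u}$, each with $\|\theta(u)\|\le Q$. In a first phase, for each $i$, I push the block $a_i^{\lambda_i'}$ leftward past $a_{i+1}^{\lambda_{i+1}},\ldots,a_m^{\lambda_m}$, requiring at most $m^2 P^2$ pairwise swaps of distinct-base single conjugates; in a second phase, within each grouped block $a_i^{\lambda_i}a_i^{\lambda_i'}$ I merge the two already-sorted monomial lists into one ordered list, a further $O(P^2)$ same-base swaps. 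Each swap $[a_i^u, a_j^v]$ is conjugate to $[a_i, a_j^{vu^{-1}}]$ with area at most $K^{\|\theta(vu^{-1})\|}\le K^{2Q}$ by \cref{conjugate}; summing yields the bound $m^2 P^2 K^{2Q}$.

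For part (b), I would handle $c<0$ by first inverting the expression (which is free) and then iterate (a) to compute $f^{j}=f^{j-1}\cdot f$ for $j=2,\ldots,|c|$. A naive application of the bound in (a) fails because $f^{j-1}$ has coefficients up to $(j-1)P$; the key refinement is that scalar multiplication preserves $\supp(\lambda_i)$, so both $f^{j-1}$ and $f$, viewed as concatenations of single conjugates, have at most $mP$ distinct blocks each (not $mP$ and $(j-1)mP$). Rerunning the counting argument of (a) with this observation gives a per-iteration cost of $m^2 P^2 K^{2Q}$ independent of $j$, and summing over the $|c|-1$ merges produces the stated bound.

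For part (c), the $t$-action is termwise: $f^t = \prod_{i=1}^m a_i^{t\lambda_i}$, and each individual conjugate $a_i^{tu}$ appears with $tu$ written as a concatenation of two ordered words in $F(\mathcal T)$ that is not itself ordered. I would apply \cref{organizer} to each such conjugate, converting $a_i^{tu}$ to $a_i^{\overline{tu}}$ at cost $(2K)^{n}$ where $n$ bounds $\Tr(ut)$; since the word length of an ordered word of Euclidean norm $r$ can be as large as $kr$, the trace of $u\cdot t$ lies inside a ball of radius $k(Q+\deg t)$, giving $(2K)^{k(Q+\deg t)}$ per conjugate and $(mP)(2K)^{k(Q+\deg t)}$ in total. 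The main obstacle is the bookkeeping in part (b): one must revisit the proof of (a) and extract that its cost is governed by the support sizes and not by the coefficient magnitudes, so that iterating $|c|-1$ times genuinely avoids the quadratic-in-$c$ blowup naively suggested by the stated form of (a).
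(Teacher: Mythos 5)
Your parts (a) and (c) follow the paper's argument essentially verbatim: (a) is the same block-past-block bubble sort followed by a merge, with each elementary swap bounded by $K^{2Q}$ via \cref{conjugate}; (c) conjugates the exponent freely and then invokes \cref{organizer} once per single conjugate. (Your detour through ``word length $\le kr$'' in (c) is unnecessary---$\Tr(t't)$ lies in $B_{\deg t+\deg t'}\subset B_{Q+\deg t}$ directly since the trace of a word of $\ell_1$-length $d$ has Euclidean norm at most $d$---but it only loosens the estimate, which is fine because the stated bound already has a spurious $k$ in the exponent; the paper's own proof of (c) produces $(mP)(2K)^{Q+\deg t}$ without the $k$.)

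Part (b) is where there is a genuine gap. Your ``key refinement'' asserts that the per-iteration cost of computing $f^{j}=f^{j-1}\cdot f$ stays at $m^2P^2K^{2Q}$ because the number of \emph{distinct} monomials in $\supp(\lambda_i)$ is unchanged under scaling. But the area is governed by the number of \emph{letter-level} applications of the relators $[a,b^u]$, not by the number of distinct monomial blocks. To commute a block $a_i^{cu}$ past a block $a_j^{c'v}$ requires $|c|\cdot|c'|$ applications of $[a_i,a_j^{vu^{-1}}]$, and in $f^{j-1}$ the coefficients are $(j-1)\lambda_i(u)$, so this count grows linearly in $j$. Concretely, every term in the bookkeeping of (a)---both the cross-base passes $\sum_{i<l}|\lambda_i||(j-1)\lambda_l|$ and the same-base merges $\sum_i|(j-1)\lambda_i||\lambda_i|$---picks up a factor $(j-1)$, so the $j$-th merge costs on the order of $(j-1)m^2P^2K^{2Q}$, and summing over $j=2,\dots,|c|$ yields $\Theta(|c|^2)$, not $\Theta(|c|)$. (A lower bound in the same order is visible already for $m=1$, $\lambda_1=u+v$: converting $(a_1^ua_1^v)^c$ to $a_1^{cu}a_1^{cv}$ requires $\binom{c}{2}$ applications of $[a_1^u,a_1^v]$ by the standard inversion count for $\mathbb{Z}^2$.) So your refinement does not rescue the linear-in-$|c|$ bound; no rearrangement of adjacent-swaps can, since the number of inversions is inherently $\Theta(c^2)$. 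For what it is worth, the paper itself only writes ``apply (a) $|c|-1$ times'' without confronting the growth of the coefficients, so it is subject to the same objection; the correct bound for the iterative scheme is $O(|c|^2 m^2P^2K^{2Q})$, which happily is still dominated in the proof of \cref{main2} by the final choice of $C$.
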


\begin{proof}
	\begin{enumerate}[(a)]
		\item First we consider a simpler case when $g=a_1^{\lambda_1'}$. Then it is essential to estimate the cost of converting LHS to RHS of 
		\begin{equation}
		\label{eqlemma}
			(\prod_{i=1}^m a_i^{\lambda_i})a_1^{\lambda_1'}=_H(a_1^{\mu_1})(a_2^{\lambda_2}\dots a_m^{\lambda_m}), \mu_1=\lambda_1+\lambda'_1.
		\end{equation}
		In order to commute $a_1^{\lambda_1'}$ with $a_m^{\lambda_m},\dots,a_2^{\lambda_2}$, we apply \cref{conjugate} $(m-1)$-times. Each step costs at most $PK^{2Q}$ since $\deg(\lambda_i+\lambda_1')\leqslant 2Q,|\lambda_i|,|\lambda_1'|\leqslant P$. Therefore, the cost of 
		\[(\prod_{i=1}^m a_i^{\lambda_i})a_1^{\lambda_1'}=(a_1^{\lambda_1}a_1^{\lambda_1'})(a_2^{\lambda_2}\dots a_m^{\lambda_m})\]
		is bounded by $(m-1)P^2K^{2Q}$. When it comes to the last step, i.e
		\[a_1^{\lambda_1}a_1^{\lambda_1'}=a_1^{\mu_1}.\]
		the only thing we need to do is move each term $a_1^u u\in F$ to its position corresponding to $\prec$. We in fact sort all conjugates $a_1^u$ in order. Note that those conjugates in $a_1^{\lambda_1}$ and $a_1^{\lambda_1'}$ are already in order, respectively. Thus we only need to insert each $a_1^u$ of $a_1^{\lambda_1'}$ into terms of $a_1^{\lambda_1}$. Again from \cref{conjugate}, the cost is bounded by $P^2K^{2Q}$. 
		
		Therefore, the cost of (\ref{eqlemma}) is bounded by $mP^2K^{2Q}$.
		
		In general, if $g=\prod_{i=1}^m a_i^{\lambda_i'}$. By repeating previous process $m$ times, we get an upper bound $m^2P^2K^{2Q}$. We complete the proof.
		
		\item It follows by applying (a) $|c|-1$ times.
	    \item Conjugating $t$ to each term of $a_1^{t'}$, $t'\in T$, cost zero relations. Then we basically estimate the cost of the following equatioin
	    \[a_1^{t't}=a_1^{\overline{t't}}.\]
	    By the result of \cref{organizer}, since $\Tr(t't)\subset B_{(\deg t+\deg t')}$, then the cost is bounded by $(2K)^{(\deg t+\deg t')}.$ Notice $\deg(t')\leqslant Q$, then the total cost is at most
	    \[(mP)(2K)^{(\deg t+\deg t')}\leqslant (mP)(2K)^{(Q+\deg t)}. \]
	    Here we use the fact $\Tr(t)\subset B_{\deg (t)}$ since we order elements in $\mathbb ZT$ degree lexicographically.
	\end{enumerate}
\end{proof}

Recall that $G_\infty$ is a factor group of $H_\infty$ as $G$ is a factor group of $G_\infty$. Denote the epimorphism from $H_\infty$ to $G_\infty$ induced by identity on generating set as $\psi$, and then we have the following homomorphism chain:
\[\begin{tikzcd}
    H_\infty \arrow[r,"\psi"] & G_\infty \arrow[r,"\varphi"] & G.
\end{tikzcd}\]
Thus $\ker \psi=\llangle \mathcal R_2\rrangle_{H_\infty}, \ker(\varphi\circ \psi)=\llangle \mathcal R_2\cup \mathcal R_3\rrangle_{H_\infty}.$ They are all normal subgroups in $H_\infty$ as well as submodules in $\llangle\mathcal A\rrangle_{H_\infty}$. $H_\infty$ contains a free module structure while each of $G$ and $G_\infty$ contain a factor module of it. Eventually we will convert the word problem to a membership problem of a submodule in $\llangle \mathcal A \rrangle_{H_\infty}$.
\subsection{Proof of \cref{main2}}
\label{maintheorem5}

Now we are ready to prove \cref{main2}. It is enough to show that for any given word $w=1$ of length $n$, $w$ can be written as a product of at most $C^{n^{2k}}$ conjugates of relators for some constant $C$. Since $G$ is a factor group of $H_\infty$, $w=_G 1$ if and only if $w\in \ker(\varphi \circ \psi)=\llangle \mathcal R_2\cup \mathcal R_3 \rrangle_{H_\infty}$. Note that $\llangle \mathcal R_2\cup  \mathcal R_3\rrangle_{H_\infty}\subset  \llangle \mathcal A \rrangle_{H_\infty}$. Recall that $\llangle\mathcal A\rrangle_{H_\infty}$ has a natural module structure: it is a free $T$-module with basis $a_1,\dots, a_m$. By previous discussion, $\llangle \mathcal R_2\cup \mathcal R_3\rrangle_{H_\infty}$ coincides the submodule generated by $\mathcal R_2\cup \mathcal R_3$ over $\mathbb ZT$. Let $\mathcal R_4=\{f_1,f_2,\dots,f_l\}$ be the Gr\"{o}bner basis of the submodule generated by $\mathcal R_2\cup \mathcal R_3$. We then add $\mathcal R_4$ to our presentation (\ref{presentation1}), and in addition we assume that all relators of $\mathcal R_i, i=2,3,4$ are written in their ordered form. Note that $\mathcal R_2\cup \mathcal R_3$ and $\mathcal R_4$ generates the same submodule in $\llangle \mathcal A\rrangle_{H_\infty}$. It implies that $\llangle \mathcal R_2\cup \mathcal R_3\rrangle_{H_\infty}=\llangle \mathcal R_4\rrangle_{H_\infty}$. We obtained an alternating presentation of $G$ as 
	\begin{equation}
	\label{newpresentation}
		G=\langle \mathcal A\cup \mathcal T\mid \mathcal R_1\cup \mathcal R_2\cup \mathcal R_3\cup \mathcal R_4\rangle.
	\end{equation}
	Although $\mathcal R_4$ is equivalent to $\mathcal R_2\cup \mathcal R_3$, it is convenient to keep $\mathcal R_2$, $\mathcal R_3$ in our presentation since all the estimation we have done previously are based on $\mathcal R_2\cup\mathcal R_3$.
	
    Note that $G$ is a factor group of $G_\infty$ and with the given presentation \cref{abelian}, \cref{conjugate}, \cref{organizer} and \cref{thelemma} all hold for $G$. 
    
    Since Dehn function is a quasi-isometric invariant then it enough for us to prove \cref{main2} using the presentation (\ref{newpresentation}).
    
\begin{proof}[Proof of \cref{main2}]
We start with a word $w\in G$ such that $|w|=n, w=_G 1$. WLOG we may assume 
\[w=u_1b_1u_2b_2\dots u_sb_su_{s+1}\]
where $u_i\in F=F(\mathcal T), b_i\in \mathcal A^{\pm 1}$ and $s+\sum_{i=1}^{s+1}|u_i|=n$. Let $v_i=(u_1\dots u_i)^{-1}$ for $i=1,\dots, s$ and $\nu=u_1 u_2\dots u_{s+1}$. Then we have 
\[w=w_1:=b_1^{v_1}b_2^{v_2}\dots b_s^{v_s}\nu.\]
The equality holds in the free group generated by $\mathcal A\cup \mathcal T$ thus the cost of relations converting $w$ to $w_1$ is 0. Since $s+\sum_{i=1}^{s+1}|u_i|=n$, in particular, we have that $s\leqslant n$. Moreover $|v_i|=\sum_{j=1}^i |u_j|\leqslant n$ hence $\Tr(\theta(v_i))\subset B_n, i=1,2,\dots,n$. 
    Next since $w_1=_G 1$, $\pi(w_1)=\pi(v_{s+1})=1 $. By \cref{abelian}, 
\[\nu=\prod_{i=s+1}^{s'} b_i^{v_i}\]
where $s'-s\leqslant |\nu|^2\leqslant n^2$, $b_i\in \mathcal A^{\pm 1}$, and $\Tr(\theta(v_i))\subset B_n, i=s+1,\dots, s'$. By \cref{abelian}, the cost of converting $\nu$ to the right hand side is bounded by $|\nu|^2\leqslant n^2$. 

Thus we let 
\[w_2:=\prod_{i=1}^{s'} b_i^{v_i}, s'\leqslant n^2+n, \Tr(\theta(v_i))\subset B_n, i=1,\dots, s'.\]
And the cost from $w_2$ to $w_1$ is bounded by $n^2$.

Next, note that all $v_i$'s are words in $F$. With the help of \cref{organizer}, we are able to organize $v_i$ to its image in $\bar F$. More precisely, we let
\[w_3:=\prod_{i=1}^{s'} b_i^{\bar v_i}, s'\leqslant n^2+n, \|\theta(\bar v_i)\|\leqslant n.\]
Also followed by \cref{organizer}, $w_2=_G w_3$. Let us estimate the cost of converting $w_2$ to $w_3$. To transform $w_2$ to $w_3$, we need apply \cref{organizer} to each $b_i^{v_i}$ once. Since $\area(b_i^{v_i}b_i^{-\bar v_i})\leqslant (2K)^n$ which provided by $\Tr(\theta(v_i))\subset B_n$, each transformation costs $(2K)^n$ relations. We have in total $s'\leqslant n+n^2$ many conjugates to convert therefore the cost is bounded by $(n^2+n)(2K)^n$.

Now let $w_4$ be the ordered form of $w_3$, which in fact is also the ordered form of $w$, i.e.
\[w_3=_G w_4:=\prod_{i=1}^m a_i^{\mu_i}\]
where $\mu_i$ are ordered under $\prec$. By the discussion in \cref{maintheorem2}, we obtain the ordered form just by rearranging all conjugates of $\mathcal A^{\pm 1}$. Note that because $\|\theta(\bar v_i)\|\leqslant n$ for all $i$, it cost at most $K^{2n}$ relations to commute any two consecutive conjugates $b_i^{\bar v_i}$ and $b_j^{\bar v_j}$ by \cref{conjugate}. To sort $s'$ conjugates we need commute $s'^2$ times. Therefore the number of relations need to commute $w_3$ to $w_4$ is bounded above that $s'^2K^{2n}\leqslant (n^2+n)^2K^{2n}$.

The only thing remains is to compute the area of $w_4$. Recall that $w_4$ can be regarded as an element in a free $T$-module generated by $a_1,\dots,a_m$. $w_4=_G 1$ implies that either $w_4=_{H_\infty} 1$ or it lies in the submodule generated by $\mathcal R_4=\{f_1,f_2,\dots,f_l\}$ which is the Gr\"{o}bner basis of the submodule generated by $\mathcal R_2\cup \mathcal R_3$. If $w=_{H_\infty} 1$ then $\mu_i=\emptyset$ for all $i=1,\dots, m$. In this case $\area(w_4)=0$. Thus 
\[\area(w)\leqslant n^2+(n^2+n)(2K)^n+(n^2+n)^2K^{2n}.\]
We are done with this case.

Now let us consider the case $w\in \llangle \mathcal R_4\rrangle_{H_\infty}\setminus \{1\}$. Let $K$ be a constant large enough to satisfy both \cref{division} and \cref{conjugate}. As an element in the $T$-module, $\deg w_4\leqslant n$ since $\|\theta(\bar v_i)\|\leqslant n$ for all $i$. Also recall that for an element $\alpha\in \mathbb ZT$, $|\alpha|$ is defined to be the $l_1$-norm  of it regarded as a finite suppported function from $T$ to $\mathbb Z$. Thus $|w_4|$ represents the number of conjugates in $w_4$ which is $s'$. Then by \cref{division} we have 
\[w_4=_{H_\infty} \prod_{i=1}^l f_i^{\alpha_i}, f_i=a_1^{\mu_{i1}}a_2^{\mu_{i2}}\dots a_m^{\mu_{im}}\in \mathcal R_4, \deg(f_i^{\alpha_i})\leqslant n, \sum_{i=1}^l |\alpha_i|\leqslant s'K^{n^{2k}}\leqslant (n^2+n)K^{n^{2k}}.\]
where $\mu_i=\sum_{j=1}^l \alpha_j\mu_{ji}$ in $\mathbb ZT$. Note that $f_i^{\alpha_i}$ is the product consisting of exactly $|\alpha_i|$ many relators. In conclusion we have 
\[\area(\prod_{i=1}^l f_i^{\alpha_i})\leqslant \sum_{i=1}^l |\alpha_i|\leqslant (n^2+n)K^{n^{2k}}.\]
Last, let us estimate the cost of converting $\prod_{i=1}^l f_i^{\alpha_i}$ to $w_4$. This process consists of two different steps: 1. converting all $f_i^{\alpha_i}$'s to their ordered form; 2. adding the $l$ terms of ordered $f_i^{\alpha_i}$.

To transform $f_i^{\alpha_i}$ to its ordered form, we write 
\[\alpha_i=\sum_{u\in \supp \alpha_i} \alpha_i(u)u.\]
Let us denote $P=\max_{i=1}^l |f_i|, Q=\max_{i=1}^l \deg(f_i)$.
Then 
\begin{align}
\label{w4}
	f_i^{\alpha_i}&=f_i^{\sum_{u\in \supp{\alpha_i}}\alpha_i(u)u}
=\prod_{\supp \alpha_i} f_i^{\alpha_i(u)u}
=\prod_{\supp \alpha_i} f_{i,u} 
=a_1^{\mu_{i1}'}a_2^{\mu_{i2}'}\dots a_m^{\mu_{im}'}=:f_i',
\end{align}
where $f_{i,u}$ is the ordered form of $f_i^{\alpha_i(u)u}$ and $u_{ij}'=\alpha_i\mu_{ij}$ hence $f_i'$ is the ordered form of $f_i^{\alpha_i}$. The first two equalities above hold in the free group $F(\mathcal A\cup \mathcal T)$ thus the cost is 0. In the third equality, applying \cref{thelemma} (b) and (c), the cost of converting $f_i^{\alpha_i(u)u}$ to $f_{i,u}$ is bounded by $m|f_i|(2K)^{k(\deg f_i+\deg u)}+(|\alpha_i(u)|-1)m^2|f_i|^2K^{2(\deg f_i+\deg u)}$. Here we first conjugate $u$ to $f_i$ then add $|\alpha_i(u)|$ terms of $f_i^u$. Because $\deg u\leqslant \deg \alpha_i, \sum_{u\in \supp \alpha_i}|\alpha_i(u)|=|\alpha_i|,|\supp \alpha_i|\leqslant |\alpha_i|$. Consequently the cost of the third equality of (\ref{w4}) is bounded by 
\begin{align*}
	&\sum_{u\in \supp \alpha_i} (m|f_i|(2K)^{k(\deg f_i+\deg u)}+(|\alpha_i(u)|-1)m^2|f_i|^2K^{2(\deg f_i+\deg u)})\\
	&\leqslant \sum_{u\in \supp \alpha_i} (m|f_i|(2K)^{k(\deg f_i+\deg \alpha_i)}+(|\alpha_i(u)|-1)m^2|f_i|^2K^{2(\deg f_i+\deg \alpha_i)})\\
	&=|\supp \alpha_i|m|f_i|(2K)^{k(\deg f_i+\deg \alpha_i)}+\sum_{u\in \supp \alpha_i} (|\alpha_i(u)|-1)m^2|f_i|^2K^{2(\deg f_i+\deg \alpha_i)}\\ 
	&= |\supp \alpha_i|m|f_i|(2K)^{k(\deg f_i+\deg \alpha_i)}+(|\alpha_i|-|\supp \alpha_i|)m^2|f_i|^2K^{2(\deg f_i+\deg \alpha_i)}\\
	&\leqslant |\alpha_i|m|f_i|(2K)^{k(\deg f_i+\deg \alpha_i)}+|\alpha_i|m^2|f_i|^2K^{2(\deg f_i+\deg \alpha_i)}\\
	&=|\alpha_i|(m|f_i|(2K)^{k(\deg f_i+\deg \alpha_i)}+m^2|f_i|^2K^{2(\deg f_i+\deg \alpha_i)})\\
	&\leqslant |\alpha_i|(mP(2K)^{kn}+m^2P^2K^{2n}).
\end{align*}
The last inequality is obtained by the condition $\deg(f_i^{\alpha_i})\leqslant n$, i.e $\deg f_i+\deg \alpha_i\leqslant n$.

The forth equality of (\ref{w4}) is adding all $f_{i,u}$'s up. Since 
\[\deg f_{i,u}\leqslant \deg {f_i^{\alpha_i}}\leqslant n,|f_{i,u}|\leqslant |\alpha_i(u)||f_i|\leqslant |\alpha_i||f_i|\leqslant |\alpha_i|P,\]
by \cref{thelemma} (a), the cost of adding $|\supp \alpha_i|$ terms of $f_{i,u}$ is bounded by $(|\supp \alpha_i|-1)m^2 (|\alpha_i|P)^2 K^{2n}$. Here we use the fact that the size of the addition of any step is bounded by $|f_i^{\alpha_i}|$. Therefore the total number of relations we need to convert each $f_i^{\alpha_i}$ to its order form $f_i'$ is bounded by 
\begin{align*}
|\alpha_i|(mP(2K)^{kn}+m^2P^2K^{2n})+(|\supp \alpha_i|-1)m^2 (|\alpha_i|P)^2 K^{2n}\\
\leqslant |\alpha_i|(mP(2K)^{kn}+(1+|\alpha_i|^2)m^2P^2K^{2n}).	
\end{align*}
In general, the cost of converting all $f_i^{\alpha_i}$'s to their order forms is bounded by 
\begin{align*}
	&\sum_{i=1}^l |\alpha_i|(mP(2K)^{kn}+(1+|\alpha_i|^2)m^2P^2K^{2n}) \\
	=&(mP(2K)^{kn}+m^2P^2K^{2n})(\sum_i^l |\alpha_i|)+m^2P^2K^{2n}\sum_{i=1}^l (|\alpha_i|^3)\\
	\leqslant & (mP(2K)^{kn}+m^2P^2K^{2n})(n^2+n)K^{n^{2k}}+m^2P^2K^{2n}(n^2+n)^3K^{3n^{2k}}.
\end{align*}

The next step, as described above, is to add all $f_i'$ up. We have that $|f_i'|\leqslant |\alpha_i||f_i|$ and $\deg f_i'\leqslant n$ for all $i=1,2,\dots, l.$ Moreover, the size of any partial product $\sum_{i=1}^{l'} f_i', 1\leqslant l'\leqslant l$ is controlled by the following inequalities:
\[|\sum_{i=1}^{l'} f_i'|\leqslant \sum_{i=1}^{l'} |\alpha_i||f_i|\leqslant P\sum_{i=1}^{l'} |\alpha_i|\leqslant P(n^2+n)K^{n^{2k}},\deg(\sum_{i=1}^{l'} \alpha_i f_i)\leqslant n.\]
This is similar to add $f_{i,u}$'s. By \cref{thelemma} (a), the cost of the $(|l|-1)$ additions is bounded by 
\[(l-1)m^2(P(n^2+n)K^{n^{2k}})^2K^{2n}\leqslant (l-1)m^2(n^2+n)^2P^2K^{2n^{2k}+2n}.\]
Now we need to verify the process of those steps above indeed result $w_4$. This is provided by the fact $\mu_i=\sum_{j=1}^l \alpha_j\mu_{ji}=\sum_{j=1}^l \mu_{ij}'$ and eventually following \cref{thelemma} we have 
\[\prod_{i=1}^l f_i^{\alpha_i}=\prod_{i=1}^l f_i'=\prod_{i=1}^l (a_1^{\mu_{i1}'}a_2^{\mu_{i2}'}\dots a_m^{\mu_{im}'})=\prod_{j=1}^m a_j^{\sum_{j=1}^l \mu_{ij}'}=a_1^{\mu_1}a_2^{\mu_2}\dots a_m^{\mu_m}=w_4.\]
By our estimation, the cost of the first equality is bounded by $(mP(2K)^{kn}+m^2P^2K^{2n})(n^2+n)K^{n^{2k}}+m^2P^2Q^{2n}(n^2+n)^3K^{3n^{2k}}$ and the cost of the third equality is bounded by $(l-1)m^2(n^2+n)^2P^2K^{2n^{2k}+2n}.$ Other equalities hold in the free group hence no cost. Therefore 
\begin{align*}
	\area(w_4)=&(n^2+n)K^{n^{2k}}+(mP(2K)^{kn}+m^2P^2K^{2n})(n^2+n)K^{n^{2k}}+m^2P^2K^{2n}(n^2+n)^3K^{3n^{2k}}\\
	&+(l-1)m^2(n^2+n)^2P^2K^{2n^{2k}+2n}.
\end{align*}
Now we choose a constant $C>K$ large enough such that 
\begin{align*}
	&(n^2+n)K^{n^{2k}}+(mP(2K)^{kn}+m^2P^2K^{2n})(n^2+n)K^{n^{2k}}+m^2P^2K^{2n}(n^2+n)^3K^{3n^{2k}}\\
	&+(l-1)m^2(n^2+n)^2P^2K^{2n^{2k}+2n}\\
	&\leqslant C^{n^{2k}}
\end{align*}
It is clear that such $C$ exists, for example we can choose $C$ to be $4m^2P^2QK$. Note that $P,Q$ only depends on $f_1,f_2,\dots, f_l$, hence $\mathcal R_4$, and so does $K$. Therefore $C$ is independent of $w$. 

In conclusion, we start with $w=_G 1$ of length at most $n$. By converting it four times, we end up with a word $w_4$, of which area is bounded by $C^{n^{2k}}$. Thus

\[\begin{tikzcd}[row sep=huge, column sep = huge]
    w \arrow[r,"0"] & w_1 \arrow[r,"\leqslant n^2"] & w_2 \arrow[r,"\leqslant (n+n^2)(2K)^n"] & w_3 \arrow[r,"\leqslant (n+n^2)^2K^{2n}"] & w_4.
\end{tikzcd}\]

Summing up all the cost from $w_1$ to $w_4$ and with the fact $C>K$, we conclude that the area of $w$ is bounded above by 
\[\area(w)\leqslant C^{n^{2k}}+(n+n^2)^{2}C^{2n}+(n+n^2)(2C)^{n}+n^2.\] 
This completes the proof.
\end{proof}
\section{Relative Dehn Functions for Metabelian Groups}
\label{relativedehn}

\subsection{The Cost of Metabelianness}
\label{relativedehn1}

First, we state an important consequence of \cref{conjugate}. Consider a finitely presented group $G$ with a short exact sequence
\[1\to A \hookrightarrow G \twoheadrightarrow T \to 1, \]
where $A, T$ are abelian groups. By definition, $G$ is metabelian. The metabelianness is provided by abelianness of $A$. More precisely, if we let $\mathcal A$ be a generating set of $A$ and $\mathcal T$ be a set in $G$ such that their image in $T$ generates $T$, the abelianness of $A$ is given by all commutative relations like $[a^u,b^v]=1$ for all $a,b\in \mathcal A, u,v\in F(\mathcal{T})$. It follows that all commutators are commutative. Recall \cref{conjugate}, then we immediately have 

\begin{theorem}
\label{metabelianness}
	The metabelianness of a finitely presented matabelian group $G$ costs at most exponentially many relations with respect to the length of the word, i.e. there exists a constant $C$ such that 
	\[\area([[x,y],[z,w]])\leqslant C\cdot 2^{|[[x,y],[z,w]]|}, \forall x,y,z,w\in G.\]
\end{theorem}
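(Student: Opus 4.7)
The plan is to express $[[x,y],[z,w]]$ as a product of at most polynomially many ``basic'' commutators of the form $[a,b^u]$ with $a,b\in\mathcal A$ and $\|\theta(u)\|$ bounded linearly in $n:=|[[x,y],[z,w]]|$, and then to invoke \cref{conjugate} on each. Set up as in \cref{maintheorem1}: I may assume $G$ has the presentation (\ref{newpresentation}), and I observe that $|x|,|y|,|z|,|w|\leqslant n$, so $|[x,y]|,|[z,w]|=O(n)$. Since $T=G/A$ is abelian, both inner commutators lie in $A$, i.e.\ $\pi([x,y])=\pi([z,w])=1$.

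First I rewrite each inner commutator as a product of conjugates of elements of $\mathcal A$ using the transformation $w\mapsto w_1\mapsto w_2$ from the beginning of the proof of \cref{main2}: shuffling the $\mathcal A$-letters to the left as conjugates by words in $F(\mathcal T)$ is a free-group identity and costs zero relations, and the leftover $\mathcal T$-word is $\pi$-trivial so \cref{abelian} converts it to a further product of conjugates at cost $O(n^2)$. This yields
\[
[x,y]=_G\prod_{i=1}^s b_i^{v_i},\qquad [z,w]=_G\prod_{j=1}^t c_j^{w_j},
\]
with $s,t=O(n^2)$, $b_i,c_j\in\mathcal A^{\pm 1}$, and $\|\theta(v_i)\|,\|\theta(w_j)\|\leqslant n$, at total cost $O(n^2)$ relations.

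Using only the free-group identities $[ab,c]=[a,c]^b[b,c]$ and $[a,bc]=[a,c][a,b]^c$ iteratively, I expand
\[
[[x,y],[z,w]]=\prod_{\alpha}\bigl[b_{i_\alpha}^{v_{i_\alpha}},c_{j_\alpha}^{w_{j_\alpha}}\bigr]^{g_\alpha}
\]
into at most $st=O(n^4)$ factors, for suitable conjugators $g_\alpha\in F(\mathcal A\cup\mathcal T)$; this expansion is a literal free-group identity and so costs zero relations. For each factor I apply the free-group identity $[b^v,c^w]=[b,c^{wv^{-1}}]^v$, again at no cost, reducing to $[b,c^u]$ with $\|\theta(u)\|\leqslant\|\theta(w)\|+\|\theta(v)\|\leqslant 2n$. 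By \cref{conjugate} each such basic commutator has area at most $K^{2n}$, and conjugation preserves area. Summing,
\[
\area([[x,y],[z,w]])\leqslant O(n^2)+O(n^4)\cdot K^{2n},
\]
which is bounded by $C\cdot 2^{Cn}$ for a constant $C$ depending only on the presentation; up to the equivalence $\approx$ (and up to absorbing a constant into the base), this is the exponential bound in the statement.

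The only delicate point is verifying that the iterative commutator expansion and the reduction $[b^v,c^w]=[b,c^{wv^{-1}}]^v$ really are free-group identities, so that all the ``rearrangement'' contributes nothing to the area and only the $O(n^4)$ applications of \cref{conjugate} matter; both checks are routine direct computations in $F(\mathcal A\cup\mathcal T)$, and once they are in hand the theorem reduces to the elementary estimate $n^4K^{2n}\preccurlyeq 2^n$.
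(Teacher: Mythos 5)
Your proposal is correct in substance and lands on the same core idea as the paper — reduce to commutators $[a,b^u]$ with $a,b\in\mathcal A$ and $u$ a bounded $\mathcal T$-word, then invoke \cref{conjugate} — but it gets there along a cleaner path. The paper first applies the commutator identities to decompose $[x,y]$ into factors $[a,b]^u$ with generator letters $a,b$ and \emph{arbitrary} conjugators $u\in G$, splits into three cases, and then pays a separate exponential cost (a variant of \cref{organizer}) to replace $a^u$ by $a^{w_2}$ with $w_2\in F(\mathcal T)$. You instead use the free-group rearrangement $w\mapsto\prod b_i^{v_i}\cdot\nu$ from \cref{maintheorem2}, which produces $\mathcal T$-word conjugators from the start, at zero cost for the shuffle and $O(n^2)$ for cleaning up $\nu$ via \cref{abelian}. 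This sidesteps the paper's $C_2\cdot 2^{|u|}$ reduction step entirely; the only exponential contribution in your version comes from \cref{conjugate} itself, applied $O(n^4)$ times. Both approaches yield the same bound, but your intermediate bookkeeping is tighter and the decomposition step is more transparent.

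Two small gaps to flag. First, \cref{conjugate} is stated with the condition $\|\theta(u)\|<n$, but its proof (and the proof of \cref{organizer}, which uses it) is written for $u\in\bar F$, i.e.\ ordered words. Your reduction produces $u=wv^{-1}$, which is a concatenation of two $\mathcal T$-words with bounded trace but is typically not ordered. To be rigorous you should first apply \cref{organizer} to replace $c^{wv^{-1}}$ by $c^{\overline{wv^{-1}}}$ at cost $(2K)^{2n}$, or else re-prove \cref{conjugate} under the weaker hypothesis $\Tr(\theta(u))\subset B_n$. Either way the bound remains exponential, so this is a routine fix, not a fatal problem — and in fact the paper's own proof of \cref{metabelianness} has the same loose end. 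Second, you say you ``may assume $G$ has the presentation (\ref{newpresentation})'', but that presentation was built under the assumption that $T=G/A$ is free abelian; passing to a finite-index subgroup as in \cref{maintheorem1} changes both the group and the generating set, while the statement quantifies over all $x,y,z,w\in G$. The paper handles this by writing down a modified presentation for $G$ directly when $T$ has torsion (adding the power relations $t_l^{n_l}=a_l$ and letting $\theta$ kill the torsion generators), and your argument goes through verbatim with that presentation; you should say so explicitly rather than invoking the reduction from \cref{maintheorem1}, which only preserves the Dehn function up to equivalence and not pointwise area estimates on words of $G$ itself.
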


\begin{proof}
	Let $k$ be the minimal torsion-free rank of an abelian group $T$ such that there exists an abelian normal subgroup $A$ in $G$ satisfying $G/A\cong T$. The projection of $G$ onto $T$ is denoted by $\pi: G\to T$. 

If $k=0$, $G$ has a finitely generated abelian subgroup of finite index. Then the result follows immediately. 

If $k>0$, we first consider the case that $T$ is free abelian. let $\mathcal T=\{t_1,\dots,t_k\}\subset G$ such that $\{\pi(t_1),\dots,\pi(t_k)\}$ forms a basis for $T$ and $\mathcal A$ be a finite subset of $G$ such that it contains all commutators $a_{ij}=[t_i,t_j]$ for $1\leqslant i<j \leqslant k$ and generates the $T$-module $A$. Then $\mathcal A\cup \mathcal T$ is a finite generating set for the group $G$.
Recall that $G$ has a finite presentation as follows
\[G=\langle a_{1},a_2,\dots,a_m, t_1,t_2,\dots,t_k\mid \mathcal R_1\cup \mathcal R_2\cup \mathcal R_3 \cup \mathcal R_4\rangle,\]
where
\begin{align*}
	\mathcal R_1&=\{[t_i,t_j]=a_{ij} \mid 1\leqslant i<j\leqslant k\};\\
	\mathcal R_2&=\{[a,b^u]=1 \mid a,b\in \mathcal A,u\in \bar F, \|\theta(u)\|<R\};\\
	\mathcal R_3&=\{\prod_{u\in \bar F}(a^{\lambda(\hat u)})^u=a\mid a\in \mathcal A,\lambda\in \Lambda\cap C(A)\}\cup \{\prod_{u\in \bar F}(a^{\lambda(\hat u)})^{u^{-1}}=a\mid a\in \mathcal A,\lambda\in \Lambda\cap C(A^*)\};\\
\end{align*}
and $\mathcal R_4$ is the finite set generating $\ker \varphi$. All the notations are the same as in \cref{maintheorem1}. 
 
By \cref{conjugate}, we have that there exists a constant $C_1$ such that 
\[\area{[a,b^u]}\leqslant C_1\cdot 2^{|[a,b^u]|}, a, b\in \mathcal A, u\in F(\mathcal T).\]

Now let $x,y,z,w$ be elements in $G$ and $n=|[[x,y],[z,w]]|$. We use two commutator identities $[a,bc]=[a,c][a,b]^c$ and $[ab,c]=[a,c]^b[b,c]$ to decompose $[x,y]$ and $[z,w]$ into products of $[a,b]^u$ where $a,b\in \mathcal A\cup \mathcal A^{-1}\cup \mathcal T \cup \mathcal T^{-1}, u\in G$. There are three cases to be considered. 
\begin{enumerate}
	\item If $a,b\in \mathcal A \cup \mathcal A^{-1}$, $[a,b]^u=_G 1$ and the cost for converting $[a,b]^u$ to 1 is 1.
	\item If $a,b\in \mathcal T \cup \mathcal T^{-1}$, we have two cases. If $a,b\in \{t_i,t_i^{-1}\}$ for some $i$, $[a,b]^u=_G 1$ with no cost. If $a\in \{t_i,t_i^{-1}\}$ and $b\in \{t_j, t_j^{-1}\}$ where $i\neq j$, $[a,b]^u=c^{\varepsilon u'}$, where $c\in \mathcal A\cup \mathcal A^{-1}, \varepsilon \in \{\pm 1\}, |u'|\leqslant |u|+1$. This is due to cases like $[t_i^{-1},t_j]=[t_i,t_j]^{-t_i^{-1}}$. The cost of converting $[a,b]^u$ to $c^{\varepsilon u'}$ is 1.
	\item If $a\in \mathcal A \cup \mathcal A^{-1}, b\in \mathcal T \cup \mathcal T^{-1}$ (or $b\in \mathcal A \cup \mathcal A^{-1}, a\in \mathcal T \cup \mathcal T^{-1}$), then $[a,b]=_G aa^{b}$ (resp. $[a,b]=b^{a^{-1}}b$ ). Thus $[a,b]^u=_G a^{u}a^{bu}$ (resp. $[a,b]^u=b^{a^{-1}u}b^{u}$). The cost of converting is 0.
\end{enumerate}

It follows that $[x,y] =_G\prod_{i=1}^l b_i^{\varepsilon_i u_i}$, where $l\leqslant 2|x||y|, b_i\in \mathcal A, \varepsilon_i\in \{\pm 1\}, u_i\in G, |u_i|\leqslant |x|+|y|$. The cost of converting $[x,y]$ to $\prod_{i=1}^l b_i^{\varepsilon_i u_i}$ is bounded by $|x||y|$. 

Let $u$ be a word in $G$, we claim that $u=w_1w_2\prod_{i=1}^p c_i^{v_i}$ where $w_1\in F(\mathcal A), w_2\in F(\mathcal T), c_i\in \mathcal A\cup \mathcal A^{-1}, v_i\in F(\mathcal T)$. The claim can be proved by always choosing to commute the left most pair of $ta$ where $t\in \mathcal T\cup \mathcal T^{-1}, a\in \mathcal A\cup \mathcal A^{-1}$. Then for an element $a\in \mathcal A$, we have 
\[a^u=_{F(\mathcal A\cup \mathcal T)} a^{w_1w_2\prod_{i=1}^p c_i^{v_i}}=_G a^{w_2}, w_1\in F(\mathcal A), w_2\in F(\mathcal T).\]
Note that $|w_2|<|u|$. Similar to \cref{organizer}, there exists a constant $C_2$ such that the cost of the second equality in terms of relations is bounded $C_2\cdot 2^{|u|}$.

Therefore $[x,y]=_G\prod_{i=1}^l b_i^{\varepsilon_i u_i'}, $ where $l\leqslant 2|x||y|, b_i\in \mathcal A, \varepsilon_i\in \{\pm 1\}, u_i'\in F(\mathcal T)$. The cost is bounded by $2n^2C_2 2^{n}$. Consequently, $[[x,y],[z,w]]$ can be write as a product of at most $8n^2$ conjugates of elements in $\mathcal A$ at a cost of at most $8n^2C_22^{n}$. Last, converting this product to 1 costs at most $(8n^2)^2C_1 2^n$ by \cref{conjugate}. The theorem is proved in this case.

If $T$ is not free abelian, we suppose that  $\mathcal T=\{t_1,t_2,\dots,t_k, t_{k+1},t_{k+2},\dots,t_s\}$ such that the set $\mathcal T_0:=\{\pi(t_{k+1}),\pi(t_{k+2}),\dots,\pi(t_s)\}$ generates a finite abelian group and $\{\pi(t_{1}),\pi(t_{2}),\dots,\pi(t_k)\}$ generates $\mathbb Z^k$. Then we can write down a presentation of $G$ as following:
 \[G=\langle a_{1},a_2,\dots,a_m, t_1,t_2,\dots,t_s\mid \mathcal R_1\cup \mathcal R_2\cup \mathcal R_3 \cup \mathcal R_4\rangle,\]
where
\begin{align*}
	\mathcal R_1&=\{[t_i,t_j]=a_{ij}, t_l^{n_l}=a_l \mid 1\leqslant i<j\leqslant s,k+1\leqslant l\leqslant s\};\\
	\mathcal R_2&=\{[a,b^u]=1 \mid a,b\in \mathcal A,u\in \bar F, \|\theta(u)\|<R\};\\
	\mathcal R_3&=\{\prod_{u\in \bar F}(a^{\lambda(\hat u)})^u=a\mid a\in \mathcal A,\lambda\in \Lambda\cap C(A)\}\cup \{\prod_{u\in \bar F}(a^{\lambda(\hat u)})^{u^{-1}}=a\mid a\in \mathcal A,\lambda\in \Lambda\cap C(A^*)\};\\
\end{align*}
and $\mathcal R_4$ is the finite set generating $\ker \varphi$. Note that $\theta: \bar F\to \mathbb R^k$ kills all $t_l, l>k$. The rest of the proof is the same as the case when $T$ is free abelian. 	

\end{proof}

\subsection{The Relative Dehn Functions of Metabelian Groups}
\label{relativedehn2}

Recall that a set of groups form a \emph{variety} if it is closed under subgroups, epimorphic images, and unrestricted direct products. The set of metabelian groups naturally form a variety, denoted by $\mathcal S_2$, since metabelian groups satisfy the identity $[[x,y],[z,w]]=1$. Inside a variety, we can talk about relative free groups and relative presentations. Firstly, a metabelian group $M_k$ is \emph{free of rank $k$} if it satisfies the following universal property: every metabelian group generated by $k$ elements is an epimorphic image of $M_k$. It is not hard to show that $M_k\cong F(k)/F(k)''$, where $F(k)$ is a free group of rank $k$ (in the variety of all groups). 

Next, we shall discuss the relative presentations. Recall that the usual presentation of $G$ consists of a free group $F$ and a normal subgroup $N$ such that $G\cong F/N$. For relative presentations, we shall replace the free group by the relative free group. Now let $G$ be a metabelian group generated by $k$ elements, then there exists a epimorphism $\varphi: M_k\to G$, where $M_k$ is generated by $X=\{x_1,x_2,\dots,x_k\}$. We immediately have that $G\cong M_k/\ker \varphi$. Note that $\ker \varphi$ is a normal subgroup of $M_k$, then it is a normal closure of a finite set. We let $R=\{r_1,r_2,\dots,r_m\}$ to be the finite set whose normal closure is the kernel of $\varphi$. Therefore we obtain a \emph{relative presentation} of $G$ in the variety of metabelian groups
\[G=\langle x_1,x_2,\dots,x_k\mid r_1,r_2,\dots,r_m\rangle_{\mathcal S_2}.\]
The notation $\langle \cdot \rangle_{\mathcal S_2}$ is used to indicate that the presentation is relative to the variety of metabelian groups $\mathcal S_2$. Here, the subscript two stands for the derived length two. We denote by $\mathcal P$ the relative presentation $\langle X\mid R\rangle_{\mathcal S_2}$. Note that if $G$ is finitely presented, then the finite presentation in the usual sense is also a relative presentation, with some possible redundant relations. 

Let us give an example of relation presentation of a metabelian group which is not finitely presented. $H_\infty$, the group we introduce in \cref{finitepresented2}, is a free metabelian group of rank $k$. It has two different relative presentations depending on how many generators we choose. 
\[H_\infty=\langle t_1,t_2,\dots, t_k\rangle_{\mathcal S_2}=\langle a_{ij},t_1,t_2,\dots, t_k\mid a_{ij}=[t_i,t_j],1\leqslant i<j\leqslant k\rangle_{\mathcal S_2}.\]

Let $w$ be a word in $G$ such that $w=_G 1$. Then $w$ lies in the normal closure of $R$. Thus $w$ can be written as 
\[w=_{M_k} \prod_{i=1}^l r_i^{f_i} \text{ where }r_i\in R\cup R^{-1},f_i\in M_k.\]
The smallest possible $l$ is called the relative area of $w$, denoted by $\tilde\area_\mathcal P(w)$. The difference between the area and the relative area is that we take the equality in different ambient groups, one in free groups and the other in free metabelian groups. Consequently, the Dehn function relative to the variety of metabelian groups with respect to the presentation $\mathcal P$ is defined as  
\[\tilde\delta_\mathcal P(n)=\sup\{\tilde\area_\mathcal P(w)\mid |w|_{M_k}\leqslant n\}.\]
Here $|\cdot|_{M_k}$ is the word length in $M_k$. Similar to usual Dehn functions, the relative Dehn functions are also independent of finite presentations up to equivalence, i.e.  

\begin{proposition}[\cite{Fuh2000}]
	Let $\mathcal P$ and $\mathcal Q$ be finite relative presentations of the finitely generated metabelian group $G$. Then 
	\[\tilde\delta_\mathcal P \approx \tilde\delta_\mathcal Q.\]
\end{proposition}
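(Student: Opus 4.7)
The plan is to mimic the classical proof that the ordinary Dehn function is independent of finite presentation (up to $\approx$), adapted to the relative setting where one substitutes the free group $F$ by the free metabelian group $M_k$. By symmetry it suffices to establish $\tilde\delta_{\mathcal P} \preccurlyeq \tilde\delta_{\mathcal Q}$; the reverse inequality then follows by interchanging the roles of $\mathcal P$ and $\mathcal Q$.

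Write $\mathcal P = \langle X \mid R\rangle_{\mathcal S_2}$ and $\mathcal Q = \langle Y \mid S\rangle_{\mathcal S_2}$, with associated free metabelian groups $M_X$, $M_Y$ and epimorphisms $\varphi_X\colon M_X\to G$, $\varphi_Y\colon M_Y\to G$. Since both $X$ and $Y$ generate $G$, fix words $w_x(Y)\in M_Y$ representing each $x\in X$ and $v_y(X)\in M_X$ representing each $y\in Y$. By the universal property of free metabelian groups, these choices induce homomorphisms $\phi\colon M_X\to M_Y$ (sending $x\mapsto w_x$) and $\psi\colon M_Y\to M_X$ (sending $y\mapsto v_y$), and both are compatible with $\varphi_X$ and $\varphi_Y$. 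Let $C=\max_{x\in X}|w_x|$ and $D=\max_{y\in Y}|v_y|$. Let $B=\max_{s\in S}\tilde\area_{\mathcal P}(\psi(s))$ and $E=\max_{x\in X}\tilde\area_{\mathcal P}(x\cdot\psi\phi(x)^{-1})$; all four constants are finite since $R,S,X,Y$ are finite and each relevant word represents $1$ in $G$.

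Given $w\in M_X$ of length $n$ with $w=_G 1$, the image $\phi(w)\in M_Y$ has length at most $Cn$ and also represents $1$ in $G$, so there is an expression $\phi(w) =_{M_Y}\prod_{i=1}^{\ell} s_i^{f_i}$ with $\ell\leqslant \tilde\delta_{\mathcal Q}(Cn)$. Applying the homomorphism $\psi$ yields $\psi\phi(w) =_{M_X}\prod_{i=1}^{\ell}\psi(s_i)^{\psi(f_i)}$, and replacing each $\psi(s_i)$ by its $\mathcal P$-decomposition (of length $\leqslant B$) expresses $\psi\phi(w)$ as a product of at most $B\ell$ conjugates of relators from $R$ in $M_X$. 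It remains to control $w\cdot(\psi\phi(w))^{-1}$. Writing $w=x_{i_1}\cdots x_{i_n}$ and telescoping the intermediate words $w_j := \psi\phi(x_{i_1})\cdots\psi\phi(x_{i_j})\cdot x_{i_{j+1}}\cdots x_{i_n}$, each consecutive difference $w_{j-1}w_j^{-1}$ is a conjugate of $e_{i_j} := x_{i_j}\psi\phi(x_{i_j})^{-1}$, whose relative area is at most $E$. Hence $\tilde\area_{\mathcal P}(w\cdot\psi\phi(w)^{-1}) \leqslant En$, and altogether
\[
\tilde\area_{\mathcal P}(w) \;\leqslant\; B\,\tilde\delta_{\mathcal Q}(Cn) + En,
\]
which is exactly the statement $\tilde\delta_{\mathcal P}\preccurlyeq \tilde\delta_{\mathcal Q}$.

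The only place that uses the metabelian hypothesis is the extension of $\phi$ and $\psi$ to homomorphisms of free metabelian groups, and this is free of charge by the universal property defining $M_k$. Everything else is a purely combinatorial manipulation inside $M_X$ and $M_Y$, identical to the classical proof for Dehn functions in the variety of all groups. The main bookkeeping point, and the only step requiring care, is the telescoping argument that bounds the cost of swapping $w$ for $\psi\phi(w)$ by a linear function of $|w|$; once that is in hand the proposition is immediate.
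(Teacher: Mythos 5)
Your proof is correct. The paper does not supply an argument here — it simply cites \cite{Fuh2000} — so there is nothing internal to compare against, but your argument is precisely the classical proof that the Dehn function is a presentation invariant, transported to the variety $\mathcal S_2$ via the universal property of free metabelian groups: the homomorphisms $\phi$, $\psi$ are well-defined because $M_Y$, $M_X$ are metabelian, the bound $\tilde\area_{\mathcal P}(\psi\phi(w))\leqslant B\,\tilde\delta_{\mathcal Q}(Cn)$ follows by pushing an $\mathcal Q$-decomposition of $\phi(w)$ through $\psi$ and substituting $\mathcal P$-decompositions of the $\psi(s)$'s, and the telescoping estimate $\tilde\area_{\mathcal P}\bigl(w\cdot\psi\phi(w)^{-1}\bigr)\leqslant E|w|$ is valid since each $w_{j-1}w_j^{-1}$ is a conjugate of some $x\,\psi\phi(x)^{-1}$. (The constant $D$ you introduce is never used, but that is harmless.)
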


Therefore it is valid to denote the relative Dehn function of a finitely generated metabelian group $G$ by $\tilde \delta_G$. One remark is that every finitely generated metabelian group is finite presentable relative to the variety of metabelian groups. Thus the relative Dehn function can be defined for all finitely generated metabelian groups. Another remark is, unlike Dehn functions, it is unknown if the relative Dehn function is a quasi-isometric invariant. The best we can say is the following:

\begin{proposition}
\label{finiteIndexForRelative}
	Let $H,G$ be finitely presented metabelian groups where $H$ is a finite index subgroup of $G$. Then they have the same relative Dehn function up to equivalence.
\end{proposition}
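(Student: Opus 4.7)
The plan is to prove both $\tilde\delta_H \preccurlyeq \tilde\delta_G$ and $\tilde\delta_G \preccurlyeq \tilde\delta_H$ by setting up a Reidemeister--Schreier correspondence between finite relative presentations of $H$ and $G$, and transporting expressions back and forth in the two free metabelian groups involved. No normality assumption on $H$ is needed, since Schreier transversals exist for any finite-index subgroup; but if desired one can first pass to the normal core $N = \bigcap_{g \in G} gHg^{-1}$, which is a finite-index normal subgroup of $G$ contained in $H$, and then compare $\tilde\delta_G$ and $\tilde\delta_H$ separately to $\tilde\delta_N$ using transitivity of $\approx$.

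Set up compatible finite presentations as follows. Let $\langle X \mid R\rangle_{\mathcal S_2}$ be a finite relative presentation of $G$ with $X = \{x_1, \dots, x_k\}$, and let $T = \{g_1 = 1, g_2, \dots, g_d\}$ be a Schreier transversal for $H$ in $G$. Define Schreier generators $Y = \{y_{i,j} := g_i x_j \overline{g_i x_j}^{-1} : 1 \leq i \leq d,\; 1 \leq j \leq k\}$, which generate $H$; each $y_{i,j}$ is a word of bounded length over $X$, and conversely each $x_j$ is a word of length $2$ over $Y \cup T$. Let $\tau$ denote the Schreier rewriting map, carrying any word over $X$ representing an element of $H$ to a word over $Y$ representing the same element. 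Set $s_{i,r} = \tau(g_i r g_i^{-1})$ for $r \in R$ and $1 \leq i \leq d$; then $\langle Y \mid S\rangle_{\mathcal S_2}$ with $S = \{s_{i,r}\}$ is a finite relative presentation of $H$, because the classical free-group Reidemeister--Schreier derivation descends to any variety quotient.

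For $\tilde\delta_H \preccurlyeq \tilde\delta_G$, take a word $w$ of length $n$ over $Y$ with $w =_H 1$. Substituting each $y_{i,j}$ by its expression over $X$ yields $\tilde w$ over $X$ of length $O(n)$ with $\tilde w =_G 1$, and the definition of $\tilde\delta_G$ gives $\tilde w = \prod_{\alpha=1}^N r_{a_\alpha}^{f_\alpha}$ in the free metabelian group $M_k$ on $X$, with $N \leq \tilde\delta_G(O(n))$. Applying $\tau$ to both sides inside the free metabelian group $M_l$ on $Y$, the left side returns to $w$, and each $r_{a_\alpha}^{f_\alpha}$ rewrites to a product of at most a constant number (depending only on $d$) of conjugates of elements of $S$ in $M_l$; summing gives $\tilde\delta_H(n) \leq C \tilde\delta_G(Cn)$. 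The reverse inequality is symmetric: apply $\tau$ directly to a word $w$ of length $n$ over $X$ with $w =_G 1$ to produce $\tau(w)$ over $Y$ with $\tau(w) =_H 1$, expand via the relative presentation of $H$, and lift each $S$-relator conjugate back to an $R$-relator conjugate in $M_k$ using the transversal.

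The main obstacle is that the Reidemeister--Schreier rewriting is classically formulated at the level of free groups, whereas the relative Dehn function counts relators in the free metabelian quotient. So one must verify that whenever $\tilde w = \prod r_{a_\alpha}^{f_\alpha}$ holds modulo the second derived subgroup, applying $\tau$ produces an analogous equation in $M_l$ with only $O(N)$ conjugate factors. This reduces to checking that $\tau$ is a letter-by-letter procedure with constant local expansion, that it commutes with the projection $F(X) \to M_k$ on conjugates of relators, and that it respects the metabelian identity $[[x,y],[z,w]] = 1$ under the substitution $x_j \mapsto g_i^{-1} y_{i,j} \overline{g_i x_j}$. Once this bookkeeping is confirmed, summing local costs over the $N$ factors yields both inequalities.
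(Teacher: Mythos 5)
Your Reidemeister--Schreier approach is genuinely different from the paper's. The paper goes in the opposite direction: it starts from a finite presentation $\langle X \mid R\rangle$ of $H$ and \emph{enlarges} it to a finite presentation $\langle X\cup Y \mid R_0\cup R_1\cup R_2\rangle$ of $G$, where $Y$ consists of coset representatives and $R_1,R_2$ encode the coset multiplication and conjugation rules. The key point there is that the free metabelian group on $X$ embeds in the free metabelian group on $X\cup Y$, so an $R$-decomposition in $M(X)$ transports with no loss into $M(X\cup Y)$; one then reduces an arbitrary $w =_G 1$ to a word over $X$ at linear cost using $R_1\cup R_2$.

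Your approach, by contrast, tries to \emph{restrict} from $G$ to $H$ via Schreier rewriting. The obstacle you flagged --- that ``$\tau$ respects the metabelian identity'' --- is not bookkeeping; it is exactly where the argument breaks. Let $U\leqslant F(X)$ be the preimage of $H$, and recall that $\tau$ realizes the Schreier isomorphism $U\cong F(Y)$, under which $U''$ corresponds to $F(Y)''$. The metabelian relation $\tilde w = \prod_\alpha r_{a_\alpha}^{f_\alpha}$ in $M_k$ lifts to $\tilde w = \big(\prod_\alpha r_{a_\alpha}^{f_\alpha}\big)\cdot z$ in $F(X)$ with $z\in F(X)''$. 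Applying $\tau$ gives, in $F(Y)$, $w = (\text{product of } N \text{ $S$-conjugates})\cdot \tau(z)$, where $\tau(z)\in \tau(F(X)'')$. For this to become a bona fide $S$-decomposition of $w$ modulo $F(Y)''$, you would need $\tau(F(X)'')\subseteq F(Y)''$, i.e. $F(X)''\subseteq U''$. But that fails whenever $k\geqslant 2$ and $[G:H]>1$: a proper finite-index subgroup of a free metabelian group of rank $\geqslant 2$ is never free metabelian, which is another way of saying $U''\subsetneq F(X)''$ strictly. Thus $\tau(z)$ is a genuinely nontrivial element of $M(Y)$ which does lie in $\llangle S\rrangle_{M(Y)}$ (since it maps to $1$ in $H$), but your argument supplies no bound on its relative $S$-area; the conjugators $f_\alpha$ are unbounded, so $z$ can be arbitrary in $F(X)''$.

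Notice the asymmetry: the reverse direction $\tilde\delta_G\preccurlyeq\tilde\delta_H$ of your scheme is actually sound, because there you apply $\tau^{-1}$ and the containment $U''\subseteq F(X)''$ holds automatically, so an $M(Y)$-equation lifts cleanly to an $M(X)$-equation. It is the direction $\tilde\delta_H\preccurlyeq\tilde\delta_G$ that breaks. To repair it you would need to control the cost, in $S$-conjugates modulo $F(Y)''$, of elements of $\tau(F(X)'')$ --- essentially the problem the paper circumvents by embedding the presentations the other way around. A secondary issue: the premise that ``the classical free-group Reidemeister--Schreier derivation descends to any variety quotient'' needs the relators $R$ to present $G$ \emph{over the free group} $F(X)$, not merely over $M_k$; otherwise the Schreier relators $S$ present a larger group than $H$. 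Since $G$ is finitely presented this can be arranged, but it should be said, because a bare relative presentation $\langle X\mid R\rangle_{\mathcal S_2}$ (e.g.\ $R=\emptyset$ when $G$ is free metabelian) does not suffice.
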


\begin{proof}
	Let $\langle X\mid R\rangle$ be a finite presentation of $H$, where $X=\{x_1,x_2,\dots,x_n\}$. Then we have a finite presentation of $G$ as following:
	\[G=\langle X\cup Y\mid R_0\cup R_1\cup R_2\rangle,\]
	where 
	\begin{align*}
		Y &= \{y_1,y_2,\dots,y_m\};\\
		R_0 &= R; \\
		R_1 &= \{y_iy_j=y_{f(i,j)}w_{i,j}, y_i^{-1}=y_{g(i)}u_i \}, w_{i,j}, u_i\in (X\cup X^{-1})^*\\
		f: &\{1,2,\dots,m\}\times \{1,2,\dots,m\}\to \{1,2,\dots,m\}, g:\{1,2,\dots,m\}\to \{1,2,\dots,m\};\\
		R_2 &= \{x_ly_i=y_jv_{i,j}\}, v_{l,i}\in (X\cup X^{-1})^*.
	\end{align*}
	We claim that there exists a constant $L$ such that for every word $w=_G 1$, there exists a word $w'$ such that $w'=w, w'\in (X\cup X^{-1})^*$ and $|w'|\leqslant L|w|$. Moreover, it costs at most $|w|$ relations from $R_1\cup R_2$ to convert $w$ to $w'$.
	
	If the claim is true, then we have that 
	\[\tilde \area_G (w)\leqslant \tilde \area_H(w')+|w|.\]
	It immediately implies that 
	\[\tilde\delta_G(n)\leqslant \tilde\delta_H(Ln)+n.\]
	Thus 
	\[\tilde\delta_G(n)\preccurlyeq \tilde\delta_H(n).\]
	And the other direction $\tilde\delta_H(n)\preccurlyeq \tilde\delta_G(n)$ is obvious since $w_H=1$ implies $w_G =1$. 
	
	To prove the claim, we let $L=\max \{|w_{i,j}|,|u_i|,|v_{l,i}|\mid 1\leqslant i,j\leqslant m, 1\leqslant l\leqslant n\}.$ Let $w$ be a word such that $w=_G 1$. WLOG, we assume that $w$ has the following form:
	\[w=_{F(X\cup Y)} a_1b_1a_2b_2\dots a_kb_ka_{k+1}, a_i\in (X\cup X^{-1})^*, b_i\in (Y\cup Y^{-1})^*,\]
	where only $a_1,a_{k+1}$ might be empty word. For $b_k$, using relations in $R_1$ we have that 
	\[b_k=y_{h(k)}b_k', h(k)\in \{1,2,\dots,m\},b_k\in (X\cup X^{-1})^*,\]
	and $|b_k'|\leqslant L|b_k|$. Thus,
	\[w=a_1b_1a_2b_2\dots a_ky_{h(k)}b_k'a_{k+1},\]
	while the cost of converting is bounded by $|b_k|$ and all relations are from $R_1$. 
	
	Next we commute $y_{h(k)}$ with $a_k$ using relations from $R_2$.
	\[a_ky_{h(k)}=y_{h(k)}a_k', a_k'\in (X\cup X^{-1})^*,\]
	and $|a_k'|\leqslant L|a_k|$. Substituting it in, we get
	\[w=a_1b_1a_2b_2\dots y_{h(k)}a_k'b_k'a_{k+1},\]
	while the cost of converting is bounded by $|a_k|$ and all relations are from $R_2$. 
	
	Therefore, repeating the above process, we eventually have
	\[w=y_{h(1)}a_1'b_1'a_2'b_2'\dots a_k'b_k'a_{k+1}.\]
	Since $w=1$, thus $y_{h(1)}$ is actually an empty word. Consequently, we have
	\[w=a_1'b_1'a_2'b_2'\dots a_k'b_k'a_{k+1}\in (X\cup X^{-1})^*,\]
	and the length of the left-hand side is controlled by 
	\[|a_1'b_1'a_2'b_2'\dots a_k'b_k'a_{k+1}|\leqslant \sum_{i=1}^k L(|a_i|+|b_i|)+|a_{k+1}|\leqslant L|w|.\]
	The cost of relations is bounded by $\sum_{i=1}^{k} (|a_i|+|b_i|)\leqslant |w|$ while all relations are from $R_1\cup R_2$. The claim is proved.

\end{proof}

Let us consider one classic example: the Baumslag-Solitar group $BS(1,2)$. The relative presentation is the same as the usual presentation $BS(1,2)=\langle a,t\mid a^t=a^2\rangle_{\mathcal S_2}$. But one can prove that the relative Dehn function of $BS(1,2)$ is $n$ instead of the usual Dehn function $2^n$ \cite{Fuh2000}. In general, it is difficult to compute the relative Dehn function of a finitely generated metabelian group. We will list some known examples in \cref{relativedehn4}. 

So what is the connection between the relative Dehn function and Dehn function? To answer this question, we have to go back to the complexity of the membership problem of the submodule that we discussed in \cref{maintheorem4}. 

\subsection{Connections Between Dehn Functions and Relative Dehn Functions}
\label{relativedehn3}

The goal of the section is to prove the following theorem:
\begin{theorem}
\label{main3}
Let $G$ be a finitely presented metabelian group. Then 
	\[\tilde \delta_G(n)\preccurlyeq \delta_G(n)\preccurlyeq \max\{\tilde \delta_G^3(n^3),2^n\}.\]
\end{theorem}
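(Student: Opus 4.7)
The first inequality $\tilde\delta_G(n)\preccurlyeq \delta_G(n)$ is essentially tautological, and I would handle it by observing that any absolute decomposition $w =_{F(X)} \prod_{i=1}^{k} r_i^{f_i}$ projects under the quotient $F(X)\twoheadrightarrow M_k=F(X)/F(X)''$ to a relative decomposition using the same number of conjugates; hence $\widetilde{\area}(w)\le \area(w)$ for every $w=_G 1$ and the bound follows.

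For the second inequality, I would fix a finite presentation $\langle X\mid R\rangle$ of $G$, which simultaneously serves as a finite relative presentation. Given $w$ with $|w|\le n$ and $w=_G 1$, I first use the relative Dehn function to write $w=_{M_k} \prod_{i=1}^{\tilde A} r_i^{f_i}$ with $\tilde A\le \tilde\delta_G(n)$. A relative van Kampen diagram argument allows me to assume the conjugators have length linear in $n+\tilde A$. Lifting each $f_i$ to a word $\tilde f_i\in F(X)$ of the same length, I obtain an identity $w = \bigl(\prod_i r_i^{\tilde f_i}\bigr)\,u$ in $F(X)$ with $u\in F(X)''$, and $|u|$ bounded by a polynomial in $n$ and $\tilde A$ (essentially cubic after expanding the conjugates and accounting for the relative diameter).

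Next I would bound the absolute area of $u$. Since $u\in F(X)''$ is a consequence of the metabelian identity $[[x,y],[z,w]]=1$ in $F(X)$, I would decompose $u$ (via iterated commutator identities, or via the Magnus embedding) as a product of polynomially many conjugates of metabelian laws, each of length polynomial in $|u|$. By \cref{metabelianness}, a metabelian commutator of length $\ell$ has absolute area at most $C\cdot 2^\ell$ in $G$, so the cost of filling $u$ is at most $\mathrm{poly}(|u|)\cdot 2^{C|u|}$. Combining with $\tilde A\le \tilde\delta_G(n)$ and $|u|\le \mathrm{poly}(n,\tilde\delta_G(n))$, and splitting according to whether $\tilde\delta_G$ is subexponential or at least exponential, one obtains
\[ \delta_G(n)\preccurlyeq \max\{\tilde\delta_G^3(n^3),\,2^n\}, \]
after absorbing polynomial factors into the exponentials.

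\textbf{Main obstacle.} The most delicate step is the quantitative decomposition of $u\in F(X)''$ into metabelian commutators, with simultaneous control over the number of conjugates and their individual lengths. Matching the resulting two-parameter estimate against the specific bound $\max\{\tilde\delta_G^3(n^3),2^n\}$ requires careful bookkeeping: when $\tilde\delta_G$ is polynomial the metabelianness cost $2^n$ dominates, and when $\tilde\delta_G$ is at least exponential the amplified relative cost $\tilde\delta_G^3(n^3)$ absorbs the exponential correction. The precise exponent three and argument $n^3$ reflect the cubic overhead accumulated during the lifting step and during the polynomial rewriting of $u$ via metabelian laws.
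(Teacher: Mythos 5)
Your first inequality is exactly the paper's argument, so that part is fine. For the second inequality there is a genuine gap: your approach loses the degree control that is the whole point of the paper's module-theoretic machinery.

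Concretely, when you lift a relative decomposition $w=_{M_k}\prod_{i=1}^{\tilde A} r_i^{f_i}$ with $\tilde A\leqslant\tilde\delta_G(n)$ to $F(X)$, the relative van Kampen lemma bounds the conjugator lengths only by $O(n+\tilde A)$, so the error word $u\in F(X)''$ has length controlled only by a polynomial in $n$ \emph{and} $\tilde A=\tilde\delta_G(n)$. When you then invoke \cref{metabelianness} to kill $u$, the metabelian commutators you produce have length comparable to $|u|$, and the resulting cost is of order $2^{C|u|}=2^{\mathrm{poly}(n,\tilde\delta_G(n))}$. This is not bounded above by $\max\{\tilde\delta_G^3(n^3),2^n\}$: for instance if $\tilde\delta_G(n)\approx n^2$ your bound is roughly $2^{n^4}$, while the claimed bound is $2^n$. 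The ``splitting according to whether $\tilde\delta_G$ is subexponential or at least exponential'' does not rescue this, because in the subexponential regime $2^{\mathrm{poly}(n)}$ still dominates $2^n$, and in the superexponential regime $2^{2^{\Theta(n)}}$ still dominates $\tilde\delta_G^3(n^3)$.

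The paper avoids this by never lifting a raw relative decomposition. Instead it passes to the ordered form $\prod_i a_i^{\mu_i}$ of $w$ (cost $\approx 2^n$, since all words involved have length $O(n)$), interprets $w$ as an element of the free $\mathbb{Z}T$-module, and applies the division algorithm over a Gr\"obner basis (\cref{division}). The crucial output of \cref{division} is not just the bound on $\sum_i|\alpha_i|$ but the constraint $\deg(\alpha_i f_i)\leqslant\deg(w)\leqslant n$: every conjugator that appears when translating $\prod_i f_i^{\alpha_i}$ back to a group word has length $O(n)$, even though the \emph{number} of such conjugates is of order $\delta_A(2n^3)\preccurlyeq\tilde\delta_G(n^3)$. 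So the exponential cost of each elementary commutation stays at $2^{O(n)}$ throughout, and the cubic blowup in the number of elementary steps produces the $\tilde\delta_G^3(n^3)$ term via \cref{thelemma} and \cref{relativeConnection}. Without some analogue of this degree control (e.g.\ a normal form for relative area-minimizing decompositions that keeps all conjugators of length $O(n)$), the plan as written cannot reach the stated bound.
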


Let $k$ be the minimal torsion-free rank of an abelian group $T$ such that there exists an abelian normal subgroup $A$ in $G$ satisfying $G/A\cong T$.

First, if $k>0$ we notice that the problem can be reduced in the same way as \cref{main2} does. Because for a finitely presented metabelian group $G$ there exists a subgroup $G_0$ of finite index such that $G_0$ is an extension of an abelian group by a free abelian group of rank $k$. Most importantly, by \cref{finiteindex} and \cref{finiteIndexForRelative}, their Dehn functions are equivalent as well as their relative Dehn functions. Therefore from now on, we assume that $G$ is an extension of an abelian group $A$ by a free abelian group $T$. The projection of $G$ onto $T$ is denoted by $\pi: G\to T$. 

Let $\mathcal T=\{t_1,\dots,t_k\}\subset G$ such that $\{\pi(t_1),\dots,\pi(t_k)\}$ forms a basis for $T$ and $\mathcal A$ be a finite subset of $G$ such that it contains all commutators $a_{ij}=[t_i,t_j]$ for $1\leqslant i<j \leqslant k$ and generates the $T$-module $A$. Then $\mathcal A\cup \mathcal T$ is a finite generating set for the group $G$.
Recall that $G$ has a finite presentation as follows
\[G=\langle a_{1},a_2,\dots,a_m, t_1,t_2,\dots,t_k\mid \mathcal R_1\cup \mathcal R_2\cup \mathcal R_3 \cup \mathcal R_4\rangle,\]
where
\begin{align*}
	\mathcal R_1&=\{[t_i,t_j]=a_{ij} \mid 1\leqslant i<j\leqslant k\};\\
	\mathcal R_2&=\{[a,b^u]=1 \mid a,b\in \mathcal A,u\in \bar F, \|\theta(u)\|<R\};\\
	\mathcal R_3&=\{\prod_{u\in \bar F}(a^{\lambda(\hat u)})^u=a\mid a\in \mathcal A,\lambda\in \Lambda\cap C(A)\}\cup \{\prod_{u\in \bar F}(a^{\lambda(\hat u)})^{u^{-1}}=a\mid a\in \mathcal A,\lambda\in \Lambda\cap C(A^*)\};\\
\end{align*}
and $\mathcal R_4$ is the finite set generating $\ker \varphi$. All the notations are the same as in \cref{maintheorem1}. 

Since we are dealing with relative Dehn function, we can reduce amount of redundant relations in $\mathcal R_2$. We set $\mathcal R'_2=\{[a,b]=1,[a,b^t]=1\mid a,b\in \mathcal A, t\in \mathcal T\}$. Then we have

\begin{lemma}
\label{relativeCommutative}
	$\mathcal R_2'$ generates all commutative relations $[a,b^u]=1, a,b\in \mathcal A, u\in F(\mathcal T)$ in the presentation relative to the variety of metabelian groups. Moreover, the relative area of $[a,b^u]$ is bounded by $4|u|-3$.
\end{lemma}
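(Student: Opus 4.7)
Plan: Proceed by induction on $n = |u|$. For the base case $n=1$, the relation $[a, b^t] = 1$ belongs to $\mathcal R_2'$ when $t \in \mathcal T$, giving relative area $1$; the case $u = t^{-1}$ is handled by rewriting $[a, b^{t^{-1}}] = [a^t, b]^{t^{-1}}$ and applying a short reduction using $[a,b] = 1$, $[a,b^t] = 1$, and the metabelian identity.

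For the inductive step with $n \geq 2$, I would establish the following identity in the free metabelian group $M_k$: if $u = u't$ with $|u'| = n-1$ and $t \in \mathcal T \cup \mathcal T^{-1}$, then
\[
[a, b^{u't}] \;=\; [a, [b, u']]^t \cdot [a, [b, t]] \cdot [a, b].
\]
The derivation uses the commutator identity $[x, yz] = [x, z] \cdot [x, y]^z$ together with two consequences of metabelianness: every element of $M_k'$ is centralized by $M_k'$ (so conjugation of one commutator by another is trivial); and the map $[a, -]: M_k' \to M_k'$ is a $\mathbb Z M_k^{\mathrm{ab}}$-module homomorphism, in particular $[a, m^t] = [a, m]^t$ for $m \in M_k'$. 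First, expanding via $b^u = b \cdot [b, u]$ yields the auxiliary identity $[a, b^u] = [a, [b, u]] \cdot [a, b]$ in any metabelian group. Next, starting from $(b^{u'})^t = b^{u'} \cdot [b^{u'}, t]$ and using $[xy, z] = [x, z]^y \cdot [y, z]$ with $b^{u'} = b \cdot [b, u']$, one obtains $[b^{u'}, t] = [b, t] \cdot [[b, u'], t]$. Applying the module identity $[a, [m, t]] = [a, m]^t \cdot [a, m]^{-1}$ to $m = [b, u'] \in M_k'$ and assembling the three pieces, the cancellations afforded by the abelianness of $M_k'$ collapse the expression to the displayed three-factor form.

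Having the identity, I bound the relative area by summing contributions from each factor. Using the metabelian rewriting $[a, [b, u']] = [a, b^{u'}] \cdot [a, b]^{-1}$ together with the inductive bound $\tilde\area([a, b^{u'}]) \leq 4(n-1) - 3$, the factor $[a, [b, u']]^t$ has relative area at most $(4(n-1) - 3) + 1 = 4n - 6$, since conjugation by $t$ does not change the count of $\mathcal R_2'$-relators. The factor $[a, [b, t]] = [a, b^t] \cdot [a, b]^{-1}$ contributes $2$, and $[a, b] \in \mathcal R_2'$ contributes $1$. Summing yields $(4n - 6) + 2 + 1 = 4n - 3$, closing the induction. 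The main technical obstacle is the commutator bookkeeping: verifying the module-homomorphism identity $[a, m^t] = [a, m]^t$ for $m \in M_k'$ in metabelian groups, and tracking the cancellations in $M_k'$ that reduce $[a, b^{u't}]$ to the clean three-factor form, after which the arithmetic is immediate.
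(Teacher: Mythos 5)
Your proof is correct and achieves the stated bound $4|u|-3$, but you arrive there by a genuinely different route than the paper. The paper starts from the \emph{trivial} metabelian relation $1=[a^{-1}a^t,b^{-t}b^u]=[[a,t],[b,v]^t]$ (a commutator of commutators, hence free of cost in the variety $\mathcal S_2$), then unfolds the commutator into a word of length eight and rearranges it letter-by-letter into $ab^{-u}a^{-1}b^u$, spending $[a,a^t]=1$ twice, $[a,b]=1$ once, $[a,b^t]=1$ once, and the inductive relation $[a,b^v]=1$ once along the way. You instead establish the factorization identity $[a,b^{u't}]=[a,[b,u']]^t\,[a,[b,t]]\,[a,b]$ directly in the free metabelian group, deduced from the auxiliary identity $[a,b^w]=[a,[b,w]][a,b]$ and the fact that $[a,-]$ is a $\mathbb Z[M^{\mathrm{ab}}]$-module endomorphism of $M'$ (which you correctly verify is a consequence of metabelianness), and then charge each factor separately. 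Your approach is cleaner in two respects: it makes the module structure underlying the computation explicit rather than hiding it in a word-rearrangement, and it handles the case $u=u't^{-1}$ just as uniformly as $u=u't$ (the paper's inductive step only writes $u=vt$ with $t\in\mathcal T$, leaving the negative-exponent case implicit). The one place your write-up could be tightened is the base case $u=t^{-1}$: you do not need a ``short reduction'' spending several relators; $[a,b^{t^{-1}}]=([b,a^t]^{-1})^{t^{-1}}$ is already a conjugate of the inverse of the $\mathcal R_2'$-relator $[b,a^t]$, so its relative area is exactly $1=4\cdot 1-3$, matching the required bound.
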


\begin{proof}
	Suppose the result is proved for $|u|\leqslant n$, i.e., $[a,b^u]=1$ can be written as a product of conjugates of words in $\mathcal R'_2$ and metabelian relations. For metabelian relations, we mean those relations make commutators commute to each other. Note that the relative area of any metabelian relations is 0.
	
	Now for that case $|u|=n+1$, let $u=vt, |v|=n, t\in \{t_1,t_2,\dots,t_k\}$. By metabelian relations, we have that 
	\[1=[a^{-1}a^t,b^{-t}b^u].\]
	Since $a^{-1}a^t=[a,t]$ and $b^{-t}b^u=[b,v]^t$.
	Then by inductive assumption, we are able to use relations like $[a,b^w]=1$ when $a,b\in \mathcal A, |w|\leqslant n$. In particular, $[a,b^v]=1$.
	
	And notice that 
	\begin{align*}
		1=[a^{-1}a^t,b^{-t}b^u] &= \underbrace{a^{-t}a}_{\text{commute}}b^{-u}b^{t}\underbrace{a^{-1}a^t}_{\text{commute}}b^{-t}b^u \\
		&= a\underbrace{a^{-t}b^{-u}}_{\text{commute}}\underbrace{b^{t}a^t}_{\text{commute}}a^{-1}b^{-t}b^u \\
		&= ab^{-u}a^{-t}a^t\underbrace{b^{t}a^{-1}}_{\text{commute}}b^{-t}b^u \\
		&= ab^{-u}a^{-1}b^u.
	\end{align*}
	This shows that $[a,b^u]$ can be generated by $\mathcal R_2'$ and matebelian relations. Let us count the cost. In the computation above we use $[a,b^{v}]=1$ once (notice that $[a^t,b^u]=[a,b^v]^t$), $[a,a^t]=1$ twice, $[a,b]=1$ once, and $[a,b^t]$ once. Therefore, 
	\[\tilde\area([a,b^u])\leqslant \tilde\area([a,b^v])+4\leqslant 4(|v|+1)-3=4(n+1)-3.\]
	This completes the proof.
\end{proof}

The lemma allows us to replace $\mathcal R_2$ by $\mathcal R_2'$ in the relative presentation. And we immediately get the relative version of \cref{conjugate}.

\begin{lemma}
\label{relativeConjugate}
	Let $u$ be a reduced word in $F(\mathcal T)$ and $\bar u$ be the unique word in $T$ representing $u$ in the form of $t_1^{m_1}t_2^{m_2}\dots t_k^{m_k}$. Then we have
	\[\tilde\area(a^{-u}a^{\bar u})\leqslant 4|u|^2+2|u|.\]
\end{lemma}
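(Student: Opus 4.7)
The proof plan is to induct on $n = |u|$, paralleling the structure of the proof of \cref{organizer} but replacing the exponential estimate from \cref{conjugate} with the linear estimate from \cref{relativeCommutative}. The base cases $|u| \leq 1$ are immediate since $u = \bar u$ already.

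For the inductive step at $|u| = n$, write $u = u' s$ with $|u'| = n-1$ and $s \in \mathcal T \cup \mathcal T^{-1}$, and use the factorization
\[ a^{-u} a^{\bar u} = (a^{-u'} a^{\bar u'})^{s} \cdot a^{-\bar u' s} \cdot a^{\bar u}. \]
The first factor has relative area at most $4(n-1)^2 + 2(n-1)$ by induction, since conjugation by $s$ is free in the metabelian variety. For the remaining part, if $s = t_l^{\pm 1}$ and $\bar u' = t_1^{m_1}\cdots t_k^{m_k}$, then $\bar u' s$ differs from $\bar u$ in the free group only by moving $s$ past the suffix $\tau = t_{l+1}^{m_{l+1}}\cdots t_k^{m_k}$. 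Applying the identity $[xy,z] = [x,z]^y [y,z]$ iteratively gives a free-group factorization $\bar u' s = \bar u \cdot \prod_{i=1}^{p} c_i^{\alpha_i}$ with $p \leq |\tau| \leq n-1$, each $c_i$ a basic commutator involving $t_l$, and each conjugator $\alpha_i$ a tail of $\bar u$. This identity is purely free (cost $0$), and applying $\mathcal R_1$ converts each $c_i$ into an element $d_i \in \mathcal A^{\pm 1}$ at cost $p$ relations.

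Hence $a^{-\bar u' s} \cdot a^{\bar u}$ reduces to $[a^{\bar u}, \prod_i d_i^{\alpha_i}]^{-1}$, which expands (again for free) via $[x,yz] = [x,z][x,y]^z$ into a product of $p$ individual commutators $[a^{\bar u}, d_i^{\alpha_i}]$. The conjugation identity $[a^g, b^h] = [a, b^{hg^{-1}}]^g$ rewrites each such factor as a conjugate of $[a, d_i^{\alpha_i \bar u^{-1}}]$, whose relative area is bounded by \cref{relativeCommutative} as $4\ell_i - 3$, where $\ell_i$ is the reduced length of $\alpha_i \bar u^{-1}$ in $F(\mathcal T)$.

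The main obstacle is the careful estimate of the $\ell_i$'s: since each $\alpha_i$ is a tail of the suffix $\tau$ of $\bar u'$, and $\bar u^{-1}$ begins with (essentially) $\tau^{-1}$, substantial free cancellation in $\alpha_i \bar u^{-1}$ shortens the reduced word to length at most $n$. Summing the resulting contributions across all $p \leq n-1$ commutations, and combining with the inductive hypothesis and the small $\mathcal R_1$-conversion cost, must be squeezed into the incremental budget $4n^2 + 2n - (4(n-1)^2 + 2(n-1)) = 8n - 2$ to close the induction. Achieving the precise constants in the claimed bound requires a careful amortized accounting of the tail-cancellation savings across the family $\{\alpha_i\}$, as well as the observation that distinct $\alpha_i$ arising from the expansion of $[\tau, t_l]$ share significant common prefix structure with $\bar u$.
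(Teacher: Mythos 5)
Your proof follows the route the paper intends (the paper's one-line proof cites \cref{conjugate}, but clearly means \cref{organizer}), and your diagnosis of the obstacle is correct. The inductive step for $|u|=n$ introduces up to $m\leqslant n-1$ commutators $[a^{\bar u},d_j^{\alpha_j}]$ with $\alpha_j$ a tail of $\bar u$; rewriting each as $[a,d_j^{\overline{\alpha_j\bar u^{-1}}}]^{\bar u}$, the $j$-th factor has relative area $4\ell_j-3$ with $\ell_j=|\bar u|-|\alpha_j|$, and since the $\alpha_j$ realize every tail length $0,1,\dots,m-1$ (take, say, $\bar u'=t_2^{n-1}$, $s=t_1$), the sum $\sum_j(4\ell_j-3)$ is an arithmetic series of order $\Theta(n^2)$. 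This genuinely exceeds the incremental budget $8n-2$, and the amortization you hope for does not exist: the $\ell_j$ are distinct values in $\{2,\dots,n\}$, so a single step really does cost $\Theta(n^2)$, and the recursion $T(n)\leqslant T(n-1)+\Theta(n^2)$ gives $T(n)=O(n^3)$. The example $\bar u'=t_2^{n-1}$ alone does not reveal the cubic growth because there $T(n-1)=0$; but for $u=(t_3t_2)^p$ every other peeling step again contributes on the order of $p^2$, so $T(2p)\approx 2p^3$, which eventually exceeds $4(2p)^2+2(2p)$.

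So the gap you ran into is real, but it sits in the lemma's claimed bound, not in your argument: $4|u|^2+2|u|$ does not appear attainable by this scheme, and the natural bound one extracts from it is cubic in $|u|$. This is harmless for the asymptotic downstream results --- \cref{relativeConnection}, \cref{main3}, \cref{relativeDehnfuntion} are stated up to $\approx$ and need only some polynomial bound --- but the explicit exponent appearing in the proof of \cref{relativeConnection} would need to be adjusted accordingly. Your instinct not to force the induction to close at the quadratic constant was the right one; the resolution is to state and prove a cubic estimate rather than to look for a cleverer amortization.
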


\begin{proof}
	The only difference of this proof to the proof of \cref{conjugate} is that now it only costs $4|u|-3$ to commute conjugates every time. 
\end{proof}

Thus in the relative sense, we save a lot of cost due to the fact we assume metabelianness is free of charge. 

Next, we focus on the $T$-module $A$. It is not hard to see that $A$ is the quotient of the free $T$-module generated by $\mathcal A$ by the submodule generated by $\mathcal R_3\cup \mathcal R_4$. We then replace $\mathcal R_3\cup \mathcal R_4$ by the Gr\"{o}bner basis $\mathcal R'_3=\{f_1,f_2,\dots,f_l\}$ for the same submodule. Therefore we finally have the relative presentation of $G$ we want:
\[G=\langle a_1,a_2,\dots,a_m, t_1,t_2,\dots,t_k\mid \mathcal R_1\cup \mathcal R_2'\cup \mathcal R_3'\rangle_{\mathcal S_2}.\]

We let $M$ be the free $T$-module generated by $\mathcal A$ and $S$ be the submodule generated by $\mathcal R_3'$ over $T$. So that $A\cong M/S$. For an element $f=\mu_1 a_1+\mu_2 a_2+\dots +\mu_m a_m$ in $M$, we define its length, denoted by $\|f\|$, to be the length of the word $a_1^{\mu_1}a_2^{\mu_2}\dots a_m^{\mu_m}$ in $G$. Then for every element $f$ in $S$, there exists $\alpha_1,\alpha_2,\dots,\alpha_l\in R$ such that 
\[f=\alpha_{1}f_1+\alpha_2 f_2+\dots+\alpha_l f_l.\]
We denote by $\area_A(f)$ the minimal possible $\sum_{i=1}^l |\alpha_i|$. Then \emph{the Dehn function} of the submodule $A$ is defined to be 
\[\delta_A(n)=\max\{\area_A(f)\mid \|f\|\leqslant n\}.\]

Then we have a connection between the relative Dehn function of $G$ and the Dehn function of the submodule $S$.

\begin{lemma}
\label{relativeConnection}
	Let $G$ be a finitely presented metabelian group and $A$ is defined as above, then 
	\[\delta_A(n)\preccurlyeq \tilde \delta_G(n) \preccurlyeq \max\{\delta_A^3(n^3), n^6\}. \]
\end{lemma}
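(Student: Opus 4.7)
The plan is to prove the two inequalities separately by translating between group-level decompositions of a null-homotopic word $w$ as a product of conjugates of relators in the free metabelian group $M_k$ on $\mathcal A\cup\mathcal T$, and module-level decompositions of the corresponding element $f$ as a $\mathbb ZT$-combination of the Gr\"obner basis $\mathcal R_3'=\{f_1,\dots,f_l\}$.

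For $\delta_A(n)\preccurlyeq \tilde\delta_G(n)$, fix $f\in S$ with $\|f\|\le n$ and take the word $w=a_1^{\mu_1}\cdots a_m^{\mu_m}$ representing $f$; it has length at most $n$ in $G$ and equals $1$ there. The relative presentation yields an identity $w=\prod_{j=1}^{l'} r_j^{g_j}$ in $M_k$ with $l'\le \tilde\delta_G(n)$ and $r_j\in \mathcal R_1\cup \mathcal R_2'\cup \mathcal R_3'$. I then push this identity through the natural map $\llangle \mathcal A\rrangle_{M_k}\to \llangle \mathcal A\rrangle_{H_\infty}=M$ induced by the quotient $M_k\to H_\infty$. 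Relators from $\mathcal R_1\cup \mathcal R_2'$ are trivial in $H_\infty$ and contribute $0$ in $M$; each occurrence of a relator $r_j=f_{i_j}\in\mathcal R_3'$ contributes $\sigma(g_j)\cdot f_{i_j}\in M$, where $\sigma:M_k\to T$ is the canonical projection, using that the action of $H_\infty$ on the abelian module $M$ factors through $\sigma$. Summing, $f=\sum_i\alpha_i f_i$ in $M$ with $\sum|\alpha_i|\le l'\le \tilde\delta_G(n)$, so $\area_A(f)\le \tilde\delta_G(n)$.

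For the reverse inequality, given $w=_G 1$ with $|w|\le n$, I would follow the four-step normalization of the proof of \cref{main2} using the relative substitutes: \cref{relativeConjugate} replaces \cref{organizer} (polynomial instead of exponential) and \cref{relativeCommutative} replaces \cref{conjugate} (linear instead of exponential). Tracking the costs step by step yields a relative conversion of $w$ to its ordered form $w_4=a_1^{\mu_1}\cdots a_m^{\mu_m}$ of polynomial cost $O(n^5)$, where $f:=(\mu_1,\dots,\mu_m)\in S$ satisfies $\|f\|=O(n^3)$. Applying $\delta_A$ gives a module decomposition $f=\sum_i \alpha_i f_i$ with $\sum|\alpha_i|\le \delta_A(Cn^3)$, and by \cref{division} one may take $\deg \alpha_i\le \deg f\le n$. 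Building the product $w_5=\prod_i f_i^{\alpha_i}$ in $M_k$ uses $\sum|\alpha_i|\le \delta_A(Cn^3)$ conjugates of $\mathcal R_3'$-relators and produces a word of length $O(n\,\delta_A(Cn^3))$ consisting of $O(\delta_A(Cn^3))$ atomic conjugates of single $a_i$'s. Since $w_5$ and $w_4$ agree in $M$, they agree in $H_\infty$, so they can be matched using only relators from $\mathcal R_1\cup \mathcal R_2'$; the bubble-sort argument from the proof of \cref{main2} transfers and costs $O(\delta_A^2(Cn^3))$ swaps of cost $O(n)$ each by \cref{relativeCommutative}. Summing the three contributions $O(n^5)+\delta_A(Cn^3)+O(n\,\delta_A^2(Cn^3))$ and using that $\delta_A(Cn^3)\ge n^3$ whenever $\delta_A$ is not essentially constant (so $n\,\delta_A^2\le \delta_A^3$) yields the bound $\max\{\delta_A^3(n^3),n^6\}$ up to $\preccurlyeq$; the $n^6$ term absorbs the polynomial overhead when $\delta_A$ is trivially bounded.

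The most delicate step is controlling the matching of $w_5$ to $w_4$. It succeeds because \cref{division} guarantees that all conjugating $\mathbb ZT$-elements in the decomposition have degree at most $\deg f=O(n)$, which keeps each individual commutation cheap; without this degree bound, the per-swap cost would blow up and the target upper bound would be lost.
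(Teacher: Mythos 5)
Your overall strategy matches the paper's on both inequalities. For $\delta_A(n)\preccurlyeq\tilde\delta_G(n)$ you argue directly by pushing a relative van Kampen decomposition of $w=\prod r_j^{g_j}$ through $M_{m+k}\twoheadrightarrow H_\infty$ and reading off a module decomposition $f=\sum_i\alpha_i f_i$ with $\sum|\alpha_i|$ at most the number of $\mathcal R_3'$-cells; the paper instead argues by contrapositive, but the underlying identification is the same. For the upper bound you reproduce the paper's plan: normalize $w$ to ordered form via \cref{relativeCommutative} and \cref{relativeConjugate}, apply $\delta_A$ to the resulting module element, rebuild $\prod_i f_i^{\alpha_i}$, and match it to $w_4$ by a relative version of the bookkeeping in \cref{main2}.

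The step that does not go through as you wrote it is the sentence asserting ``by \cref{division} one may take $\deg\alpha_i\le\deg f\le n$'' while simultaneously having $\sum_i|\alpha_i|\le\delta_A(Cn^3)$. These two bounds come from two different decompositions which you have no right to combine: the Gr\"obner reduction of \cref{division} does give $\deg(\alpha_i f_i)\le\deg f$, but its size bound is $\sum|\alpha_i|\le p K^{n^{2k}}$, not $\delta_A(Cn^3)$; conversely, a decomposition that realises $\area_A(f)$ (hence $\sum|\alpha_i|\le\delta_A(Cn^3)$) carries no a priori bound on $\deg\alpha_i$, because $\area_A$ minimises only the $\ell_1$-norm and not the degree. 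Since the per-conjugation cost in \cref{relativeConjugate} and per-swap cost in \cref{relativeCommutative} are polynomial in the degree/length of the conjugator, an unbounded $\deg\alpha_i$ would blow up the matching cost and the $\max\{\delta_A^3(n^3),n^6\}$ bound would be lost exactly as you warn in your last paragraph. To be fair, the paper's own proof glosses over this point as well (it says ``repeat the same process of the proof of \cref{main2}'' without re-deriving the degree control that \cref{main2} obtains from \cref{division}), so you have correctly located the delicate spot, but the fix you offer is not a correct reading of \cref{division}; one would need a separate argument (for instance, that a size-minimising decomposition can always be modified to also have degree bounded by $\deg f$ without increasing $\sum|\alpha_i|$, or a redefinition of $\area_A$ that constrains the degree) to close this gap rigorously. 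The other discrepancies you introduce --- $O(n^5)$ instead of the paper's $O(n^6)$ for the normalization, and $O(n\,\delta_A^2)$ rather than the somewhat larger contribution coming from the $\sum_i|\alpha_i|^3$ term in the analogue of \cref{thelemma}(a) --- are minor and absorbed by $\preccurlyeq$.
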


\begin{proof}
	Now let $w$ be a word of length $n$ such that $w=_G 1$. We then estimate the cost of converting it to the ordered form. The process is exactly the same as in the proof of \cref{main2}. We replace the cost by the cost in relative presentation by \cref{relativeCommutative} and \cref{relativeConjugate}. It is not hard to compute that it costs at most $n^2+(4n-3)(n^2+n)+(4n-3)^2(n^2+n)^2$ to convert $w$ to its ordered form $w':=\prod_{i=1}^m a_i^{\mu_i}$ where $\sum_{i=1}^n |\mu_i|\leqslant n^2,\deg \mu_i\leqslant n$ and $|w'|\leqslant 2n^3$. Since $w'$ lies in the normal subgroup generated by $\mathcal R_3'$, then there exists $\alpha_1,\alpha_2,\dots,\alpha_l$ such that 
	\[w'=\prod_{i=1}^l f_i^{\alpha_i}, \sum_{i=1}^l |\alpha_i|\leqslant \delta_A(2n^3).\]
	The relative area of the left hand side is less than $\sum_{i=1}^l |\alpha_i|$. Then we just repeat the same process of the proof of \cref{main2}, and compute the cost of adding $f_i^{\alpha_i}$ up to $w'$. The cost is bounded by $(\sum_{i=1}^l |\alpha_i|)^3$ up to equivalence. Thus the relative area of $w'$ is asymptotically bounded by $\delta_A^3(n^3)$ up to equivalence. And hence the relative area of $w$ is bounded by $\max\{\delta_A^3(n^3),n^6\}$. Thus the right inequality is proved. 
	
	For the left inequality in the statement, let $\prod_{i=1}^m a_i^{\mu_i}$ be a word of ordered form such that it realizes $\delta_A(n)$. The length of the word, by definition, is bounded by $n$. We claim that the relative area of $\prod_{i=1}^m a_i^{\mu_i}$ is greater than $\delta_A(n)$. If not, by the definition of the relative area, we have that 
	\[\prod_{i=1}^m a_i^{\mu_i}=\prod_{j=i}^{s} r_i^{h_i}, r_i\in \mathcal R_1'^{\pm 1}\cup \mathcal R_2'^{\pm} \cup \mathcal R_3'^{\pm}, h_i\in M_{m+k},\]
	where $s=\area(\prod_{i=1}^m a_i^{\mu_i})<\delta_S(n)$. If we only keep all relations from $\mathcal R_3'$ and combine the same relations together, we will get $\prod_{i=1}^l f_i^{\alpha_i}$ and $\sum_{i=1}^l |\alpha_i|\leqslant s<\delta_A(n)$. Since canceling relations like $[t_i,t_j]=a_{ij}, [a,b^t]=1$ and commuting $f_i^{h_j}$'s do not change the value of left hand side as an element in free $T$-module generated by basis $\{a_1,a_2,\dots,a_m\}$. Therefore we eventually get 
	
	\[\sum_{i=1}^m \mu_i a_i=\sum_{j=1}^l \alpha_j f_j, \sum_{i=1}^l |\alpha_i|<\delta_A(n).\]
	
	It leads to a contradiction. 
\end{proof}

Since the proof of \cref{relativeConnection} works even if the group is not finitely presented, by replacing $G$ by a finitely generated metabelian group, we have

\begin{proof}[Proof of \cref{relativeDehnfuntion}]
Let $G$ be a finitely generated metabelian group. If $k=0$, $G$ has a finitely generated abelian subgroup of finite index. Then the relative Dehn function is asymptotically bounded by $n^2$.

If $k>0$, similarly, we can reduce the case to that $G$ is an extension of a module $A$ by a free abelian group $T$ such that the torsion-free rank of $T$ is $k$. Then a word $w=_G 1$ with $|w|\leqslant n$ can be converted to its ordered form $w':=\prod_{i=1}^m a_i^{\mu_i}$ where $|w|\leqslant 2n^3, \deg(w)\leqslant n, \sum_{i=1}^m |\mu_i|\leqslant n^2$. Then by \cref{division}, there exists a word $w''$ such that $w'=_G w'', \area(w'')\leqslant 2^{n^{2k}}$. The theorem follows immediately.
\end{proof}

Finally we have all the ingredients to prove \cref{main3}.

\begin{proof}[Proof of \cref{main3}]
	The left inequality is obvious since the finite presentation of $G$ is also the relative finite presentation of $G$. 
	
	If $k=0$, $G$ has a finitely generated abelian subgroup of finite index. The result follows immediately.
	
	If $k>0$, let $w$ be a word of length $n$ and $w=_G 1$. Then there exists $\alpha_1,\alpha_2,\dots,\alpha_l$ 
	\begin{equation}
	\label{relative}
		w=\prod_{i=1}^l f_i^{\alpha_i},\sum_{i=1}^l |\alpha_i|\leqslant \delta_A(2n^3),\deg \alpha_i\leqslant n.
	\end{equation}
	According to the proof of \cref{main2}, adding the left hand side of (\ref{relative}) costs at most $\max\{\delta_A^3(2n^3),2^n\}$ up to equivalence. All other steps of converting cost at most exponential with respect to $n$. Then by the left inequality in \cref{relativeConnection}, $\area(w)\leqslant \max\{\tilde\delta_G^3(n^3),2^n\}$. Therefore the theorem is proved.  
\end{proof}

\subsection{Computing Relative Dehn Functions}
\label{relativedehn4}

First, let us list some known results for relative Dehn functions.

\begin{theorem}[\cite{Fuh2000}]
\label{knownResult}
	\begin{enumerate}[(1)]
		\item The wreath product of two finitely generated abelian groups has polynomial relative Dehn function.
		\item The Baumslag-Solitar group $BS(1,2)$ has linear Dehn function.
		\item Let $G=B(n,m)=\langle a,t \mid (a^n)^t = a^m \rangle_{\mathcal S_2}$ where $m>2, m=n+1$. Then $\tilde\delta_G(n)\preccurlyeq n^3$.
	\end{enumerate}
\end{theorem}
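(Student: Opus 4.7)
The strategy is to apply the module-theoretic framework from the previous subsection uniformly: for each group $G$, I identify an abelian normal subgroup $N \triangleleft G$ with $T := G/N$ abelian, realize $N$ as a quotient $M/S$ of a finitely generated free $\mathbb ZT$-module, and analyze the membership cost of $S$ in $M$. Part (1) will follow from \cref{relativeConnection} directly, while parts (2) and (3) demand sharper bounds than the $n^6$ produced generically by that lemma.

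For part (1), let $G = A \wr T$ with $A, T$ finitely generated abelian, and take $N = A^{(T)} = \bigoplus_{t \in T} A$ as the abelian normal subgroup. Writing $A = \mathbb Z^p \oplus \bigoplus_{i=1}^q \mathbb Z/n_i\mathbb Z$, the module $N$ decomposes as $(\mathbb ZT)^p \oplus \bigoplus_{i=1}^q \mathbb ZT/n_i\mathbb ZT$, so with $M$ free of rank $p+q$ on basis $\{e_1,\dots,e_{p+q}\}$ the submodule $S$ is generated by the constants $\{n_i e_{p+i}\}_{i=1}^q$. For any $f = \sum_j \mu_j e_j \in S$ one simply writes $f = \sum_i (\mu_{p+i}/n_i)(n_i e_{p+i})$, and the total coefficient sum is at most $|f|$. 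Hence $\delta_A(n) \preccurlyeq n$, and \cref{relativeConnection} yields $\tilde\delta_G(n) \preccurlyeq n^6$, which is polynomial.

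For part (2), I would attack $BS(1,2)$ directly, since \cref{relativeConnection} at best yields a cubic bound. Here $M = \mathbb ZT$ and $S = (2-t)\mathbb ZT$; the plan is to show that any trivial word $w$ of length $n$ reduces to its ordered form $a^\alpha$ with $\alpha \in (2-t)\mathbb ZT$ in a linear number of relative steps, by pushing each $t^{\pm 1}$ across the adjacent $a^{\pm 1}$ using $a^t = a^2$ and exploiting that conjugates $a^u$ and $a^v$ commute for free in the metabelian variety (so the push does not trigger any amplification of subsequent commutators). One then divides $\alpha$ by $2 - t$ by polynomial long division: each division step eliminates one term of the current remainder at the cost of a single conjugate of the relator $a^{tu}a^{-2u}$, and the total count is linear in $|\alpha|$, which itself is linear in $n$ after the reduction stage.

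For part (3), $B(n,m) = \langle a,t \mid (a^n)^t = a^m\rangle_{\mathcal S_2}$ with $m = n+1$ gives $M = \mathbb ZT$ and $S = (nt - m)\mathbb ZT$. To bound $\delta_A$, I would use the formal identity $(nt-m)^{-1} = -\frac{1}{m}\sum_{k\geq 0}(n/m)^k t^k$: since $n/m = n/(n+1) < 1$, the quotient $\beta = f/(nt-m)$ for $f \in S$ has coefficients bounded in absolute value by $\max_j |c_j(f)|/(m-n) = \max_j |c_j(f)|$, giving $|\beta| \preccurlyeq \deg(f)\cdot |f|$. To upgrade this to $\tilde\delta_G \preccurlyeq n^3$ rather than the $n^6$ output by \cref{relativeConnection}, one exploits that $\mathcal R_3'$ consists of the single relator $nt - m$, so the ``summing $f_i^{\alpha_i}$ up'' step in the proof of that lemma is vacuous and the dominant cost reduces to a single conversion of $f^\alpha$ to ordered form, doable in $O(|\alpha|\deg(\alpha))$ relative relations by the metabelian commutativity. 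The main obstacle across (2) and (3) is precisely this sharpening: replacing the generic conversion cost of \cref{relativeConnection} by one tailored to the single-generator principal ideals in $\mathbb ZT$ that arise here.
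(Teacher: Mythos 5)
The theorem you are proving is \emph{cited} in the paper from Fuh's thesis \cite{Fuh2000}; the paper does not reprove it, so there is no ``paper's own proof'' to compare against. Reading your proposal as a fresh derivation via the paper's module-theoretic machinery, part (1) is essentially sound, but parts (2) and (3) have genuine gaps.

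For part (1), the module identification is correct and $\delta_A(n)\preccurlyeq n$ follows as you say; a small arithmetic slip ($\delta_A^3(n^3)=n^9$, not $n^6$) is harmless since the conclusion is only ``polynomial.'' You should note, however, that \cref{relativeConnection} is stated for finitely \emph{presented} metabelian groups, whereas $A\wr T$ with $A$ infinite is not finitely presented (e.g.\ $\mathbb Z\wr\mathbb Z$); the paper itself remarks, in the proof of \cref{relativeDehnfuntion}, that the argument of \cref{relativeConnection} extends to finitely generated groups, and that extension is what you actually need here.

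For part (2) the central claim --- that conjugates $a^u,a^v$ ``commute for free in the metabelian variety (so the push does not trigger any amplification)'' --- is wrong. In $M_2$ the element $a$ is a generator, not a commutator, so $[a^u,a^v]$ is \emph{not} a consequence of the metabelian law $[[x,y],[z,w]]=1$. In fact \cref{relativeCommutative} is precisely the statement that $\tilde\area([a,b^u])$ is $4|u|-3$, i.e.\ linear rather than zero, and \cref{relativeConjugate} shows that a single ordering $a^u\mapsto a^{\bar u}$ already costs $\Theta(|u|^2)$. Your proposed ``pushing'' procedure therefore generates a rearrangement cost that is at least quadratic in $n$ before the division even begins, so your argument cannot produce a \emph{linear} relative Dehn function without an additional idea that sidesteps the generic conversion-to-ordered-form step. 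Your division estimate also needs more than is written: you assert the cost is ``linear in $|\alpha|$,'' but each elimination of a monomial $c_jt^j$ costs $|c_j|$ conjugates of the relator and produces a carry $2c_j$ at degree $j-1$, so the naive upper bound is $|\alpha/(t-2)|$, and you would need the $\ell^1$ lower bound $|(t-2)\beta|\geqslant|\beta|$ (an easy triangle-inequality estimate you never state) to conclude $|\beta|\leqslant|\alpha|\leqslant n$. Even granting that, the bottleneck is still the ordering cost.

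For part (3) the low-degree-first division estimate $|\beta_j|\leqslant\max_i|c_i(f)|/(m-n)$ is correct because $n/m<1$, and this is the right observation; but the final assertion that the conversion of $f^\alpha$ to ordered form is ``doable in $O(|\alpha|\deg(\alpha))$ relative relations by the metabelian commutativity'' repeats the same error as in part (2): metabelianness does not make these commutations free, and the conversion cost computed in \cref{relativeConnection} has an $s'^2\cdot(\text{per-swap cost})$ shape that you would need to analyze explicitly (and favorably) for this one-relator module to actually land at $n^3$. As written, the sharpening step is named but not carried out.
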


There is one result in (\cite{Fuh2000}, Theorem E) we can improve. 

\begin{proposition}
\label{improved}
	Let $T$ be a finitely generated abelian group and let $A$ be a finitely generated $T$-module. Form the semidirect product
	\[G=A\rtimes T.\]
	Then $\delta_G(n)\preccurlyeq \max\{n^3, \delta_A^3(n^2)\}.$	
\end{proposition}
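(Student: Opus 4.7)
The plan is to adapt the argument of \cref{main2} to the semidirect product setting $G=A\rtimes T$, exploiting the fact that $T$ being abelian allows one to replace the exponential cost estimates of \cref{conjugate} and \cref{organizer} by polynomial ones. First I would reduce to the case $T$ free abelian of rank $k$, as in \cref{maintheorem1}, using that finite-index subgroups are again semidirect products of the same form and that $\delta_G$ is preserved up to $\approx$. Choose generators $\mathcal A=\{a_1,\ldots,a_m\}$ for $A$ as a $T$-module and $\mathcal T=\{t_1,\ldots,t_k\}$ for $T$; a finite presentation of $G$ reads
\[
G=\langle \mathcal A\cup\mathcal T \mid [t_i,t_j]=1,\ [a_i,a_j]=1,\ a_i^{t_j}=w_{ij},\ f_1,\ldots,f_l\rangle,
\]
where $w_{ij}\in F(\mathcal A)$ encodes the $T$-action and $\{f_1,\ldots,f_l\}$ is a Gr\"obner basis (in ordered form) for the submodule $S\subset M:=\bigoplus_i \mathbb Z T\cdot a_i$ with $A\cong M/S$. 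The crucial distinction from \cref{maintheorem1} is that $[t_i,t_j]$ is trivial here rather than an auxiliary generator $a_{ij}$, so the tameness machinery used there is unnecessary.

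The technical core of the proof is to establish polynomial analogues of the main lemmas of \cref{maintheorem3}, namely
\[
\area(a^u\cdot a^{-\bar u})\leqslant O(|u|^2)\qquad\text{and}\qquad \area([a,b^u])\leqslant O(|u|^2)
\]
for $a,b\in\mathcal A$ and $u\in F(\mathcal T)$. The first follows directly from reordering $u$ into its normal form $\bar u$ via $O(|u|^2)$ applications of $[t_i,t_j]=1$ carried out inside the conjugation. The second estimate is the main obstacle: a naive derivation would expand $b^u$ into a word in $F(\mathcal A)$ of length exponential in $|u|$, so I would instead induct on $|u|$ using the identity $[a,c^t]=[a^{t^{-1}},c]^t$ together with the action relation $a^{t^{-1}}=w$, moving one $t$-letter across at a time while keeping $b^u$ and all intermediate terms as single $\mathcal A$-conjugates rather than expanded $\mathcal A$-words.

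With these polynomial estimates in hand, the reduction of a word $w$ of length at most $n$ with $w=_G 1$ to ordered form would proceed exactly as in \cref{main2}. Parse $w=b_1^{v_1}\cdots b_s^{v_s}\cdot \nu$ at zero cost, with $s\leqslant n$, $|v_i|\leqslant n$, and $\nu\in F(\mathcal T)$; since $\pi(w)=1$ and $T$ is abelian, $\nu$ reduces to the empty word at cost $O(n^2)$ using only $[t_i,t_j]=1$. Applying the polynomial organizer to each $b_i^{v_i}$ and then sorting the $b_i^{\bar v_i}$ into the ordered form $\prod_i a_i^{\mu_i}$ via the polynomial commutator bound costs $O(n^3)$ in total and produces an element with $\sum_i|\mu_i|\leqslant n$, $\deg\mu_i\leqslant n$, and group-word length $O(n^2)$.

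The ordered form represents an element of $S$, so by definition of $\delta_A$ there exist $\alpha_1,\ldots,\alpha_l\in\mathbb Z T$ with $\prod_i a_i^{\mu_i}=\prod_j f_j^{\alpha_j}$ in $M$ and $\sum_j|\alpha_j|\leqslant \delta_A(O(n^2))$. Running the analogue of \cref{thelemma}(a)--(c), with the polynomial versions of \cref{conjugate} and \cref{organizer} substituted for the exponential ones, the cost of reassembling $\prod_j f_j^{\alpha_j}$ into the ordered form is bounded by a cubic polynomial in $\sum_j|\alpha_j|$, hence by $O(\delta_A^3(n^2))$. Summing all contributions yields $\area(w)\leqslant O(\max\{n^3,\delta_A^3(n^2)\})$, as required.
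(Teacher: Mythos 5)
Your proposal misses the crucial point that, despite the typographical notation $\delta_G$ in the statement, \cref{improved} is a result about the \emph{relative} Dehn function $\tilde\delta_G$ (relative to the variety $\mathcal S_2$ of metabelian groups): it appears in the section ``Computing Relative Dehn Functions,'' claims to improve Theorem E of \cite{Fuh2000} (a theorem about relative Dehn functions), and the paper's proof works with the relative presentation $\langle\cdots\rangle_{\mathcal S_2}$ and closes by invoking \cref{relativeConnection}, whose conclusion is about $\tilde\delta_G$. Read as a statement about the absolute Dehn function, the proposition is simply false: $BS(1,2)=\mathbb Z[1/2]\rtimes\mathbb Z$ is a semidirect product of exactly this form, its module Dehn function $\delta_A$ is polynomial (indeed the relative Dehn function is linear by \cref{knownResult}, so $\delta_A$ is at most linear by \cref{relativeConnection}), yet $\delta_{BS(1,2)}\approx 2^n$.

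The mathematical gap in your argument is precisely the step that would be needed to make the absolute version go through, namely the claim that $\area([a,b^u])\leqslant O(|u|^2)$ for $a,b\in\mathcal A$, $u\in F(\mathcal T)$, in the absolute presentation. This cannot hold. Your proposed induction via $[a,c^t]=[a^{t^{-1}},c]^t$ together with the action relation does not keep intermediate terms as ``single $\mathcal A$-conjugates'': after one step $a^{t^{-1}}$ becomes a word $w'\in F(\mathcal A)$ of length up to $D$ (where $D$ bounds the lengths of the $w_{ij}$), and $[w',b^{u'}]$ must be decomposed via $[xy,z]=[x,z]^y[y,z]$ into $D$ commutators $[a_i,b^{u'}]$. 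Iterating gives a recursion $\area([a,b^u])\leqslant D\cdot\area([\,\cdot\,,b^{u'}])+O(D)$ which unwinds to $D^{|u|}$, i.e., the same exponential bound as \cref{conjugate}, not a polynomial one. The misconception that ``$T$ being abelian'' buys you polynomial estimates is not the right dichotomy: $T$ is abelian in \cref{main2} as well. What actually makes the estimates polynomial is passing to the relative presentation $\langle\cdots\rangle_{\mathcal S_2}$, where the identity $[[x,y],[z,w]]=1$ holds for free and hence commuting two $\mathcal A$-conjugates has cost that is only linear in $|u|$ (\cref{relativeCommutative}), with no expansion of conjugates into $\mathcal A$-words. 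The paper's proof adopts exactly this relative framework, reduces $w$ to ordered form at cost $O(n^3)$ using \cref{relativeCommutative} and \cref{relativeConjugate}, and then invokes \cref{relativeConnection} for the remaining $\delta_A^3(n^2)$ contribution. Your outline after the erroneous lemma is structurally similar, but without switching to the relative setting the cubic estimate you aim for is unattainable.
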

	
\begin{proof}
	It is not hard to reduce the problem to the case when $T$ is free abelian. Thus we just assume that $T$ is a finitely generated free abelian group. Suppose $\mathcal T=\{t_1,t_2,\dots,t_k\}$ is a basis of $T$ and $\mathcal A=\{a_1,a_2,\dots,a_m\}$ generates the module $A$ over $\mathbb ZT$. Let $M$ be the free $T$-module generated by $\mathcal A$ and $S$ be a submodule of $M$ generated by $f_1,f_2,\dots,f_l$, where $f_i=\sum_{j=1}^m \alpha_{i,i} a_{j}$ for $1\leqslant i\leqslant k, \alpha_{i,j}\in \mathbb ZT$. Then we can write down a presentation of $G$ as follows
	\begin{align*} 
		G=\langle &a_1,a_2,\dots,a_m,t_1,t_2,\dots,t_m \mid [t_i,t_j]=1 (1\leqslant i<j \leqslant k), \\
		&[a_i,a_j^{w}]=1 (1\leqslant i<j \leqslant m, w\in \mathbb ZT), \prod_{j=1}^m a_j^{\alpha_{i,j}}=1 (1\leqslant i \leqslant l)\rangle.
	\end{align*}
	Then, by the same discussion as in \cref{relativedehn3}, we have a finite relative presentation of $G$:
	\begin{align*} 
		G=\langle &a_1,a_2,\dots,a_m,t_1,t_2,\dots,t_m \mid [t_i,t_j]=1 (1\leqslant i<j \leqslant k), \\
		&[a_i,a_j]=1, [a_i,a_j^{t_s}]=1 (1\leqslant i<j \leqslant m, 1\leqslant s \leqslant k), \prod_{j=1}^m a_j^{\alpha_{i,j}}=1 (1\leqslant i \leqslant l)\rangle_{\mathcal S_2}.
	\end{align*}
	Now let $w=_G 1$ and $|w|\leqslant n$. Since in this case all $t_i,t_j$ commutes, it is much easier than the general case. Following the same process as in the proof of \cref{main2}, $w$ can be convert to its ordered form $a_1^{\mu_1}a_2^{\mu_2}\dots a_m^{\mu_m}$, where $\deg(\mu_i)<n, \sum_{i=1}^m |\mu_i|\leqslant n$. The cost is bounded by $n^3$. Notice that the length of $a_1^{\mu_1}a_2^{\mu_2}\dots a_m^{\mu_m}$ is bounded by $n^2$. Then there exists $\alpha_1,\alpha_2,\dots,\alpha_l\in \mathbb ZT$ such that 
	\[a_1^{\mu_1}a_2^{\mu_2}\dots a_m^{\mu_m}=\prod_{i=1}^l f_i^{\alpha_i}, \sum_{i=1}^l |\alpha_i|\leqslant \delta_A(n^2).\]
	The rest of the proof is the same as the proof of \cref{relativeConnection}, since it is just a special case of \cref{relativeConnection}.
\end{proof}

\section{Examples and Further Comments}
\label{examples}

\subsection{Subgroups of Metabelian Groups with Exponential Dehn Function}
\label{examples1} 

It is time to implement our technique to some concrete examples and investigate all the obstacles preventing us to construct a finitely presented metabelian group with Dehn function that exceeds exponential function.

The class of examples we investigate in this section was introduced by Baumslag in 1973 \cite{Baumslag1973}. Let $A$ be a free abelian group of finite rank freely generated by $\{a_1,a_2,\dots,a_r\}$. Furthermore let $T$ be a finitely generated abelian group with basis $\{t_1,t_2,\dots,t_k,\dots,t_l\}$, where $t_1,\dots,t_k$ are of infinite order and $t_{k+1},\dots,t_l$ are respectively of finite order $m_{k+1},\dots,m_l$. Finally let $F=\{f_1,f_2,\dots,f_k\}$ be a set of element $f_i$ from $\mathbb ZT$, where each $f_i$ is of the form
\[f_i=1+c_{i,1}t_i+c_{i,2}t_i^2+\dots+c_{i,d_i-1}t_i^{d_{i}-1}+t_i^{d_i}, d_i\geqslant 1, c_{i,j}\in \mathbb Z.\]
Now let us define a group $W_F$ corresponds to $F$. The generating set is the following
\[X=\{a_1,a_2,\dots,a_r,t_1,t_2,\dots,t_l,u_1,\dots,u_k\},\]
where $r,k,l$ are the same integers as above. 

The defining relations of $W_F$ are of four kinds. First we have the power relations
\[t_i^{m_i}=1, i=k+1,\dots,l.\]
Next we have the commutativity relations 
\[\begin{cases}
[u_i,u_j]=1, &1\leqslant i,j\leqslant k;\\
[t_i,t_j]=1, &1\leqslant i,j\leqslant l;\\
[t_i,u_j]=1, &1\leqslant i\leqslant l, 1\leqslant j\leqslant k;\\
[a_i,a_j]=1, &1\leqslant i,j\leqslant r.
\end{cases}
\]
Thirdly we have the commutativity relations for the conjugates of the generators $a_i$:
\[[a^u_i,a^w_j]=1, 1\leqslant i,j\leqslant r,\]
where $u,w\in \{t_1^{\alpha_1}t_2^{\alpha_2}\dots t_l^{\alpha_l}\mid 0\leqslant \alpha_i\leqslant d_i\text{ for }i=1,\dots,k, 0\leqslant \alpha_i<m_i\text{ for }i=k+1,\dots,l\}.$
Finally we have relations defining the action of $u_j$ on $a_i$:
\[a_i^{u_j}=a_i^{f_j}, 1\leqslant i\leqslant r, 1\leqslant j\leqslant k. \]
It is not hard to show that $W_F$ is metabelian \cite{Baumslag1973}. Moreover, Baumslag showed the following:

\begin{proposition}[\cite{Baumslag1973}]
Given a free abelian group $A$ of finite rank and a finite generated abelian group $T$, there exists $F$ such that $A\wr T \hookrightarrow W_F$.
\end{proposition}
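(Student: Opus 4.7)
The plan is to identify $A\wr T$ with the subgroup $H \leqslant W_F$ generated by $\{a_1,\dots,a_r,t_1,\dots,t_l\}$, for an appropriate choice of $F$. The strategy is to compute the normal closure $N := \llangle a_1,\dots,a_r\rrangle_{W_F}$ as a module and to pick $F$ so that a free $\mathbb{Z}T$-submodule of rank $r$ sits naturally inside $N$ and matches the wreath-product base.

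First I would analyze $N$ as a module. Since $W_F$ is metabelian, $N$ is abelian, so conjugation makes it a module over the group ring of the abelian subgroup $\langle t_1,\dots,t_l,u_1,\dots,u_k\rangle \cong T \times \mathbb{Z}^k$, namely over $\mathbb{Z}T[u_1^{\pm 1},\dots,u_k^{\pm 1}]$. The action relations $a_i^{u_j} = a_i^{f_j(t_j)}$ translate into $(u_j - f_j(t_j))\cdot a_i = 0$, and since $u_j$ is a unit in the group ring (so its image in any quotient is again a unit), the ring effectively acting on $N$ is the localization
\[R := \mathbb{Z}T[u_1^{\pm 1},\dots,u_k^{\pm 1}] / (u_j - f_j(t_j) : 1\leqslant j\leqslant k) \cong \mathbb{Z}T[f_1^{-1},\dots,f_k^{-1}].\]
I would then show $N\cong R^r$ as $R$-modules with basis $a_1,\dots,a_r$. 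One direction, a surjection $R^r\twoheadrightarrow N$, is clear from the $R$-generation. For the reverse, I would construct a homomorphism $W_F\twoheadrightarrow R^r\rtimes(T\times\mathbb{Z}^k)$, where $T\times\mathbb{Z}^k$ acts on $R^r$ through the ring map $\mathbb{Z}[T\times\mathbb{Z}^k]\to R$, and verify that all four families of defining relations of $W_F$ hold in the target: the first two are built into the semidirect-product structure, the commutativity relations of the third kind hold because $R^r$ is abelian, and the fourth family is precisely how the action of $u_j$ is defined. Restricting to $N$ yields $N\twoheadrightarrow R^r$ sending $a_i\mapsto e_i$, and composing the two surjections $R^r\twoheadrightarrow N\twoheadrightarrow R^r$ gives the identity, forcing $N\cong R^r$.

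Next I would choose $F$ so that the natural map $\mathbb{Z}T\hookrightarrow R$ is injective. A concrete choice is $f_j(t_j):=1+t_j$ for every $j$; injectivity then reduces to showing that each $1+t_j$ is a non-zero-divisor in $\mathbb{Z}T$. Because $t_j$ has infinite order, a splitting $T = \langle t_j\rangle \oplus T_0$ exhibits $\mathbb{Z}T$ as a free (hence flat) module over the Laurent polynomial ring $\mathbb{Z}[t_j,t_j^{-1}]$, and $1+t_j$ is a non-zero-divisor in $\mathbb{Z}[t_j,t_j^{-1}]$; flatness transfers this to $\mathbb{Z}T$. With this choice the $\mathbb{Z}T$-submodule of $N\cong R^r$ generated by $a_1,\dots,a_r$ is exactly $\mathbb{Z}T^r$, free of rank $r$.

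Finally I would assemble the embedding. The projection $W_F \twoheadrightarrow W_F/N \cong T\times\mathbb{Z}^k$ sends $\langle t_1,\dots,t_l\rangle$ isomorphically onto $T\times\{0\}$, so $\langle t_j\rangle\cap N = \{1\}$ and $\langle t_j\rangle\cong T$ inside $W_F$. The subgroup $H = \langle a_i,t_j\rangle$ therefore decomposes as $N_H \rtimes \langle t_j\rangle$, where $N_H$ is the $\mathbb{Z}T$-submodule of $N$ generated by the $a_i$'s; by the previous paragraph $N_H = \mathbb{Z}T^r$, and $\mathbb{Z}T^r \rtimes T$ is exactly $A\wr T$ by the definition of the wreath product. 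The main obstacle is the middle step, namely pinning down the structure of $N$ as a free $R$-module; the existence of the surjection $W_F\twoheadrightarrow R^r\rtimes(T\times\mathbb{Z}^k)$ is what rigorously certifies that no hidden relations beyond those forced by the action appear in $N$.
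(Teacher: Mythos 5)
The paper itself does not prove this proposition; it cites \cite{Baumslag1973} and proves only the auxiliary \cref{submoduble}, which establishes by a bare-hands induction on $k$ the injectivity of $\mathbb{Z}T^r$ into the module $N$ (equivalently, $\mathbb{Z}T^r\cap S=0$). Your localization-plus-flatness route reaches the same point more conceptually, via the identification $N\cong R^r$ with $R=\mathbb{Z}T[f_1^{-1},\dots,f_k^{-1}]$ and the observation that each $f_j$ is a non-zero-divisor in $\mathbb{Z}T$; that is a genuinely different and cleaner way to package the key injectivity. However, your argument rests on a step that is asserted but not justified, and this is a real gap.

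You write that ``since $W_F$ is metabelian, $N$ is abelian.'' This inference is not valid: metabelianness only says that the derived subgroup $W_F'$ is abelian, and the normal closure $N=\llangle a_1,\dots,a_r\rrangle$ is in general strictly larger than $W_F'$. For instance, in the free metabelian group on $x,y$ (which is certainly metabelian), the normal closure $\llangle x\rrangle$ contains $[x,x^y]=[x,[x,y]]$, which in the module $M'\cong\mathbb{Z}[s^{\pm1},t^{\pm1}]$ equals $(s-1)[x,y]\neq 0$; hence $\llangle x\rrangle$ is nonabelian there. Every step of your module argument --- the $\mathbb{Z}[T\times\mathbb{Z}^k]$-action on $N$, the surjection $R^r\twoheadrightarrow N$, the sandwich $R^r\twoheadrightarrow N\twoheadrightarrow R^r$ --- presupposes abelianness of $N$, so the proof cannot get started without supplying it; and supplying it is precisely the nontrivial content of Baumslag's construction, achieved via the third family of defining relations of $W_F$ together with the action relations.

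Fortunately the gap closes immediately for the $F$ you later choose. With $f_j=1+t_j$ one has $f_j(1)=2$, so passing the action relation $a_i^{u_j}=a_i^{f_j}$ to $W_F^{ab}$ gives $(f_j(1)-1)a_i=a_i=0$; thus each $a_i$ lies in $W_F'$, so $N\leqslant W_F'$, and therefore $N$ is abelian because $W_F$ is metabelian. If you promote the choice $f_j=1+t_j$ to the very start of the argument and insert this observation before discussing the module structure, then the remainder --- the surjection $W_F\twoheadrightarrow R^r\rtimes(T\times\mathbb{Z}^k)$, the identification $N\cong R^r$, the flatness argument showing $\mathbb{Z}T\hookrightarrow R$, and the decomposition $H=\mathbb{Z}T^r\rtimes T\cong A\wr T$ --- is sound as written.
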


In particular, if $r=k=l$ and we let $f_i=1+t_i$ for all $i$, $W_F$ contains a copy of the free metabelian group of rank $r$.

We claim that
\begin{proposition}
\label{exponential}
$W_F$ has an exponential Dehn function.
\end{proposition}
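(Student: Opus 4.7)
The plan is to establish matching exponential upper and lower bounds on $\delta_{W_F}$.

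For the upper bound, I would apply the general framework of \cref{maintheorem} with a refinement that exploits the specific form of the defining relations. Let $N = \llangle a_1,\ldots,a_r\rrangle_{W_F}$, which is abelian, so $W_F/N$ is an extension of a finite group by a free abelian group of rank $2k$ (coming from $t_1,\ldots,t_k$ and $u_1,\ldots,u_k$). Naively, \cref{main} gives the bound $2^{n^{4k}}$. To improve this to $2^n$, I would observe that as a module over $\mathbb{Z}[W_F/N]$, the derived subgroup $N$ is the quotient of the free module on $a_1,\ldots,a_r$ by the submodule $S$ generated by $\{a_i(u_j - f_j)\}$. Because each $f_j$ is a polynomial in $t_j$ alone with unit constant term and unit leading coefficient, this set is essentially already a Gr\"obner basis for $S$: reduction modulo it amounts to repeatedly substituting $u_j \mapsto f_j$. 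Starting from an element of $N$ represented by a word of length $n$, the total degree in the $u_j$'s is at most $n$, and each substitution multiplies the $\ell^1$-norm of coefficients by at most $C := \max_j \|f_j\|_1$. Thus after reduction, the resulting element of the free $\mathbb{Z}T$-module on $\{a_i\}$ has coefficients of total $\ell^1$-norm at most $n\cdot C^n$. Combining this with the cost estimates in \cref{thelemma}, \cref{conjugate} and \cref{organizer} — all of which contribute only single-exponential factors in $n$ — yields $\delta_{W_F}(n) \preccurlyeq 2^n$.

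For the lower bound, I would adapt the argument of Kassabov and Riley for Baumslag's group $\Gamma$ \cite{KassabovRiley2010}. The key phenomenon is exponential subgroup distortion: the element $u_1^n a_1 u_1^{-n}$, a word of length $2n+1$ in $W_F$, equals $a_1^{f_1^n}$, whose ordered form requires $\Theta(C^n)$ conjugates of $a_1$. From this I would construct a family of words $w_n$ of length $O(n)$ that are trivial in $W_F$ but require $\Omega(2^n)$ cells to fill. A natural candidate is
\[
w_n \;=\; \bigl[\,u_1^n a_1 u_1^{-n},\; t_1^{N_n}\,u_1^n a_1 u_1^{-n}\,t_1^{-N_n}\,\bigr],
\]
where $N_n$ is chosen so that the supports of $a_1^{f_1^n}$ and $a_1^{t_1^{N_n} f_1^n t_1^{-N_n}}$ are disjoint; the word is trivial by metabelianness, and a filling must in effect ``witness'' the full expansion of $f_1^n$ in order to apply sufficiently many commutator relations between the conjugates of $a_1$. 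The Kassabov--Riley centralizer/counting argument then yields the exponential lower bound.

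The main obstacle is the lower bound. The upper bound is essentially a bookkeeping exercise once one recognizes that $\{a_i(u_j - f_j)\}$ is already a Gr\"obner basis (so the doubly exponential behavior in \cref{division} collapses to single exponential). The lower bound, by contrast, requires a genuine geometric filling argument: one must show that no clever cancellation allows the ``hidden exponential'' in $f_1^n$ to be avoided inside any van Kampen diagram for $w_n$. Adapting the Kassabov--Riley counting argument to the present setting, with its multiple commuting $u_j$'s and more general polynomials $f_j$, is the technical heart of the proof.
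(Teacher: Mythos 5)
Your overall plan is sound at the level of strategy, but both halves leave genuine gaps that the paper has to work to close, and the lower bound in particular is not carried out.

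\textbf{Lower bound.} You correctly identify the source of the exponential: $a_1^{u_1^n}$ unpacks to $a_1^{f_1^n}$, whose ordered form is exponentially long. But you assert that $|f_1^n| = \Theta(C^n)$ without justification, and this is precisely the nontrivial point. It is \emph{not} automatic that $\|f^n\|_1$ grows exponentially for an integer polynomial with $\|f\|_1 > 1$ --- for instance $f = t$ gives $\|f^n\|_1 \equiv 1$, and in general cancellation between terms could in principle keep the norm from growing. The paper isolates this as \cref{tensor}: for $f(t) = 1 + c_1 t + \dots + t^d$ with $d \geq 1$ there exists $\alpha > 1$ with $|f^n| > \alpha^n$; the proof requires a genuine analytic argument (if $|f|\leq 1$ on the unit circle, Cauchy's formula and a Chebyshev-polynomial identity give a contradiction). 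Without such a lemma your claim of exponential distortion is unsupported. Beyond that, you gesture at ``the Kassabov--Riley centralizer/counting argument'' without carrying it out; the paper instead proves a concrete $u_1$-band lemma (\cref{bands}: $u_i$-bands do not self-intersect, distinct $u_i$- and $u_j$-bands cross at most once, and the top/bottom words have controlled form) and uses it to force any filling of the shorter and simpler word $[a_1^{u_1^n}, a_1]$ to have an exponentially long $u_1$-band. Your proposed word $w_n$ with the $t_1^{N_n}$-shift is more complicated than necessary and the disjoint-support condition is not what drives the argument; what matters is that successive $u_1$-band boundaries carry $a_1^{-f_1^i}$ for $i = 1,\dots,m$ with $m \gtrsim n/2$, which is exactly what the band machinery extracts.

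\textbf{Upper bound.} Your Gr\"obner-basis claim is shaky. The generator $(u_j - f_j)e_i$ has $u_j$ appearing with total degree $1$, while $f_j$ has a term $t_j^{d_j}$ of degree $d_j$, which can exceed $1$; under the paper's graded order the leading monomial of $u_j - f_j$ need not be $u_j$ at all, so ``reduction amounts to substituting $u_j \mapsto f_j$'' does not describe Gr\"obner reduction, and you would also need to say which ring and order you are working over (the paper passes through $\mathbb{Z}[u_i^{\pm 1}, t_j^{\pm 1}]$ via the $t t^{-1} - 1$ trick, which changes the degree bookkeeping). The paper avoids this issue entirely: it makes the substitution $u_i = (u_i - f_i) + f_i$ by hand, expands, and then uses \cref{submoduble} --- if $h = \sum h_i e_i \in S$ has all $h_i \in \mathbb{Z}T$ then $h = 0$ --- to conclude that the residual $\rho \in \mathbb{Z}T$ vanishes. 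That lemma (proved by an inductive evaluation-at-$f_1$ argument and the fact that $f_1$ is not a zero divisor) is the structural input you would need to establish before the bookkeeping with \cref{thelemma}, \cref{conjugate}, and \cref{organizer} can close; it is not in your proposal. If you can verify that $\{(u_j - f_j)e_i\}$ is a Gr\"obner basis for a suitable order, your route would work and would be a slightly cleaner formulation, but as written it is an assertion rather than a proof.
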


Note that when $i=j=k=1$, $f_1=1+t_1$, $W_F$ is the Baumslag group $\Gamma=\langle a,s,t \mid [a,a^t]=1,[s,t]=1, a^s=aa^t \rangle$. The exponential Dehn function of this special case is proved in \cite{KassabovRiley2010}.

We need a few lemmas before we prove \cref{exponential}. First, let us denote the abelian groups generated by $\{t_1,t_2,\dots.t_l\}$ and $\{u_1,u_2,\dots,u_k\}$ by $T$ and $U$ respectively. 

\begin{lemma}
\label{submoduble}
	Let $M$ be a free $(U\times T)$-module with basis $e_1,\dots,e_r$. Let $S$ be the submodule of $M$ generated by $\{(u_i-f_i)e_j\mid 1\leqslant i\leqslant k, 1\leqslant j \leqslant r\}$. If $h=h_1e_1+h_2e_2+\dots +h_r e_r \in S$ such that $h_i\in \mathbb ZT$ for all $i$. Then $h=0.$
\end{lemma}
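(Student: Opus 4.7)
The natural reformulation is ring-theoretic. Identify $\mathbb{Z}[U\times T]\cong \mathbb{Z}T[u_1^{\pm 1},\dots,u_k^{\pm 1}]=:R$, so $M=\bigoplus_{j=1}^r R e_j$. The generators of $S$ are $(u_i-f_i)e_j$, so an element $h=\sum h_j e_j\in S$ is precisely one for which every coordinate $h_j$ lies in the ideal $I:=(u_1-f_1,\dots,u_k-f_k)\subseteq R$. Since the hypothesis asks $h_j\in\mathbb{Z}T$, the lemma reduces to the single ring-theoretic assertion
\[
I\cap \mathbb{Z}T \;=\; 0.
\]

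The plan to prove this is to construct an evaluation homomorphism $\psi\colon R\to \widetilde{\mathbb{Z}T}$ that is the identity on $\mathbb{Z}T$ and sends $u_i\mapsto f_i$, where $\widetilde{\mathbb{Z}T}$ denotes the localization of $\mathbb{Z}T$ at the multiplicative set generated by $f_1,\dots,f_k$. Because the $u_i$ are invertible in $R$, the map $\psi$ is only well-defined once the $f_i$ become units in the target, which is why we localize. Given $\psi$, we have $I\subseteq \ker\psi$, hence any $h\in I\cap\mathbb{Z}T$ satisfies $\psi(h)=h=0$ in $\widetilde{\mathbb{Z}T}$; provided $\mathbb{Z}T\hookrightarrow\widetilde{\mathbb{Z}T}$ is injective, this forces $h=0$.

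Both the well-definedness of $\widetilde{\mathbb{Z}T}$ as a localization with the injectivity property and the construction of $\psi$ boil down to the same point, which is the main technical step: each $f_i$ must be a non-zero-divisor in $\mathbb{Z}T$. To verify this, write $T=\mathbb{Z}^k\oplus T_0$ with $T_0$ finite, and set $A:=(\mathbb{Z}T_0)[t_j^{\pm 1}:j\ne i]$, so that $\mathbb{Z}T = A[t_i,t_i^{-1}]$. The element $f_i=1+c_{i,1}t_i+\dots+t_i^{d_i}$ is a Laurent polynomial in $t_i$ over $A$ that is monic (leading coefficient $1\in A$) and of positive $t_i$-degree. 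For any nonzero $g\in A[t_i,t_i^{-1}]$, writing $g=\sum_{p=p_1}^{p_2} g_p t_i^p$ with $g_{p_2}\ne 0$, the top $t_i$-degree coefficient of $gf_i$ is $g_{p_2}\cdot 1=g_{p_2}\ne 0$, so $gf_i\ne 0$. Thus $f_i$ is a non-zero-divisor, and so is any product $f_1^{N_1}\cdots f_k^{N_k}$.

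With that in hand, the localization $\widetilde{\mathbb{Z}T}$ exists and contains $\mathbb{Z}T$ injectively; the homomorphism $\psi$ is unambiguously defined as the $\mathbb{Z}T$-algebra map sending $u_i\mapsto f_i$ (since $f_i$ is a unit of $\widetilde{\mathbb{Z}T}$, the relation $u_i u_i^{-1}=1$ survives); it annihilates each generator $u_i-f_i$ and hence all of $I$. The containment $I\cap\mathbb{Z}T\subseteq\ker\psi\cap\mathbb{Z}T=0$ closes the argument. The main obstacle is really just the non-zero-divisor check for $f_i$, which is delicate only because $T$ can have torsion; the monicity in $t_i$ over the auxiliary coefficient ring $A$ is exactly what makes the degree argument go through.
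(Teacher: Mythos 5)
Your proof is correct, but it takes a genuinely different route from the paper's. The paper argues by induction on $k$: the base case $k=1$ is a degree-in-$u_1$ comparison showing $\alpha_i(u_1-f_1)$ cannot lie in $\mathbb{Z}T$ unless $\alpha_i=0$, and the inductive step multiplies $h=\sum\alpha_{i,j}(u_j-f_j)e_i$ by $u_1^N$ (to clear negative powers of $u_1$), substitutes $u_1\mapsto f_1$, and reduces to the $(k-1)$-variable case, finishing with the observation that $f_1$ is not a zero divisor. You replace the whole induction with a single global evaluation homomorphism $\psi\colon R\to\widetilde{\mathbb{Z}T}$ into the localization of $\mathbb{Z}T$ at the $f_i$'s, kill the ideal $I=(u_1-f_1,\dots,u_k-f_k)$ at once, and conclude from injectivity of $\mathbb{Z}T\hookrightarrow\widetilde{\mathbb{Z}T}$. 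Both proofs pivot on the same technical fact, that each $f_i$ is a non-zero-divisor in $\mathbb{Z}T$; your monic-leading-coefficient argument over $A=\mathbb{Z}T_0[t_j^{\pm1}:j\ne i]$ is a clean way to see this (and correctly does not need $A$ to be a domain, only that the leading coefficient $1$ is a unit). The paper's ``multiply by $u_1^N$, then substitute $u_1\mapsto f_1$'' is really a hands-on, one-variable-at-a-time version of the same evaluation you build globally via localization: yours is the more conceptual and shorter argument at the cost of invoking localization and its universal property, while the paper's is more elementary and self-contained. Your reduction to $I\cap\mathbb{Z}T=0$ via $S=\bigoplus_j Ie_j$ is also a nice organizational improvement that the paper leaves implicit.
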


\begin{proof}
	If $k=1$, then $h\in S$ means there exists $\alpha_1,\alpha_2,\dots,\alpha_r\in \mathbb Z(U\times T)$ such that 
	\[h=\alpha_1(u_1-f_1)e_1+\alpha_2(u_1-f_1)e_2+\dots+\alpha_r(u_1-f_1)e_r.\]
	Since $h=h_1e_1+h_2e_2+\dots +h_r e_r$, then $h_i=\alpha_i(u_1-f_1)$. Note that $h_i\in \mathbb ZT$. It follows that $\alpha_i(u_1-f_1)$ does not have any term involves $u_1$. Suppose $\alpha_i\neq 0$ for some $i$. Because $f_1\in \mathbb ZT$, $\deg_{u_1}(\alpha_i u_1)>\deg_{u_1}(\alpha_i f_1)$. Thus $\alpha_i(u_1-f_1)$ has at least one term contains $u_1$, that leads to a contradiction. 
	
	If the statement of $k=n$ has been proved, for $k=n+1$, we have
	\[h=\sum_{i=1}^r\sum_{j=1}^{n+1} \alpha_{i,j}(u_j-f_j)e_i.\]
	We choose an integer $N$ large enough such that $u_1^N\alpha_{i,j}$ does not have any negative power of $u_1$ for all $i,j$. Then
	\[u_1^Nh=\sum_{i=1}^r\sum_{j=1}^{n+1} f_1^N\alpha_{i,j}(u_j-f_j)e_i=:\sum_{i=1}^r\sum_{j=1}^{n+1} \beta_{i,j}(u_j-f_j)e_i,\]
	where $\beta_{i,j}=u_1^{N}\alpha_{i,j}$. We regard $\beta_{i,j}(u_1)$ as a polynomial of $u_1$. Replacing $u_1$ by $f_1$, we have
	\[f_1^N h=\sum_{i=1}^r \sum_{j=2}^{n+1} \beta_{i,j}(f_i)(u_j-f_j)e_i.\]
	Note that $f_1^N h_i\in \mathbb ZT$ for $i=1,\dots, r$, then by the inductive assumption, $f_1^N h_i=0$ for all $i$. Since $f_1=1+c_{1,1}t_1+c_{1,2}t_1^2+\dots+c_{i,d_1-1}t_1^{d_{1}-1}+t_1^{d_1}$ and $t_1$ has infinite order, then $f_1$ is not a zero divisor in $\mathbb Z(U\times T)$. Thus $h_i=0$ for all $i$.
	
	Therefore $h=0$. The induction finishes the proof.
\end{proof}

It follows that if $a_1^{h_1}a_2^{h_2}\dots a_r^{h_r}=_{W_F} 1$ such that $h_i\in \mathbb ZT$ for all $i$, then $h_i=0$ as an element in $\mathbb Z(U\times T)$ for every $i$. To convert it to 1, we only need those metabelian relations to commute all the conjugates of $a_i$'s. By \cref{metabelianness}, it will cost at most exponentially many relations with respect to the length of the word to kill the word.

Next, let $w=_{W_F}1$ and consider the minimal van Kampen diagram $\Delta$ over $W_F$. There are two types of relations contain $u_i$: (1) commutative relations $[u_i,u_j]=1, [u_i,t_s]=1, j\neq i, 1\leqslant s\leqslant l$; (2) action relations $a_j^{u_i}=a_j^{f_i}, 1\leqslant j\leqslant r$. Those cells, in the van Kampen diagram, form a $u_i$-band. 

\begin{figure}[H]
		\centering
			\includegraphics[width=15cm]{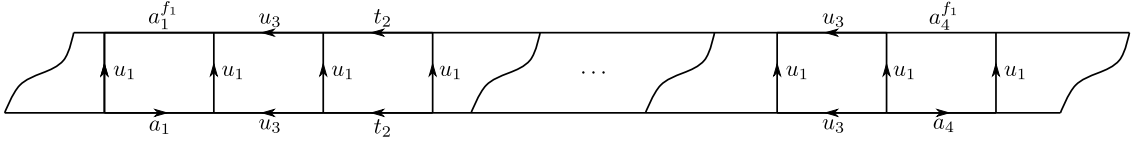}
			\caption{an example of a $u_1$-bands}
\end{figure}

We have some properties for $u_i$-bands in a van Kampen diagram over $W_F$.

\begin{lemma}
\label{bands}
\begin{enumerate}[(i)]
	\item The top (or bottom) path of a $u_i$-band is a word $w$ that all $t_s, u_j$ for $s,j\neq i$ are in the same orientation, i.e. the exponents of each letter $t_s,u_j$'s are either all 1 or all $-1$. In particular, 
	\[w=_{W_F}a_1^{h_1}a_2^{h_2}\dots a_r^{h_r}t_1^{\alpha_1}\dots t_l^{\alpha_l}u_1^{\beta_1}\dots u_k^{\beta_k},\]
	where $h_i\in \mathbb Z(U\times T)$, $\sgn(\alpha_i)=\sgn(\beta_j)$ for all $i,j$, and $\alpha_s$ (or $\beta_j$) is equal to the number of times of $t_s$ (resp. $u_j$) appears in $w$ for $s,j\neq i$.
	\item $u_i$-bands do not intersect each other. In particular, a $u_i$-band does not self-intersect. 
	\item If $i\neq j$, a $u_i$-band intersects a $u_j$-band at most one time. 
\end{enumerate}	
\end{lemma}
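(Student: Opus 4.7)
All three parts rest on a classification of the cells that can belong to a $u_i$-band over $W_F$: the only defining relations containing $u_i$ are the commutativity relations $[u_i,u_j]=1$ ($j\neq i$), $[u_i,t_s]=1$ ($1\leqslant s\leqslant l$), and the action relations $a_j^{u_i}=a_j^{f_i}$ ($1\leqslant j\leqslant r$). A direct inspection of each cell identifies the words read along its top and bottom arcs: each commutativity cell contributes a single letter $u_s^{\pm 1}$ or $t_s^{\pm 1}$ to both arcs, while each action cell contributes $a_j$ to one arc and the word $a_j^{f_i}$ (involving only $a_j$ and $t_i^{\pm 1}$) to the other.

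For (i), once a direction along the band is fixed, every $u_i$-edge is oriented consistently, which forces every letter contributed by a commutativity cell to carry the same sign on the top path; action cells do not contribute any $t_s$ or $u_j$ with $s,j\neq i$ on that side, so sign-coherence of those letters is immediate. The ``in particular'' normal form then follows from the metabelian structure: since $A=\llangle \mathcal A\rrangle$ is abelian and normal with $W_F/A\cong T\times U$ abelian, the $a$-letters along the top path can be collected to the left as conjugates $a_j^x$ with $x\in T\times U$, which are identified with elements of the free $(U\times T)$-module with basis $a_1,\dots,a_r$, and the remaining $T\times U$-suffix records precisely the sign-coherent net counts $\alpha_s,\beta_j$.

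For (ii) and (iii), I would argue by minimality of the van Kampen diagram. If $B$ is a self-intersecting $u_i$-band in a minimal diagram, let $\Delta'$ be the subdiagram enclosed by its top path $w$; then $w=_{W_F}1$, and projecting to $W_F/A\cong T\times U$ combined with the sign-coherence from (i) and the torsion relations $t_s^{m_s}=1$ ($s>k$) forces the $t$- and $u$-exponents of $w$ to vanish, so $w$ lies in $A$. \Cref{submoduble} then identifies the corresponding module element as zero, and one can cut along the annulus and replace $\Delta'$ together with $B$ by a diagram consisting purely of cells in the $A$-part of the presentation, strictly reducing the cell count and contradicting minimality. Part (iii) is analogous: two intersections of a $u_i$- and a $u_j$-band ($i\neq j$) enclose a subdiagram whose boundary, read via (i) along each band, is again forced by \cref{submoduble} to be module-trivial, enabling the same reduction.

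The crux is the surgery step in (ii) and (iii): one must verify that the replacement diagram really has fewer cells. The delicate bookkeeping concerns two points. First, the action cells along the band contribute $a_j^{f_i}$ rather than a single $a_j$ on one side, so when one attempts to collapse the subdiagram the $T$-exponents of the $h_i$'s must be tracked carefully before invoking \cref{submoduble}. Second, for the torsion generators $t_{k+1},\dots,t_l$, sign-coherence of (i) alone does not kill the exponent; it has to be combined with $t_s^{m_s}=1$, and one must check the coherence survives this reduction. Once these points are handled, the standard band-surgery machinery goes through.
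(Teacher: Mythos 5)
Your proof of (i) is essentially the paper's: you classify the cells, invoke the consistent orientation of the $u_i$-edges along the band to get sign-coherence of the $t_s,u_j$ ($s,j\neq i$) on the top/bottom path, and then collect the $a$-letters as conjugates to obtain the ordered form.

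For (ii) and (iii), you take a genuinely different route, and that route has real gaps. The paper's argument is purely combinatorial and does not use minimality at all: for (ii), a $u_i$-band has no $u_i$-edges on its top or bottom path, so no $u_i$-band (including itself) can cross it — any such crossing would require a $u_i$-edge there; for (iii), planarity forces the relative orientation of the $u_i$- and $u_j$-edges at two hypothetical crossing cells to be opposite, contradicting the fact that both cells carry the same relator $[u_i,u_j]=1$ with the band directions fixed. This is a standard band-theoretic observation, short and elementary.

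Your proposal instead runs a band-surgery argument on a minimal diagram, invoking \cref{submoduble}. Even granting the reduction to $h_i\in\mathbb ZT$ (which itself needs an extra sentence — you must argue that the $u_j$, $j\neq i$, have zero net exponent and hence by sign-coherence do not appear at all, and separately that $u_i$ cannot appear because the band's top and bottom paths contain no $u_i$-edges), the decisive step is unsupported: you assert that cutting out $\Delta'$ together with the band and refilling "strictly reduces the cell count," but you give no bound on the size of a refilling of the module-trivial boundary word in terms of the removed subdiagram, nor do you track the action cells $a_j^{u_i}=a_j^{f_i}$, which make the inner and outer boundary words of the band annulus genuinely different. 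You flag this yourself as "the crux," and it is exactly where the argument would have to be done; without it the contradiction with minimality is not established. Moreover, even if completed, your argument would only establish (ii)–(iii) for minimal diagrams, a strictly weaker statement than the paper's, whose combinatorial argument applies to any van Kampen diagram. In short: for (i) you are aligned with the paper; for (ii) and (iii) you should abandon the surgery idea and use the elementary crossing/orientation argument, which is both simpler and more general.
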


\begin{proof}
	\begin{enumerate}[(i)]
		\item By the definition of a $u_i$, all letters $t_s,u_j, s,j\neq i$ of the top (or bottom) path must share the same direction. The second half of the statement can be proved basically the same way as we did for the ordered form (See \cref{maintheorem2}). 		
		\item Because there is no $u_i$ on the top or the bottom path of a $u_i$-band, two $u_i$-bands cannot intersect each other. 
		\item If $i\neq j$ and a $u_i$-band intersects a $u_j$-band. Since the van Kampen diagram is a planer graph, by comparing the orientation, it is impossible for a $u_i$-band to intersect a $u_j$-band twice (or more).
	\end{enumerate}
\end{proof}

Last, we have

\begin{lemma}
\label{tensor}
	Let $f(t)=t^d+c_{d-1}t^{d-1}+\dots+c_1t+1\in \mathbb Z[t], c_i\in \mathbb Z, d>0$. Then there exists $\alpha>1$ such that $|(f(t))^n|>\alpha^n$ for all $n$.
\end{lemma}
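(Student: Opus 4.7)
The plan is to use the fact that the $\ell^1$-norm of the coefficients of a polynomial dominates its supremum on the unit circle, and then to control that supremum from below by Parseval's identity, exploiting that $f$ is monic with constant term $1$.

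First, I would record the elementary inequality: for any $g(t)=\sum_i a_i t^i \in \mathbb Z[t]$ and any $z\in\mathbb C$ with $|z|=1$,
\[
|g(z)| \;\leq\; \sum_i |a_i|\,|z|^i \;=\; \sum_i |a_i| \;=\; |g|.
\]
Applied to $g = f^n$, this gives
\[
|f^n| \;\geq\; \sup_{|z|=1}\,|f(z)^n| \;=\; \Bigl(\sup_{|z|=1}|f(z)|\Bigr)^n.
\]
So it suffices to show that $M := \sup_{|z|=1}|f(z)| > 1$, since then $\alpha := M$ works.

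Next, I would bound $M$ from below by an $L^2$-average via Parseval. On the unit circle, writing $z = e^{i\theta}$,
\[
M^2 \;\geq\; \frac{1}{2\pi}\int_0^{2\pi} |f(e^{i\theta})|^2\,d\theta \;=\; \sum_{i=0}^{d} |c_i|^2,
\]
where $c_0=1$, $c_d=1$, and $c_1,\dots,c_{d-1}\in\mathbb Z$ are the remaining coefficients of $f$. Because the constant term and leading coefficient are both $1$ (and $d\geq 1$), the sum of squares of coefficients is at least $2$. Hence $M \geq \sqrt 2$, and we obtain
\[
|f^n| \;\geq\; M^n \;\geq\; (\sqrt 2)^n,
\]
so the lemma holds with $\alpha = \sqrt 2$.

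There is no real obstacle here once one sees that the integer hypothesis on the coefficients, combined with monic/constant-$1$, forces the $L^2$-norm on the unit circle to be bounded away from $1$. The only minor subtlety is checking that the inequality $|g(z)|\leq|g|$ applies verbatim to polynomials in $\mathbb Z[t]$ rather than elements of a group ring, but this is immediate since $f(t)\in\mathbb Z[t]$ is exactly such a polynomial and $|\cdot|$ on $\mathbb Z[t]$ agrees with the $\ell^1$-norm of the coefficient sequence as defined earlier in the paper.
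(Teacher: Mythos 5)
Your proof is correct and takes a genuinely different, cleaner route than the paper's. Both arguments begin with the same observation that $|f^n| \geq \sup_{|z|=1}|f(z)|^n =: M^n$, so that it suffices to show $M>1$. The paper then argues by contradiction: if $M\leq 1$, Cauchy's integral formula applied to the constant term gives $1=|f(0)|\leq \frac{1}{2\pi}\int_0^{2\pi}|f(e^{i\theta})|\,d\theta \leq M \leq 1$, forcing $|f|\equiv 1$ a.e.\ on the circle, and this last possibility is ruled out by a Chebyshev-polynomial computation showing that $|f(e^{i\theta})|^2=1$ cannot hold identically. Your approach replaces this $L^1$ estimate and case analysis with a single $L^2$ estimate: Parseval gives $M^2 \geq \frac{1}{2\pi}\int_0^{2\pi}|f(e^{i\theta})|^2\,d\theta = \sum_i|c_i|^2 \geq 2$, because \emph{both} the constant term and the leading coefficient equal $1$. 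This is not only shorter but strictly sharper, yielding the explicit quantitative bound $M\geq\sqrt 2$ with no degenerate case to exclude; the paper's $L^1$ argument only exploits the constant term and hence only gets $M\geq 1$, which is why it needs the extra Chebyshev step. The one cosmetic point worth noting is that the lemma asks for strict inequality $|f^n|>\alpha^n$, while you conclude $|f^n|\geq(\sqrt 2)^n$; this is remedied by taking any $\alpha\in(1,\sqrt 2)$, so there is no real gap.
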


\begin{proof}
	We denote that $(f(t))^n=\sum_{i=0}^{nd} c_{n,i}t^i$. 
	
	Consider the corresponding holomorphic function $g(z)=z^d+c_{1,d-1}z^{1,d-1}+\dots+c_{1,1}z+1$. If $\exists z_0, |z_0|=1$ such that $|g(z_0)|>1$, we have
	\[|g(z_0)|^n=|(g(z_0))^n|=|\sum_{i=0}^{nd} c_{n,i} z_0^i|\leqslant \sum_{i=0}^{nd} |c_{n,_i}|=|f^n|.\]
	Then we are done. 
	
	Now suppose $|g(z)|\leqslant 1$ for all $|z|=1$. Then by Cauthy's integral formula we have
	\[1=g(0)=\frac{1}{2\pi\mathrm{i}} \int_{|z|=1} \frac{g(z)}{z}dz.\]
	Take modulus on both sides:
	\[1=|\frac{1}{2\pi\mathrm{i}} \int_{|z|=1} \frac{g(z)}{z}dz|\leqslant \frac{1}{2\pi}\int_{\theta=0}^{2\pi} |g(e^{\mathrm{i}\theta})|d\theta\leqslant 1.\]
	Therefore $|g(z)|=1$ for $|z|=1$ almost everywhere. Let $z=e^{\mathrm{i\theta}}$, we have
	\[g(e^{\mathrm{i}\theta})=(\sum_{j=0}^d c_{1,j}\cos(i\theta))+\mathrm{i}(\sum_{j=0}^d c_{1,j}\sin(j\theta)).\]
	Then
	\[(\sum_{j=0}^d c_{1,j}\cos(i\theta))^2+(\sum_{j=0}^d c_{1,j}\sin(j\theta))^2=\sum_{h=0}^d c_{1,j}^2+2\sum_{j<k} c_{1,j}c_{1,k}\cos((k-j)\theta)=1, \]
	holds for all $\theta$. But $\cos((k-j)\theta)$ is a polynomial with respect to $\cos \theta$, i.e. $\cos((k-j)\theta)=T_{k-j}(\cos\theta)$, where $T_{m}(x)$ is the $m$-th Chebyshev polynomial. The leading term of $T_m(x)$ is $2^{m-1}x^m$. Thus
	\[\sum_{h=0}^d c_{1,j}^2+2\sum_{j<k} c_{1,j}c_{1,k}T_{k-j}(\cos \theta)=1, \forall \theta.\]
		Note the leading term of the left-hand side is $2^{d-1}\cos^d \theta$. That leads a contradiction since the equation above has at most $d$ solution for $\cos \theta$.
\end{proof}

\begin{proof}[Proof of \cref{exponential}]

First, we show that the lower bound is exponential. Consider the word $w=[a_1^{u_1^n},a_1]$. $w$ is of length $2n+4$ and $w=_{W_F}1$. Let $\Delta$ be a minimal Van-Kampen diagram with boundary label $w$. By comparing the orientation, $u_1$-bands starting at the top left of $\Delta$ will end at either bottom left or top right. By \cref{bands}, $u_1$-bands do not intersect each other, then we can suppose at least half of the $u_1$-bands starting at the top left end at the top right. See in Figure 4, the shaded areas are $u_1$-bands.

\begin{figure}[H]
		\centering
			\includegraphics[width=16cm]{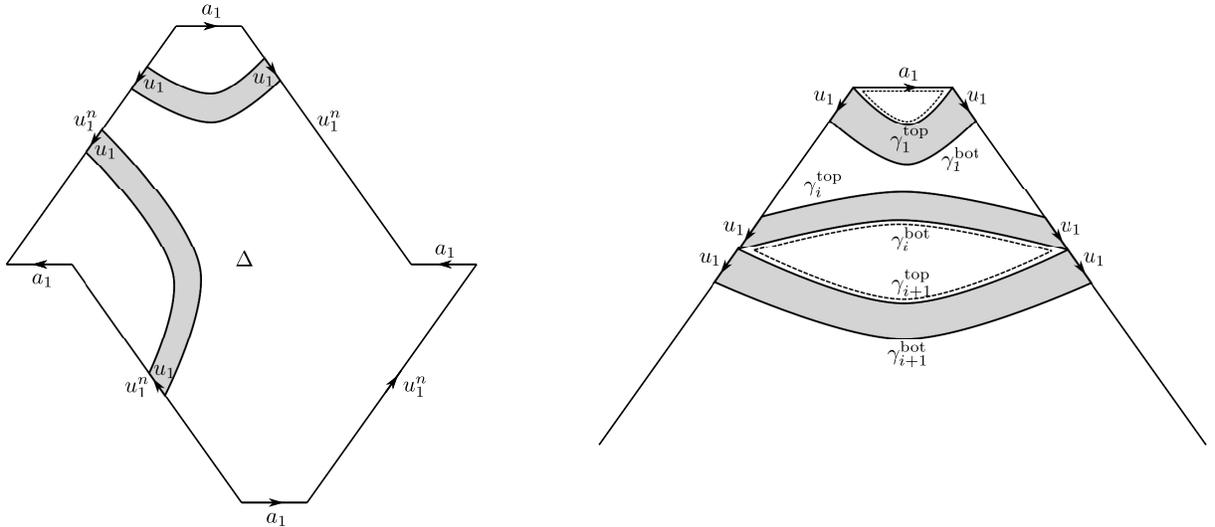}
			\caption{$u_1$-bands in $\Delta$}
\end{figure}

We first claim that there are no cells containing $t_s,u_j, s,j>1$ on each $u_1$-band. We denote the top and bottom path of the $i$-th $u_1$-band from the top by $\gamma_i^{\mathrm{top}}$ and $\gamma_i^{\mathrm{bot}}$, where $i=1,2,\dots,m$, $m>\frac{n}{2}$. Assuming a $u_i$-band intersect one of the $u_1$-band, again by \cref{bands} (ii), (iii), it can neither intersect a $u_1$-band twice nor intersect itself. Thus it has to end all the way to the boundary of $\Delta$. A contradiction. 

Then if there exists a cell containing $t_s$ for $s>1$ in the top most $u_1$-band, then by \cref{bands} (i), $\gamma_1^{\mathrm{top}}$ is a word $a_1^{h_1}a_2^{h_2}\dots a_r^{h_r}t_1^{\alpha_1}\dots t_l^{\alpha_l}$. Thus $\gamma_1^{\mathrm{top}}$ and $a_1$ form a cycle $\gamma$. We have 
    \[a_1^{h_1+1}a_2^{h_2}\dots a_r^{h_r}t_1^{\alpha_1}\dots t_l^{\alpha_l}=1, \alpha_i\neq 0.\]
    It leads to a contradiction since the image of the left hand side in $U\times T$ is not trivial. And by definition of a $u_1$-band, if $\gamma_{i}^{\mathrm{top}}$ does not have any $t_s, s>1$, neither does $\gamma_{i}^{\mathrm{bot}}$. Next consider two consecutive $u_1$-bands. If $\gamma_i^{\mathrm{bot}}$ does not have $t_s$, then by the same argument, neither does $\gamma_{i+1}^{\mathrm{top}}$. Therefore the claim is true. 

	Denote the words of $\gamma_i^{\mathrm{top}}$ and $\gamma_i^{\mathrm{bot}}$ by $w_i^{\mathrm{top}}$ and $w_i^{\mathrm{bot}}$ respectively. Such words only consist of $a_i$'s and $t_1$. Note that $w_i^{\mathrm{bot}}=w_{i+1}^{\mathrm{top}}$ for $i=1,\dots, m-1$. Since $w_1^{\mathrm{bot}}=a_1^{-f_1}$, by the same discussion above, $w_i^{\mathrm{top}}=a_1^{-f_1^{i-1}}, w_i^{\mathrm{bot}}=a_1^{-f_1^i}$ (See in Figure 4). Next we focus on the number of $a_1$ in each $w_i^{\mathrm{bot}}$, which is at least $|f_1^i|$. By \cref{tensor}, there exists $\alpha>1$ such that $|f_1^i|>\alpha^i$. Therefore, the number of $a_1$ in $w_m^{\mathrm{top}}$ is at least $\alpha^{m-1}$. Since $m>\frac{n}{2}$, the number of cells in the $m$-th $u_1$-band is at least $\alpha^{[\frac{n}{2}]}$. Thus the area of $[a_1^{u_1^{n}},a_1]$ is at least $\alpha^{[\frac{n}{2}]}$. It follows that the lower bound is exponential. 

For the upper bound, as \cref{metabelianness} suggests, all we need is to consider how to solve the membership problem of the submodule $S$ where $S$ is generated by $\{(u_i-f_i)e_j\mid 1\leqslant i\leqslant k, 1\leqslant j \leqslant r\}$. Suppose $w=1$ with $|w|\leqslant n$, the $w$ has a ordered form as 
	\[w=_{W_F}a_1^{g_1}a_2^{g_2}\dots a_r^{g_r}, g_i\in \mathbb Z(U\times T).\]
	And the cost of converting $w$ to its ordered form is exponential with respect to $n$ as we showed in \cref{maintheorem}. Also note that $\deg(g_i),|g_i|\leqslant n$ for all $i$. WLOG, we assume that the all exponents of $u_i$'s are positive. The corresponding module element of $w$ is 
	\[g_1e_1+g_2e_2+\dots+g_re_r.\]
	For each term $t_1^{\alpha_1}t_2^{\alpha_2}\dots t_l^{\alpha_l}u_1^{\beta_1}u_2^{\beta_2}\dots u_k^{\beta_k}$, $\alpha_i\in \mathbb Z,\beta_i\geqslant 0$, we replace $u_i$ by $u_i-f_i+f_i$. Then we convert $t_1^{\alpha_1}t_2^{\alpha_2}\dots t_l^{\alpha_l}u_1^{\beta_1}u_2^{\beta_2}\dots u_k^{\beta_k}$ to a form 
	\[\sum_{i=1}^k \eta_i(u_i-f_i)+\tau, \eta_i\in \mathbb Z(U\times T), \tau\in \mathbb ZT.\]
	If $|\alpha_1|+\dots+|\alpha_l|+|\beta_1|+\dots+|\beta_k|<n$, then $\deg(\eta_i),\deg(\tau)<Dn, |\eta_i|,|\tau|<D^n$, where $D=\max\{d_1,\dots,d_k, |f_1|,\dots,|f_k|\}$. Therefore, replacing $u_i$ by $u_i-f_i+f_i$ in every term of $w$, we have
	\[g_1e_1+g_2e_2+\dots+g_re_r=\sum_{i=1}^r\sum_{i=1}^k \mu_{i,j}(u_j-f_j)e_i+\rho, \mu_{i,j}\in \mathbb Z(U\times T),\rho\in \mathbb ZT.\]
	Since $w=1$, then $\rho$ lies in the submodule $S$. By \cref{submoduble}, $\rho=0$. Also note that $\deg(\mu_{i,j}),\deg(\rho)<Dn,|\mu_{i,j}|,|\rho|<nD^n$. It follows from \cref{thelemma} that all module computations in the process cost exponentially many relations with respect to $n$. And it also cost at exponentially many relations to convert $\rho$ to 0. Therefore
	\[w=_{W_F} \prod_{i=1}^r \prod_{j=1}^k a_i^{\mu_{i,j}(u_j-f_j)},\]
	and the cost of converting is exponential with respect to $n$. And the area of the right hand side is bounded by $\sum_{i,j}|\mu_{i,j}|\leqslant rknD^n$. The upper bound is exponential. 
\end{proof}

\cref{embeddingWreathproduct} follows immediately from \cref{exponential}.

\subsection{Further Comments}
\label{examples2}

\cref{metabelianness} shows that the metabelianness costs at most exponential. By \cref{main3} and \cref{relativeConnection}, if we write $G$ as an extension of two abelian groups $A$ by $T$, the complexity of the membership problem of the $T$-module $A$ gives the lower bound of $\delta(n)$. It follows that to construct a finitely presented metabelian group with Dehn function bigger than exponential function, the only hope is to find a complicated membership problem of a submodule in a free module over a group ring of a free abelian group. Because it is simply impossible to get anything harder than exponential anywhere else.

The first obstacle for us is the existence of such a membership problem. There is already a lot of study of the polynomial ideal membership problem, which is the special case for the membership problem over modules. For example, Mayr and Meyer showed that the lower space bound of a general polynomial ideal membership problem is exponential \cite{mayr1982complexity}. Other results can be found several surveys, such as \cite{mayr1997some}, \cite{mayr2017complexity}. But it remains unknown whether there exists an integral coefficient polynomial ideal membership problem for which the time complexity is harder than exponential. 

The second obstacle comes from the finitely-presentedness. Recall that a finitely generated metabelian group is finitely presented if and only if the module structure is tame. Thus even if we manage to find a complicated enough membership problem for submodules, it may not give us a finitely presented metabelian group unless the module is tame. 

\medskip

\bibliography{bib}{}
\bibliographystyle{alpha}

\end{document}